\documentclass[notitlepage,11pt,reqno]{amsart}
\usepackage[foot]{amsaddr}
\usepackage{amssymb,nicefrac,bm,upgreek,mathtools,verbatim}
\usepackage[final]{hyperref}
\usepackage[mathscr]{eucal}
\usepackage{dsfont}
\usepackage[normalem]{ulem}
\usepackage{amsopn,esint}
\usepackage[T1]{fontenc}
\usepackage[shortlabels]{enumitem}
\usepackage[T1]{fontenc}
\usepackage[utf8]{inputenc}
\usepackage{apptools}
\usepackage{enumitem}
\AtAppendix{\counterwithin{theorem}{section}} 

\usepackage[margin=1in]{geometry}

\newcommand{\stkout}[1]{\ifmmode\text{\sout{\ensuremath{#1}}}\else\sout{#1}\fi}
\hypersetup{
  colorlinks=true,
  citecolor=black,
  linkcolor=black,
  urlcolor = blue,
  anchorcolor = blue,
  frenchlinks=false,
  pdfborder={0 0 0},
  naturalnames=false,
  hypertexnames=false,
  breaklinks}
\usepackage{cite}
\usepackage[capitalize]{cleveref}

\crefname{equation}{}{}
\newtheorem{theorem}{Theorem}[section]

\newtheorem{lemma}[theorem]{Lemma}
\newtheorem*{theorem*}{Theorem}

\theoremstyle{definition}
\newtheorem{definition}[theorem]{Definition}
\newtheorem{remark}[theorem]{Remark}
\newtheorem*{remark*}{Remark}

\numberwithin{equation}{section} 

\newcommand{\mcf}{\mathcal{F}}

\newcommand{\la}{\lambda}
\newcommand{\La}{\Lambda}
\newcommand{\fw}{{\mathfrak{w}}}
\newcommand{\fz}{{\mathfrak{z}}}

\newcommand{\RR}{\mathds{R}}

\newcommand{\NN}{\mathds{N}}

\DeclareMathOperator*{\osc}{osc}

\DeclareMathOperator*{\dist}{dist}

\DeclareMathOperator*{\supp}{supp}

\newcommand{ \mr }{ \RR }

\newcommand{\iintss}{{\int\hspace{-0.28cm}\int}}
\newcommand{ \miints }{{\iintss\hspace{-0.56cm} -\hspace{-0.15cm}-}}
\newcommand{\miint}[1]{{\miints_{\hspace{-0.13cm}#1}}}

\def\YYint#1#2#3{{\setbox0=\hbox{$#1{#2#3}{\iint}$}
		\vcenter{\hbox{$#2#3$}}\kern-.50\wd0}}

\def\Xint#1{\mathchoice
	{\XXint\displaystyle\textstyle{#1}}%
	{\XXint\textstyle\scriptstyle{#1}}%
	{\XXint\scriptstyle\scriptscriptstyle{#1}}%
	{\XXint\scriptscriptstyle\scriptscriptstyle{#1}}%
	\!\int}
\def\XXint#1#2#3{{\setbox0=\hbox{$#1{#2#3}{\int}$}
		\vcenter{\hbox{$#2#3$}}\kern-.50\wd0}}

\def\Xint#1{\mathchoice
	{\XXint\displaystyle\textstyle{#1}}%
	{\XXint\textstyle\scriptstyle{#1}}%
	{\XXint\scriptstyle\scriptscriptstyle{#1}}%
	{\XXint\scriptscriptstyle\scriptscriptstyle{#1}}%
	\!\int}
\def\XXint#1#2#3{{\setbox0=\hbox{$#1{#2#3}{\int}$}
		\vcenter{\hbox{$#2#3$}}\kern-.50\wd0}}

\def\dashint{\Xint-}

\usepackage{xcolor}

\allowdisplaybreaks

\newcommand{\ttl}{\MakeUppercase{Higher integrability for parabolic double phase equations with an improved gap bound}}
\begin{document}
\title[H\"older continuous solutions to parabolic double phase equations]
{\ttl}
\author{Abhrojyoti Sen$^1$} 
\address{$^1$Goethe-Universit\"{a}t Frankfurt, Institut f\"{u}r Mathematik, Robert-Mayer-Str. 10,
 D-60629 Frankfurt, Germany. Email: sen@math.uni-frankfurt.de}
 
\author{Jarkko Siltakoski$^2$}
\address{$^2$Department of Mathematics and Statistics, University of Jyv\"askyl\"a, PO Box 35, 40014 Jyvaskyla, Finland. Email address: jarkko.siltakoski@jyu.fi}

\begin{abstract}
We prove a local higher integrability result for the gradient of H\"older
continuous weak solutions to the parabolic double phase equation
\[
\partial_t u - \operatorname{div} \left(|Du|^{p-2}Du + a(z)|Du|^{q-2}Du\right)
  = 0 \qquad \text{in } \Omega_T.
\]
We work under a relaxed gap condition on the exponents $p$ and $q$. The
coefficient $a$ is assumed to belong to the class $\mathcal{Z}^{\kappa}(\Omega_T)$ for some $\kappa \in (0,\infty)$. The functions in this class satisfy a one-sided
pointwise bound that controls how fast $a$ can grow away from its zero set,
and the class contains the H\"older continuous functions. We also impose a \emph{mild almost increasing} condition on $a$, which motivates the introduction of a new mollification, which we call the \emph{slanted Steklov average}. For $u \in C^{0,\gamma,\gamma/q}_{\mathrm{loc}}(\Omega_T)$ with $\gamma \in [0,1)$,
our main result holds under the gap bound
\begin{equation}\tag{G}\label{eq:G}
  2 \le p \le q \le p + \frac{q\kappa}{q - 2\gamma}.
\end{equation}
The new gap condition \eqref{eq:G} is purely parabolic in nature and is
stricter than the optimal gap relation associated with the Lavrentiev
phenomenon for the elliptic double phase functional.
\end{abstract}
\keywords{parabolic double phase, intrinsic geometry, improved gap condition, H\"{o}lder continuous solutions, gradient higher integrability}
\subjclass[2020]{Primary: 35B65, 35D30, 35K40, 35K55, 35K65, 35K92.}

\maketitle
\tableofcontents
\section{Introduction and main result}\label{intro}
\subsection{Overview}\label{subsec:overview}
We consider the model equation
\begin{equation}\label{eq:main_double}
  \partial_t u - \operatorname{div}\!\bigl(|Du|^{p-2}Du + a(z)|Du|^{q-2}Du\bigr)
  = 0 \qquad \text{in } \Omega_T,
\end{equation}
and, in greater generality, the equation
\begin{equation}\label{eq:main equation}
  \partial_t u - \operatorname{div} A(z, u, Du) = 0 \qquad \text{in } \Omega_T,
\end{equation}
where $\Omega_T := \Omega \times (0, T]$ with $\Omega \subset \RR^N$ a
bounded open set, $N\ge 2$. The Carath\'eodory vector field
$A : \Omega_T \times \RR \times \RR^N \to \RR^N$ is
assumed to satisfy the structural conditions
\begin{equation}\label{eq:growth_cond}
  \nu\, H(z, |\xi|) \le A(z, v, \xi) \cdot \xi,
  \qquad
  |A(z, v, \xi)|\,|\xi| \le L\, H(z, |\xi|),
\end{equation}
for all $z \in \Omega_T$, $v \in \RR$, $\xi \in \RR^N$, with
constants $0 < \nu \le L$. Here, $H : \Omega_T \times \RR_+ \to
\RR_+$ is the parabolic double phase integrand
\[
  H(z, \zeta) = \zeta^p + a(z)\zeta^q,
  \qquad z \in \Omega_T,\quad \zeta \in \RR_+,
\]
where $1 < p \le q$ and the modulating coefficient $a$ is a non-negative
function on $\Omega_T$ whose precise regularity will be specified below.
The choice $A(z, v, \xi) = |\xi|^{p-2}\xi + a(z)|\xi|^{q-2}\xi$
recovers~\eqref{eq:main_double}, the canonical parabolic double phase equation.

The elliptic ancestor of~\eqref{eq:main_double} was introduced by
Zhikov~\cite{Zhikov1986, Zhikov1993, Zhikov1995, Zhikov1997} as a model for
strongly anisotropic composite materials, in which the ellipticity changes
from $p$-growth on the set $\{a = 0\}$ to $(p, q)$-growth elsewhere. The
same model is also one of the standard examples in which the Lavrentiev
phenomenon occurs~\cite{Esposito2004, Fonseca2004, Zhikov1993, Zhikov1995} and has,
for over a decade, been a focal point of the regularity theory of
non-uniformly elliptic problems. In the elliptic setting,
$C^{0, \alpha}$- and $C^{1, \alpha}$-estimates, Calder\'on--Zygmund
estimates, and Harnack-type inequalities have been established
in~\cite{Baroni2015, Colombo2015, Colombo2015a, Colombo2016, DeFilippis2019,
Baasandorj2020, Byun2020, Byun2017}, among many other contributions. A
parabolic counterpart has only recently begun to develop: gradient higher integrability is treated
in~\cite{2023_Gradient_Higher_Integrability_for_Degenerate_Parabolic_Double-Phase_Systems,
KS24, S25, KO2025, kim2026interpolativerefinementgapbound, oh2026gradienthigherintegrabilitydegenerate, arora2026irregulardoublephaseevolution}, Calder\'on--Zygmund estimates in~\cite{kim2024calderonzygmundtypeestimateparabolic,
kim2025calderonzygmundtypeestimatesingular}, existence and uniqueness of
weak solutions in~\cite{ST16, KimLipschitz25}, and H\"older continuity
in~\cite{Kim2025holder}.

A central question in the double phase setting is how the regularity of solutions is related to
the interplay between the gap condition of $p, q$ and the regularity of the
modulating coefficient $a$. Until now, the literature has assumed $a \in C^{\alpha,
\alpha/2}(\Omega_T)$ for some $\alpha \in (0, 1]$. We instead work with a
strictly larger class of coefficients, originally identified
in~\cite{BCFM2023} in the elliptic setting and which we follow in that paper
by calling $\mathcal{Z}^{\kappa}$.

\begin{definition}\label{def:Zkappa}
For $\kappa \in (0, \infty)$, a function $a : \Omega_T \to [0, \infty)$
belongs to the class $\mathcal{Z}^{\kappa}(\Omega_T)$ if there exists a
constant $c_a \ge 1$ such that
\begin{equation}\label{eq:Z_kappa}
  a(z_1) \le c_a\bigl(a(z_2) + d_p(z_1,z_2)^{\kappa}\bigr)
  \qquad \text{for all } z_1, z_2 \in \Omega_T.
\end{equation}
where $d_p$ denotes the parabolic metric given by Definition~\ref{parabolic metric}.
\end{definition}

In the elliptic setting, $\mathcal{Z}^{\kappa}$ was introduced
in~\cite{BCFM2023} in connection with the Lavrentiev phenomenon, and
closely related conditions had previously appeared in the regularity theory
of non-autonomous elliptic problems~\cite{HHL_2021, HOK2021, HOK2022}. To the best of
our knowledge, the present paper is the first to use this class in the
parabolic context. As a guiding example, the function $x \mapsto |x|^{\kappa}$
belongs to $\mathcal{Z}^{\kappa}(\RR^N)$ for every $\kappa \in (0,
\infty)$, illustrating that $\mathcal{Z}^{\kappa}$ accommodates
non-H\"older behaviour of the coefficients and $\kappa$
precisely measures the rate at which the coefficient is allowed to vanish on its zero set $\{a=0\}$, a rate that may be taken arbitrarily large.

Throughout this paper, $a$ is assumed to lie in
$\mathcal{Z}^{\kappa}(\Omega_T)$ for some $\kappa \in (0, \infty)$, and our main result is established under the gap bound
\begin{equation}\label{eq:new_upper_bound}
  2 \le p \le q \le p + \frac{q\kappa}{q - 2\gamma},
\end{equation}
where $\gamma \in [0, 1)$ is the H\"older exponent of the solution $u$ and
$\kappa$ is the constant in~\eqref{eq:Z_kappa}.

We also assume that the coefficient $a$ satisfies a \textit{mild monotonicity} condition, i.e.,there exist $\zeta \in \RR^N$ and a positive constant $C_a$, both independent of $(x,t)$, such that either 
\begin{equation}\label{eq:a decreasing}
    a(x,t) \leq C_a a(x+(t-s)\zeta,s) \quad \text{for all } (x,t),(x,s)\in \Omega_T \quad \text{with }\,\, s \leq t.
\end{equation}
\text{or}
\begin{equation}\label{eq:a increasing}
    a(x,t) \leq C_a a(x+(s-t)\zeta,s) \quad \text{for all } (x,t),(x,s)\in \Omega_T \quad \text{with }\,\, s \geq t.
\end{equation}
This assumption is a generalization of the assumptions considered in \cite{SS25}. As observed in \cite{SS25}, the assumptions are not optimal in the sense that one can find functions such as $a(x,t)=\max\{-(x+t+1), 0\},$ that do not satisfy the assumption on a cylinder centered at a point $(x,t)$ that lies on the line $x=-(t+1).$ Such functions are now covered by assumptions \eqref{eq:a decreasing}-\eqref{eq:a increasing}.
 \subsection{Main result} 
To state the main theorem, we first recall the notion of a weak solution and
introduce the solution space associated with~\eqref{eq:main equation}:
\begin{equation*}
  W^H(\Omega_T) := \biggl\{ u \in L^1((0,T), W^{1,1}(\Omega,\RR) ):
  \iint_{\Omega_T} \bigl(|Du|^p + a(z)|Du|^q\bigr) \, dz < \infty \biggr\}.
\end{equation*}
We also denote the parabolic cylinders by
\[
  Q_r(z_0) := B_r(x_0) \times (t_0 - r^2,\, t_0 + r^2).
\]

\begin{definition}[Weak solution]\label{def:weak_solution}
A function $u : \Omega \times (0, T] \to \RR$ with
$u \in C_{\mathrm{loc}}\!\bigl((0, T];\, L^2_{\mathrm{loc}}(\Omega, \RR)\bigr)
\cap W^H(\Omega_T)$ is a weak solution of \eqref{eq:main equation} if
\[
  \iint_{\Omega_T} \bigl(- u\, \partial_t \varphi + A(z, u, Du) \cdot D\varphi\bigr) \, dz = 0
\]
holds for every nonnegative test function $\varphi \in C^{\infty}_0(\Omega_T, \RR)$.
\end{definition}
Our main result of this paper is the following.
\begin{theorem}[Gradient higher integrability]\label{main theorem}
Let $u$ be a weak solution of \eqref{eq:main equation} that is locally $\gamma$-H\"older continuous in space and $\frac{\gamma}{q}$-H\"older continuous in time. Also, assume that $a$ satisfies \eqref{eq:Z_kappa} and \eqref{eq:a increasing}. Then there exist constants $\varepsilon_0 = \varepsilon_0(\texttt{data}) > 0$
and $c = c(\texttt{data}) \ge 1$ such that
\[
  \miint{Q_r(z_0)} H(z, |Du|)^{1+\varepsilon} \, dz
  \le c \left( \frac{2 + \|a\|_\infty}{r^{q - 2\gamma}} \right)^{\! 1 + \frac{\varepsilon q}{2}}
\]
for every $\varepsilon \in (0, \varepsilon_0)$, whenever $Q_{3r}(z_0) \subset \Omega_T$.
\end{theorem}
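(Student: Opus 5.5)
The plan is the intrinsic-geometry scheme of Kim--Kinnunen--Moring for parabolic double phase systems. The changes are: $C^{\alpha,\alpha/2}$ continuity of $a$ is replaced by \eqref{eq:Z_kappa}, and the H\"older continuity of $u$ is used to absorb the gap. Steklov averages are replaced by slanted ones adapted to \eqref{eq:a increasing}.

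\textbf{Caccioppoli inequality.} I first derive an energy estimate on general intrinsic cylinders
\[
Q_{\rho,s}(z_0)=B_\rho(x_0)\times(t_0-s,t_0+s),
\]
testing with $(u-u_{\rho,s})\eta^q\psi$. The time derivative is handled through the slanted Steklov average: one averages along the direction $(\zeta,1)$, so that \eqref{eq:a increasing} makes $a$ almost monotone along the averaging lines. The term $a(z)|Du|^{q-2}Du\cdot D\varphi$ can then be passed to the limit with a uniform bound $C_a$. The output is the estimate
\[
\sup_t \dashint \frac{|u-u_Q|^2}{s}+\miint{} H(z,|Du|)\le c\miint{}\Bigl(\frac{|u-u_Q|^p}{\rho^p}+a\frac{|u-u_Q|^q}{\rho^q}\Bigr)+c\miint{}\frac{|u-u_Q|^2}{s},
\]
together with a parabolic Poincar\'e inequality obtained from the weak formulation.

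\textbf{Stopping-time construction.} Fix $\lambda$ large relative to $\lambda_0^{1+?}$ with $\lambda_0^{\,\cdot}\sim (2+\|a\|_\infty)/r^{q-2\gamma}$; the exponent $1+\varepsilon q/2$ in the statement comes from the scaling $\lambda^{2}$ versus $\lambda^{q}$. For a.e.\ point of the upper level set $\{H(z,|Du|)>\Lambda\}$ with $\Lambda=\lambda^p+a(z)\lambda^q$, I build a stopping radius $\rho$ and distinguish two phases.
\begin{itemize}
\item \emph{$p$-phase} ($K\lambda^p\ge a(z_0)\lambda^q$): use the $p$-intrinsic cylinders $Q_\rho^\lambda=B_\rho\times(t_0-\lambda^{2-p}\rho^2,\,t_0+\lambda^{2-p}\rho^2)$.
\item \emph{$(p,q)$-phase}: use the cylinders with time scale $\lambda^2\rho^2/\Lambda$.
\end{itemize}
The key comparison is $\sup_{Q}a\le c\,a(z_0)$ in the $(p,q)$-phase. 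Here, instead of H\"older continuity, I use \eqref{eq:Z_kappa}: if $a(z_0)\lambda^q\ge K\lambda^p$, then $\rho^\kappa\lambda^{q-p}\lesssim 1$ must be shown. This is where the gap enters. By H\"older continuity, $|u-u_Q|\le c\rho^\gamma$ on cylinders of time length $\lesssim\rho^{q/\gamma\cdot\gamma}$. Combined with the stopping condition $\lambda^p\lesssim\miint{}H(|Du|)$, Caccioppoli then gives
\[
\lambda^p\lesssim \rho^{\gamma p-p}+a\rho^{\gamma q-q}+\rho^{2\gamma}/s .
\]
With $s\sim\lambda^{2-p}\rho^2$ the last term yields $\lambda^2\lesssim\rho^{2\gamma-2}$, i.e.\ $\rho\lesssim\lambda^{-1/(1-\gamma)}$; more carefully, using the $q$-scaling one gets $\rho^{q-2\gamma}\lambda^{q}\lesssim 2+\|a\|_\infty$. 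Then
\[
\rho^\kappa\lambda^{q-p}\le \lambda^{q-p-\kappa q/(q-2\gamma)}(2+\|a\|_\infty)^{\cdot}\lesssim 1
\]
precisely when $q-p\le q\kappa/(q-2\gamma)$, which is \eqref{eq:new_upper_bound}. The condition $p\ge2$ is needed so that the intrinsic time scales are shorter than $\rho^2$, keeping the cylinders inside $Q_{2r}$.

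\textbf{Reverse H\"older and conclusion.} In each phase, Caccioppoli, Poincar\'e (Gagliardo--Nirenberg to control the $L^2$-in-time term via the sup bound) and the phase comparison give
\[
\miint{Q}H(z,|Du|)\le c\Bigl(\miint{2Q}H(z,|Du|)^{\theta}\Bigr)^{1/\theta}+c\,\miint{2Q}\ldots,\qquad \theta<1 .
\]
A Vitali covering of the level set by the stopping cylinders gives the estimate
\[
\iint_{\{H>\Lambda\}}H\lesssim \Lambda^{1-\theta}\iint_{\{H>\Lambda/c\}}H^\theta .
\]
Multiplying by $\Lambda^{\varepsilon-1}$ and integrating in $\Lambda$ (Fubini) yields the claim for small $\varepsilon$. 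The main obstacle is the phase-comparison step: $\mathcal Z^\kappa$ is only one-sided, so in the $p$-phase one must also show $a\lambda^q\lesssim\lambda^p$ on the whole cylinder. Here one uses \eqref{eq:Z_kappa} in the direction $a(z)\le c_a(a(z_0)+\rho^\kappa)$, which suffices, again by the radius bound above.
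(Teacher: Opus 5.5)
\textbf{Gap 1: the radius bound that produces the new gap is not established.} Your architecture is the paper's: slanted-Steklov Caccioppoli, two intrinsic phases, a $\lambda$--$\rho$ relation from the H\"older continuity of $u$, reverse H\"older, Vitali, Fubini. The failure is in the step you yourself call the crux.

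First, the oscillation bound is wrong. An intrinsic cylinder has time length $\lambda^{2-p}\rho^2\le\rho^2$ (resp.\ $\lambda^2\rho^2/\Lambda\le\rho^2$). The time exponent $\gamma/q$ therefore only gives $\operatorname{osc}u\lesssim\rho^{2\gamma/q}$. So $\rho\lesssim\lambda^{-1/(1-\gamma)}$ does not follow, and the correct target is $\lambda^q\rho^{q-2\gamma}\lesssim 1$.

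Second, and more seriously, your Caccioppoli bound does not deliver that target. Suppose the term $a(z)|u-(u)_Q|^q/\rho^q$ is estimated with $\|a\|_\infty$, as your constant $2+\|a\|_\infty$ suggests. Then you only get $\lambda^p\rho^{q-2\gamma}\lesssim\|a\|_\infty$, hence $\rho^\kappa\lesssim\lambda^{-p\kappa/(q-2\gamma)}$. Getting $\rho^\kappa\lambda^{q-p}\lesssim 1$ from this needs $q-p\le p\kappa/(q-2\gamma)$, a smaller admissible gap than \eqref{eq:new_upper_bound}.

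Worse, consider your criterion $K\lambda^p$ versus $a(z_0)\lambda^q$ in the regime $a(z_0)\lambda^q\ge K\lambda^p$. There you derive the relation on cylinders normalized by $\lambda^p$, with $s\sim\lambda^{2-p}\rho^2$. The contribution $a(z_0)\rho^{-(q-2\gamma)}$ then cannot be absorbed at all.

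The missing idea is to control the $a$-term through \eqref{eq:Z_kappa} and the gap. On the cylinder, $a(z)\le c_a(\inf a+\rho^\kappa)$. For $\rho<1$, $\rho^{\kappa-(q-2\gamma)}\le\rho^{-p(q-2\gamma)/q}$ holds precisely when $q-p\le q\kappa/(q-2\gamma)$.

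The paper builds this into the phase criterion. The $p$-phase is $\inf_{Q_{\rho,\lambda}}a\le\widetilde M\rho^\kappa$, so $a\le 2c_a\widetilde M\rho^\kappa$ there and Lemma~\ref{lem: lambda rho relation} follows directly. In the complementary case $\inf a\ge\widetilde M\rho^\kappa$, \eqref{eq:Z_kappa} gives $a(z_0)/(2c_a)\le a\le 2c_a a(z_0)$ with no radius bound at all. The inequality $K^2\lambda^p\ge\inf a\,\lambda^q$ in the $p$-phase is then a consequence (Remark~\ref{REM3.3}).

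Your criterion can be repaired in two pieces:
\begin{enumerate}
\item In the $p$-phase, absorb the $a(z_0)$-part via $a(z_0)\le K\lambda^{p-q}$, by a contradiction argument assuming $\lambda^q\rho^{q-2\gamma}\ge M$ with $M$ large.
\item In the other phase, derive the radius bound at the $G$-stopping radius, where the left side is $\Lambda=\lambda^p+a(z_0)\lambda^q$.
\end{enumerate}
As written, though, the step producing the new gap is not established.

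\textbf{Gap 2: the spatial regularization is missing.} The slanted Steklov average smooths only in time. To insert $[\eta^q(u-(u)_Q)]^R_h$ into the weak formulation you must also mollify in $x$ and prove $\iint a\,|D(\cdot)_\delta-D(\cdot)|^q\to0$. The formulation admits only $C_0^\infty$ test functions, and no a priori $u\in L^q(W^{1,q})$ is available.

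This is a Lavrentiev-type issue, and it is where the paper (Lemma~\ref{lem: spcae approximation}) uses the spatial H\"older continuity of $u$. One has $|Du_\delta|\lesssim\delta^{\gamma-1}$ and $a(x,t)\le c_a(a(x_t^*,t)+\delta^\kappa)$, where $x_t^*$ minimizes $a(\cdot,t)$ on $\overline B_\delta(x)$. Together these give $|Du_\delta|^p+a|Du_\delta|^q\le C(|Du|^p+a|Du|^q)_\delta$ as soon as $\delta^{\kappa+(\gamma-1)(q-p)}\lesssim 1$. That is, $q-p\le\kappa/(1-\gamma)$, which follows from \eqref{eq:new_upper_bound} because $q\ge2$. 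Without this step the energy estimate, on which everything else rests, is not justified.
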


We close the introduction with several remarks on this result.

\begin{remark}\label{rem:comments}
\textup{(i)} The new upper bound~\eqref{eq:new_upper_bound} reflects the time
H\"older regularity of solutions to~\eqref{eq:main_double} and differs from the
classical elliptic Lavrentiev gap bound for H\"older continuous solutions. In
fact, for $p \ge 2$ it is worse than the elliptic one in the sense that
\[
  \frac{q\kappa}{q - 2\gamma} \le \frac{\kappa}{1 - \gamma},
\]
where $\gamma$ denotes the H\"older exponent of $u$. We do not know whether
this new upper bound is sharp. In the elliptic case, the upper bound
$q-p \le \kappa/(1 - \gamma)$ is known to be sharp. In the parabolic setting,
the phenomenon is governed entirely by the time regularity: in particular, if
the time regularity could be improved from $\gamma/q$ to $\gamma/2$, the same
gap condition as in the elliptic case would be recovered. Note also that
$q\kappa/(q - 2\gamma)$ blows up when $\gamma \to 1$ and $q\to 2$.

\textup{(ii)} Since our higher integrability proof broadly follows the
framework of~\cite{2023_Gradient_Higher_Integrability_for_Degenerate_Parabolic_Double-Phase_Systems, S25}, it carries over to the singular
regime and to systems. Nevertheless, we restrict our attention to scalar
equations, as H\"older regularity in the singular case and for systems has
not yet been established. For the same reason, the main statements concern
the homogeneous problem. That said, our approach accommodates natural
nonhomogeneous sources of the form
\begin{equation*}
  \partial_tu - \operatorname{div}\left(|Du|^{p-2}Du + a(z)|Du|^{q-2}Du\right)
  = -\operatorname{div}\left(|F|^{p-2}F + a(z)|F|^{q-2}F\right),
\end{equation*}
for a given vector field $F : \Omega_T \to \RR$ satisfying
\[
  \iint_{\Omega_T} H(z, |F|)\, dz
  = \iint_{\Omega_T} \bigl(|F|^p + a(z)|F|^q\bigr)\, dz < \infty.
\]
To obtain higher integrability in the presence of such a source, an integrability assumption on $H(z, |F|)$ stronger than $L^1(\Omega_T)$, which is typically required in Lemma~\ref{lem: lambda rho relation} to control the growth of $H(z, |F|)$; see~\cite{CGK2025}.

Although H\"older regularity for the parabolic double phase equation is known
in the homogeneous case, an Ishii--Lions type argument can be adapted to
handle the analogous regularity for the equation with a nonhomogeneous term,
provided that $a$ is Lipschitz in space so that the viscosity framework is well
posed. In the absence of such Lipschitz continuity of $a$, no viscosity
formulation is currently available. Consequently, while our higher
integrability theory extends to systems and to the singular regime, the
existing H\"older regularity statements are confined to scalar, homogeneous
equations (with nonhomogeneous extensions available only under the additional
Lipschitz continuity of $a$).

\textup{(iii)} A key distinction from the recent works~\cite{CGK2025, KO2025}
is that these papers assume
\begin{equation}\label{eq:Lq_assumption}
  u \in L^q\bigl(0, T;\, W^{1,q}(\Omega)\bigr)
\end{equation}
as an {\em a priori} requirement, which is not guaranteed in this setting. In
particular, the standard Lipschitz truncation method is not applicable here:
deriving an energy estimate via Lipschitz truncation requires a prescribed
relation between the scaling parameter $\lambda$ and the radius of the
parabolic cylinder, while in our approach this very relation is itself
extracted from the energy estimate. Starting from a truncation would
therefore lead to a circular dependence.

In place of~\eqref{eq:Lq_assumption}, we employ the slanted Steklov average as a time mollification; see Section \ref{subsec:slanting} for a precise definition.

\textup{(iv)} We work with $a \in \mathcal{Z}^{\kappa}(\Omega_T)$ for
$\kappa \in (0, \infty)$, which is a strictly larger class than
$C^{\alpha, \alpha/2}(\Omega_T)$ with $\alpha \in (0, 1]$. This class was
introduced in~\cite{BCFM2023} in the study of the Lavrentiev phenomenon for
the elliptic double phase functional. To our knowledge, the present paper is
the first to use this class to investigate the regularity theory of parabolic
double phase equations. Previously, the class was used in the regularity
theory of non-autonomous elliptic problems~\cite{HHL_2021, HOK2021, HOK2022}.
Note that for every $\kappa \in (0, \infty)$, the function
$x \mapsto |x|^{\kappa}$ belongs to $\mathcal Z^{\kappa}(\RR^N)$.

To give some insight into this class, we recall that, for
$\Omega \subset \RR^N$, every function in $\mathcal Z^{\kappa}(\Omega)$ with
$\kappa \in (0, 1]$ is comparable to a H\"older continuous function, and
$C^{1+\kappa}(\Omega) \subset \mathcal Z^{1+\kappa}(\Omega)$. Moreover, a function $a$
belongs to $\mathcal Z^{\kappa}$ for some $\kappa \in (0, \infty)$ if and only if there
exists $\tilde{a}$ comparable to $a$ such that $\tilde{a}^{1/\kappa}$ is
Lipschitz. However, as observed in~\cite[Proposition~1.3]{BCFM2023}, the
class $\mathcal Z^{\kappa}$ does not share many features with smoothness, in general.
Indeed, the function $t \mapsto t^2$ is $C^{\infty}$ but does not belong to
$\mathcal Z^{2 + \delta}$ for any $\delta > 0$.
\end{remark}

 \section{Preliminaries}
 \subsection{Notations and definitions}\label{notations and definitions} Here we collect the standard notation and definitions  that will be used throughout the paper:
\begin{enumerate}[label=(\roman*),series=theoremconditions]
\item \label{not1} We denote by $N$ the space dimension and by $z=(x,t)$ a point in $ \RR^N\times (0,T]$. We will always assume that $N \geq 2$.   
\item\label{not2} A ball with center $x_0 \in \RR^N$ and radius $r$ is denoted as
 \begin{align*}
     B_r(x_0)=\left\{x \in \RR^N : |x-x_0|<r\right\}.
 \end{align*}
\item\label{general cylinder} In general, a parabolic cylinder centered at $z_0=(x_0, t_0)$ is denoted as 
\begin{align*}
    U_{r, \varrho}(z_0):=B_r(x_0)\times \left(t_0-\varrho, t_0+\varrho\right)=:B_r(x_0)\times l_{\varrho}(t_0).
\end{align*}

\item\label{not5} Let $\lambda\geq 1.$ We shall use the following three types of cylinders for the $p$-,and $(p, q)$-phases. 

\noindent The notation $Q_{\varrho, \lambda}(z_0)$ is used to denote $p$-intrinsic cylinders at $z_0=(x_0, t_0),$
\begin{align*}
Q_{\varrho, \lambda}(z_0):=B_{\varrho}(x_0)\times \left(t_0-\lambda^{2-p}\varrho^2, t_0+\lambda^{2-p}\varrho^2\right)
=:B_{\varrho}(x_0)\times I_{\varrho, \lambda}(t_0).
\end{align*}
The notation $G_{\varrho, \lambda}(z_0)$ is used to denote $(p,q)$-intrinsic cylinders at $z_0=(x_0, t_0),$
\begin{align*}
G_{\varrho, \lambda}(z_0):=B_{\varrho}(x_0)\times \left(t_0-\frac{\lambda^{2}\varrho^2}{\Lambda}, t_0+\frac{\lambda^{2}\varrho^2}{\Lambda}\right)
=:B_{\varrho}(x_0)\times J_{\varrho, \lambda}(t_0),
\end{align*}
where $\Lambda:=\lambda^p+a(z_0)\lambda^q.$
\item \label{not5.5} In the case $\la =1,$ we denote 
\[
Q_\varrho(z_0) := B_{\varrho}(x_0) \times (t_0 - \varrho^2,t_0+\varrho^2) =: B_\varrho(x_0) \times I_\varrho(t_0).
\]

\item\label{not7} Integration with respect to either space or time only will be denoted by a single integral $\int$ whereas integration on $\Omega\times (0, T)$ will be denoted by a double integral $\iint$. 
	
	\item\label{not9} The notation $a \lesssim b$ is shorthand for $a\leq c b$ where $c$ is a universal constant that depends on $``\text{\texttt{data}}"$ where 
 \begin{align*}
\text{\texttt{\texttt{data}}}:=\left(N, \widetilde M, p, q, \nu, L, [u]_{\gamma. \gamma/q}, \kappa, c_a, ||a||_{L^{\infty}(\Omega_T)}, ||H(z, |D u|)||_{L^1(\Omega_T)}\right).
 \end{align*}

 \item \label{integral avarage} We shall denote
 \begin{align*}
     (f)_{\mathcal{D}}:= \frac{1}{|\mathcal{D}|}\iint_{\mathcal{D}} f(z)\, dz= \miint{\mathcal{D}} f(z)\, dz
 \end{align*}
 as the integral average of $f$ over a measurable set $\mathcal{D} \subset \Omega_T$ with $0<|\mathcal{D}|<\infty.$ 

 \end{enumerate}
 We also introduce the definition of the parabolic metric and its scaled version.
\begin{definition}[Parabolic metric]\label{parabolic metric}
Given any two points $z_1=(x_1, t_1)$ and $z_2=(x_2, t_2)$ in $\RR^{N+1},$ we define the parabolic metric $d_p$ as 
\begin{align*}
d_p(z_1, z_2):=\max\left\{|x_1-x_2|, \sqrt{|t_1-t_2|}\right\}.
\end{align*}
\end{definition}
A suitable adaptation of the above definition in intrinsically scaled cylinders centered at $z_0\in \RR^{N+1}$ is as follows.
\begin{definition}[Scaled parabolic metric]\label{para metric}
Fix $z_0 \in \RR^{N+1}.$ Given any two points $z_1=(x_1, t_1), z_2=(x_2, t_2) \in \RR^{N+1}$ and $\lambda\geq 1$, we define the parabolic metric $d^{\la}_{p}(\cdot, \cdot)$ as
\begin{align*}
\displaystyle
d^{\la}_{p}(z_1,z_2):=\left\{\begin{array}{l}
\max\left\{|x_1-x_2|, \sqrt{\lambda^{p-2}|t_1-t_2|}\right\}, \,\,\,\,\text{in $p$-intrinsic case,}\\
\max\left\{|x_1-x_2|, \sqrt{\Lambda \lambda^{-2}|t_1-t_2|}\right\},\,\,\,\text{in $(p,q)$-intrinsic case,}
\end{array}\right.
\end{align*}
where $\La:=\la^p+a(z_0)\la^q.$   
\end{definition}
\subsection{Slanted Steklov average}\label{subsec:slanting}
In this section, we introduce the idea of a slanted Steklov average. For any $\zeta \in \RR^N,$ we define \textit{right slanted Steklov average} and \textit{left slanted Steklov average} respectively. 
\begin{align}\label{right slanted steklov}
[u]_{h}^{R}(z):=\frac{1}{h}\int_{t}^{t+h}u(x+(s-t)\zeta,s)\, ds,
\end{align}
and 
\begin{align}\label{left slanted steklov}
 [u]_{h}^{L}(z):=\frac{1}{h}\int_{t-h}^{t}u(x+(t-s)\zeta,s)\, ds.
\end{align}
As can be seen from the definition, the name ``slanted'' suggests a shift to the spatial direction. This makes the definition different from the usual Steklov average as the usual definition only averages in the time direction. We note some advantages and restrictions for introducing such mollification in the following.

(i) First, we note that, similar to the standard spatial mollification, one can also use a space-time mollification. This was first explicitly written in \cite{OSS2025} and later used in \cite{HOK25, kimkimoh2026}; 
it avoids the assumptions \eqref{eq:a decreasing}--\eqref{eq:a increasing} on the coefficient, 
and it is enough to assume $a\in \mathcal{Z}^{\kappa}(\Omega_T)$. The precise definition of this space-time mollification is as follows. Let $\widetilde{\Theta}_M \in C^\infty(\RR)$ be the 
standard mollifier
\[
  \widetilde{\Theta}_M(s) :=
  \begin{cases}
    c_M\, \exp\!\left(\dfrac{1}{s^{2} - 1}\right) & \text{if } |s| < 1,\\[2mm]
    0 & \text{if } |s| \ge 1,
  \end{cases}
\]
where $c_M > 0$ is chosen so that
$\int_{\RR^M} \widetilde{\Theta}_M(|x|)\, dx = 1$. We define
\[
  \Theta(z) = \Theta(x, t)
        := \widetilde{\Theta}_N(|x|)\, \widetilde{\Theta}_1(t),
  \qquad
  \Theta_h(x, t)
        := \frac{1}{h^{N+2}}\, \Theta\!\left(\frac{x}{h},\, \frac{t}{h^{2}}\right)
  \quad \text{for } h > 0.
\]
Then $\Theta_h \in C^\infty(\RR^{N+1})$, $\Theta_h \ge 0$,
$\|\Theta_h\|_{L^1(\RR^{N+1})} = 1$, and
$\mathrm{supp}(\Theta_h) \subset Q_h(0)$. Moreover, with constants
depending only on $N$, we have
\[
  0 \le \Theta_h \le c\, h^{-(N+2)},
  \qquad
  |D\Theta_h| \le c\, h^{-(N+3)},
  \qquad
  |\partial_t \Theta_h| \le c\, h^{-(N+4)}.
\]
For a bounded open set $U \subset \RR^{N+1}$ and $f \in L^1(U)$, the
parabolic mollification of $f$ is
\[
  [f]^{h}(z)
  := (f * \Theta_h)(z)
  = \iint_{\RR^{N+1}} f(z - \sigma)\, \Theta_h(\sigma)\, d\sigma
  = \iint_{Q_h(0)} f(z - \sigma)\, \Theta_h(\sigma)\, d\sigma,
\]
defined whenever $Q_h(z) \subset U$. Under this mollification, as 
$h \to 0$, we would have
\begin{align*}
    &\iint_{U_{R, S}(z_0)}|[f]^h-f|^p+a(z)|[f]^h-f|^q\, dz\to 0,\\
    &\iint_{U_{R, S}(z_0)}|D[f]^h-Df|^p+a(z)|D[f]^h-Df|^q\, dz\to 0, \\
    &\iint_{U_{R, S}(z_0)}\left[|Df|^{p-2}Df+a(z)|Df|^{q-2}Df\right]^h\cdot D[f]^h\,dz\to \iint_{U_{R, S}(z_0)}|Df|^p+a(z)|Df|^q\,dz.
\end{align*}
Notice that the most crucial convergence is the third one. Once we 
have these convergences, we can prove 
Lemma~\ref{lem: energy inequality} and Lemma~\ref{Caccio 1}, and 
completely avoid assumptions  \eqref{eq:a decreasing}--\eqref{eq:a increasing} in our main result. However, to use this mollification, we need the  $(\kappa, \frac{\kappa}{2} )$-regularity in space and time of the coefficient $a$ with  $\kappa \in (0, 1].$ Since, in our case, the exponent $\kappa$ can be greater than $1$ the above convergences are not entirely clear with the gap bound \eqref{eq:new_upper_bound} and we need to seek a different route than the usual one.

(ii) Another important motivation for our mollification is to understand the optimal time regularity of the coefficient $a$. Our strategy is to use separate approximation lemmas in space and in time; see Lemma~\ref{lem: spcae approximation} and Lemma~\ref{lem:steklov lemma}. In this way, we do not need H\"older continuity of $a$ in time; continuity in time is enough. In place of this, we assume the almost monotonicity \eqref{eq:a decreasing}--\eqref{eq:a increasing}, which essentially matters only on the small cylinders that meet the zero set of $a$. When $a \geq \delta > 0$, assumptions \eqref{eq:a decreasing}--\eqref{eq:a increasing} are trivially satisfied. The novelty of our approach is that the time regularity part of \eqref{eq:Z_kappa} is not used in the approximation lemmas and hence not in the energy estimates; only the spatial regularity of $a$ is used at this stage. For parabolic double phase problems, approximation is a crucial step, and since $a$ can vanish in some set, to perform the approximation in a correct way, one needs some regularity of $a$ in time. Our monotonicity assumption on $a$ is a direct, pointwise condition in time, used as such in the time approximation, whereas H\"older regularity in time enters only indirectly, through the density theorem in generalized Orlicz spaces. However, we use the $\frac{\kappa}{2}$-regularity in time of $a$ as in \eqref{eq:Z_kappa} in the other parts of the paper.

 \subsection{Approximation lemmas and energy estimate} This section begins with the time and space approximation lemmas, where we use the slanted Steklov average, and those lemmas are used to prove the Caccioppoli estimate.
 \begin{lemma}[Space approximation]\label{lem: spcae approximation}
Let $2\leq p\leq q\leq p+\frac{q\kappa}{q-2\gamma}$ and $R,S>0$. Moreover, suppose
that 
\[a(x_1, t)\leq c_a \left(a(x_2, t)+|x_1-x_2|^{\kappa}\right)\quad \text{for}\quad (x_1, t), (x_2, t)\in \Omega\times \{t\},\] and 
$u\in W^{H}(U_{R, S})\cap C^{0, \gamma}(U_{R, S})$ is compactly
supported in $U_{R, S}$. For $\delta>0$, define
\[
u_{\delta}(x,t):=\int_{\RR^{N}}u\left(x-y,t\right)\varrho_{\delta}(y)\,dy,
\]
where $\varrho_{\delta}$ is the standard mollifier and the integrand
is read as zero when $x-y\not\in U_{R, S}$. Then $\supp u_{\delta}\Subset U_{R, S}$
for all small enough $\delta>0$. Furthermore, $u_{\delta}\in W^{H}(U_{R, S})$ and
\begin{equation}
\lim_{\delta\rightarrow0}\iint_{U_{R, S}(z_0)}\left|Du-Du_{\delta}\right|^{p}+a(z)\left|Du-Du_{\delta}\right|^{q}\, dz=0.\label{eq:space approx grad conv}
\end{equation}
\end{lemma}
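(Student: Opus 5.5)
The plan is to follow the elliptic strategy of the Lavrentiev-gap literature (in the spirit of \cite{BCFM2023}), carried out slice by slice in time. We combine the spatial $\mathcal Z^\kappa$-bound with the $\gamma$-H\"older continuity of $u$ to get a uniform pointwise bound for $a(z)|Du_\delta|^q$, and then close with dominated convergence.

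\emph{Support and the $p$-part.} Since $u$ has compact support in $U_{R,S}$, the support of $u_\delta$ lies in the $\delta$-neighbourhood (in $x$) of $\supp u$. Hence $\supp u_\delta \Subset U_{R,S}$ for small $\delta$. For the $p$-part we use $Du_\delta=(Du)_\delta$, mollifying in $x$ only for a.e.\ fixed $t$. Standard properties of mollifiers in $L^p$, applied on a.e.\ slice and integrated in $t$ with dominated convergence (the slice norms satisfy $\|(Du)_\delta(\cdot,t)\|_{L^p}\le\|Du(\cdot,t)\|_{L^p}$), give $\iint|Du-Du_\delta|^p\,dz\to0$.

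\emph{Pointwise bound for the $q$-part.} Fix $z=(x,t)$ and set $a^-_\delta(z):=\inf_{y\in B_\delta(x)}a(y,t)$. The spatial $\mathcal Z^\kappa$ hypothesis gives $a(z)\le c_a\bigl(a(y,t)+\delta^\kappa\bigr)$ for all $y\in B_\delta(x)$, so $a(z)\le c\,(a_\delta^-(z)+\delta^\kappa)$. There are two estimates for $|Du_\delta|$. The first uses $\int D\varrho_\delta=0$ together with H\"older continuity of $u$:
\[
|Du_\delta(z)|=\Bigl|\int (u(x-y,t)-u(x,t))D\varrho_\delta(y)\,dy\Bigr|\le c[u]_\gamma\,\delta^{\gamma-1}.
\]
The second is Jensen, giving $|Du_\delta|^q\le (|Du|^q)_\delta$ and $a^-_\delta(z)|Du_\delta(z)|^q\le ((a|Du|^q)(\cdot,t))_\delta(x)$. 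Therefore
\[
a(z)|Du_\delta|^q\le c\,(a|Du|^q)_\delta(z)+c\,\delta^\kappa|Du_\delta|^{q-p}|Du_\delta|^p\le c\,(a|Du|^q)_\delta+c\,\delta^{\kappa-(1-\gamma)(q-p)}(|Du|^p)_\delta .
\]
The gap condition yields $q-p\le \frac{q\kappa}{q-2\gamma}\le \frac{\kappa}{1-\gamma}$, as noted in Remark~\ref{rem:comments}(i); this uses $q\ge2$. Hence the exponent $\kappa-(1-\gamma)(q-p)$ is nonnegative, and
\[
a(z)|Du_\delta|^q\le c\,\bigl((a|Du|^q)_\delta+(|Du|^p)_\delta\bigr).
\]

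\emph{Conclusion.} The right-hand side $g_\delta:=c\,(H(\cdot,|Du|))_\delta$ converges in $L^1(U_{R,S})$ to $c\,H(z,|Du|)$, by slicewise $L^1$-convergence of mollifiers plus dominated convergence in $t$. Also $Du_\delta\to Du$ a.e.\ along a subsequence. So $a|Du-Du_\delta|^q\le 2^{q-1}\bigl(a|Du|^q+g_\delta\bigr)$, and the generalized dominated convergence theorem gives convergence to zero. Since every subsequence has such a further subsequence, the full limit is zero, which in particular shows $u_\delta\in W^H$.

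\emph{Main obstacle.} The main obstacle is the pointwise step. One must verify that the H\"older bound on $Du_\delta$ holds uniformly on each time slice, with only spatial H\"older continuity used. One must also check that the gap condition indeed implies $(1-\gamma)(q-p)\le\kappa$, i.e.\ $\frac{q}{q-2\gamma}\le\frac1{1-\gamma}$, which reduces to $2\gamma\le q\gamma$.
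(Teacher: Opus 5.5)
Your proof is correct and follows essentially the same route as the paper: you derive the pointwise bound $a(z)|Du_\delta|^q \lesssim (a|Du|^q)_\delta + \delta^{\kappa-(1-\gamma)(q-p)}(|Du|^p)_\delta$ from the spatial $\mathcal Z^\kappa$ condition, the H\"older estimate $|Du_\delta|\lesssim \delta^{\gamma-1}$ and Jensen; you then use $q-p\le \frac{q\kappa}{q-2\gamma}\le \frac{\kappa}{1-\gamma}$ and close with a uniform-integrability argument (generalized dominated convergence in your version, Vitali in the paper's). Two of your choices are slightly cleaner than the paper's without changing the argument: you work with the infimum $a^-_\delta$ rather than a minimizing point $x_t^\ast$, whose existence would need more regularity of $a$, and you treat the $p$-part first, which gives the a.e. convergence of $Du_\delta$ that the final limit step requires.
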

\begin{proof} We follow \cite[Theorem 3.2]{SS25}. We need to verify \eqref{eq:space approx grad conv} and that
\[\int_{U_{R, S}(z_0)}|Du_{\delta}|^p+a(z)|Du_{\delta}|^q\, dz< \infty.\]
\textbf{Step 1:} Let $d:=\operatorname{dist}(\supp u,\partial_{\mathcal{P}} U_{R, S}(z_0))$, $\delta_{0}:=d/8$
and $r:=R-d/4, \tau=S-d/4$. Then $u_{\delta}\equiv0$ in $U_{R, S}\setminus U_{r, \tau}$
for all $\delta<\delta_{0}$. Fix $(x,t)\in U_{r, \tau}$, where $t$ is such that $u(\cdot, t)$ is $\gamma$-H\"{o}lder continuous in $B_r.$ Further, assume that $\delta<\delta_{0}$ and let $x_{t}^{\ast}\in\overline{B}_{\delta}(x)$
be such that 
\[
a(x_{t}^{\ast},t)=\inf_{y\in B_{\delta}(x)}a(y,t),
\]
which is well defined since $B_{\delta}(x)\Subset B_{r}.$ Using $u \in C^{0, \gamma}_{loc}(\Omega)$, we have
\begin{align*}
&\left|Du_{\delta}(x,t)\right|=\left|D\int_{\RR^{N}}u\left(x-y,t\right)\varrho_{\delta}(y)\, dy\right|\\
& \leq \left|\int_{B(x, \delta)}\left(u(y,t)-u(x,t)\right)D\varrho_{\delta}(x-y)\,dy\right|+\left|u(x, t)\int_{B(x, \delta)}D\varrho_{\delta}(x-y)\, dy\right|\\
 & \leq([u]_{\gamma, \gamma/q}+||u||_{\infty})\delta^{\gamma}\left\Vert D\varrho_{\delta}\right\Vert _{L^{1}(\RR^{N})}\leq([u]_{\gamma, \gamma/q}+||u||_{\infty})\delta^{-1+\gamma}\left\Vert D\varrho\right\Vert _{L^{1}(\RR^{N})}=:\delta^{-1+\gamma}C_{0}.
\end{align*}
Note that,  we also used that $\left\Vert D\varrho_{\delta}\right\Vert _{L^{1}(\RR^{N})}=\delta^{-1}\left\Vert D\varrho\right\Vert _{L^{1}(\RR^{N})}$ and the fundamental theorem of calculus to conclude $\int_{B(x, \delta)}D\varrho_{\delta}(x-y)\, dy=0.$
Using the above display, H\"{o}lder continuity of $a$ in space variable,
as well as that $\left|x-x_{t}^{\ast}\right|\leq\delta$, we obtain
\begin{align*}
 & \left|Du_{\delta}(x,t)\right|^{p}+a(x,t)\left|Du_{\delta}(x,t)\right|^{q}\nonumber \\
 & =\frac{\left|Du_{\delta}(x,t)\right|^{p}+a(x,t)\left|Du_{\delta}(x,t)\right|^{q}}{\left|Du_{\delta}(x,t)\right|^{p}+c_a a(x_{t}^{\ast},t)\left|Du_{\delta}(x,t)\right|^{q}}\left(\left|Du_{\delta}(x,t)\right|^{p}+c_a a(x^{\ast}_t,t)\left|Du_{\delta}(x,t)\right|^{q}\right)\nonumber \\
 & =\left(1+\frac{(a(x,t)-c_ a a(x_{t}^{\ast},t))\left|Du_{\delta}(x,t)\right|^{q}}{\left|Du_{\delta}(x,t)\right|^{p}+c_a a(x_{t}^{\ast},t)\left|Du_{\delta}(x,t)\right|^{q}}\right)\left(\left|Du_{\delta}(x,t)\right|^{p}+c_a a(x^{\ast}_t,t)\left|Du_{\delta}(x,t)\right|^{q}\right)\nonumber \\
 & \leq\left(1+\frac{c_a \delta^{\kappa} \left|Du_{\delta}(x,t)\right|^{q}}{\left|Du_{\delta}(x,t)\right|^{p}}\right)\left(\left|Du_{\delta}(x,t)\right|^{p}+c_a a(x_{t}^{\ast},t)\left|Du_{\delta}(x,t)\right|^{q}\right)\nonumber \\
 & \leq\left(1+c_a C^{q-p}_0\delta^{\kappa}\left(\delta^{-1+\gamma}\right)^{q-p}\right)\left(\left|Du_{\delta}(x,t)\right|^{p}+c_a a(x_{t}^{\ast},t)\left|Du_{\delta}(x,t)\right|^{q}\right)\nonumber \\
 &\leq \left(1+c_a C^{q-p}_0 \delta^{\kappa}\left(\delta^{-1+\gamma}\right)^{\frac{q\kappa}{q-2\gamma}}\right)\left(\left|Du_{\delta}(x,t)\right|^{p}+c_a a(x_{t}^{\ast},t)\left|Du_{\delta}(x,t)\right|^{q}\right)\nonumber \\
 &\leq \left(1+c_a C^{q-p}_0 \delta^{\kappa}\left(\delta^{-1+\gamma}\right)^{\frac{\kappa}{1-\gamma}}\right)\left(\left|Du_{\delta}(x,t)\right|^{p}+c_a a(x_{t}^{\ast},t)\left|Du_{\delta}(x,t)\right|^{q}\right)\nonumber \\
 & =\left(1+c_a C^{q-p}_0\right)\left(\left|Du_{\delta}(x,t)\right|^{p}+c_a a(x_{t}^{\ast},t)\left|Du_{\delta}(x,t)\right|^{q}\right)\nonumber \\
 & =:C_{1}\left(\left|Du_{\delta}(x,t)\right|^{p}+a(x_{t}^{\ast},t)\left|Du_{\delta}(x,t)\right|^{q}\right)
\end{align*}
where $C_1:=\max\{(1+C^{q-p}_0 c_a), c_a(1+C^{q-p}_0 c_a)\}.$
Further, since $\int_{B_{\delta}}\rho_{\delta}(y)\, dy=1$, we can apply Jensen's inequality to estimate
\begin{align*}
 & \left|Du_{\delta}(x,t)\right|^{p}+a(x_{t}^{\ast},t)\left|Du_{\delta}(x,t)\right|^{q}\\
 & =\left|\int_{B_{\delta}}Du(x-y,t)\rho_{\delta}(y)\, d y\right|^{p}+a(x_{t}^{\ast},t)\left|\int_{B_{\delta}}Du\left(x-y,t\right)\rho_{\delta}(y)\, d y\right|^{q}\\
 & \leq\int_{B_{\delta}}\left|Du(x-y,t)\right|^{p}\rho_{\delta}(y)\, dy+\int_{B_{\delta}}a(x_{t}^{\ast},t)\left|Du\left(x-y,t\right)\right|^{q}\rho_{\delta}(y)\, dy\\
 & \leq\int_{B_{\delta}}\left|Du(x-y,t)\right|^{p}\rho_{\delta}(y)\, d y+\int_{B_{\delta}}a(x-y,t)\left|Du\left(x-y,t\right)\right|^{q}\rho_{\delta}(y)\, d y,\\
 & =(\left|Du\right|^{p})_{\delta}(x,t)+(a\left|Du\right|^{q})_{\delta}(x,t),
\end{align*}
where the last estimate follows from the choice of $x_{t}^{\ast}$. Combining the last two displays, we arrive at
\begin{equation}
\left|Du_{\delta}(x,t)\right|^{p}+a(x,t)\left|Du_{\delta}(x,t)\right|^{q}\leq C_{1}(\left|Du\right|^{p}+a\left|Du\right|^{q})_{\delta}(x,t)\label{eq:space approx 4-1}
\end{equation}
for all $x\in B_{r}(x_0)$ and almost all $t\in(t_0-\tau, t_0+\tau)$.

\textbf{Step 2:} We define $w:\RR^{N+1}\rightarrow\RR$
by setting $w=(\left|Du\right|^{p}+a\left|Du\right|^{q})$ in $U_{r, \tau}$
and $w=0$ in $\RR^{N+1}\setminus U_{r, \tau}.$ Since $w\in L^{1}(\RR^{N+1})$
with compact support, we have $w_{\delta}\rightarrow w$ in $L^{1}(\RR^{N+1})$
by the standard properties of the mollifier. On the other hand, by (\ref{eq:space approx 4-1}),
we have
\[
\left|Du_{\delta}\right|^{p}+a\left|Du_{\delta}\right|^{q}\leq C_{1}w_{\delta}\quad\text{almost everywhere in }U_{r, \tau}.
\]
It follows that $\left|Du_{\delta}\right|^{p}+a\left|Du_{\delta}\right|^{q}$
is uniformly integrable in $U_{r, \tau}$ with respect to $\delta$.
Therefore, by Vitali's convergence theorem, 
\[
\left|Du_{\delta}\right|^{p}+a\left|Du_{\delta}\right|^{q}\rightarrow\left|Du\right|^{p}+a\left|Du\right|^{q}\quad\text{in }L^{1}(U_{r, \tau}).
\]
Then observe that
\begin{align*}
 & \int_{U_{r, \tau}}\left|Du-Du_{\delta}\right|^{p}+a(z)\left|Du-Du_{\delta}\right|^{q}\, d z\\
 & \leq\int_{U_{r, \tau}}2^{p-1}(\left|Du\right|^{p}+\left|Du_{\delta}\right|^{p})+2^{q-1}a(z)(\left|Du\right|^{q}+\left|Du_{\delta}\right|^{q})\, d z,
\end{align*}
so that $\left|Du-Du_{\delta}\right|^{p}+a\left|Du-Du_{\delta}\right|^{q}$
is again uniformly integrable, and by Vitali's convergence theorem,
the left-hand side of the above inequality tends to zero as $\delta\rightarrow0$.
Since both $\left|Du\right|$ and $\left|Du_{\delta}\right|$ vanish
in $U_{R, S}\setminus U_{r, \tau}$ (provided that $\delta<\delta_{0}$),
the claim follows.
\end{proof}
Next, we prove the time approximation lemma. For that, we recall the definitions \eqref{right slanted steklov} and \eqref{left slanted steklov}.
\begin{lemma}[Time approximation]\label{lem:steklov lemma}
Let $a:U_{R, S}(z_0) \rightarrow [0,\infty)$ be bounded and measurable and $u \in W^H(U_{R, S})$ and compactly supported in $U_{R, S}(z_0).$ If \eqref{eq:a increasing} holds, then for any small $h_{0}>0$,we have
\[
\lim_{h\rightarrow0}\iint_{B_{R}(x_0)\times (t_0-S, t_0+S+h_0)}\left|D[u]^R_h-Du \right|^p+a(z)\left|D[u]^R_h-Du\right|^{q} \, dz =0,
\] up to a subsequence.
Moreover, if $u\in W^{H}(U_{R, S}(z_0))\cap C(U_{R, S}(z_0))$, then $[u]_{h}^{R}\in W^{H}(U_{R, S}(z_0))$
with $D[u]_{h}^{R}=[Du]_{h}^{R}$ and the weak time
derivative of $[u]_{h}^{R}$ and $[u]^L_{h}$ are given by
\[
-\partial_{t}[u]_{h}^{R}(x,t)=\frac{u(x, t)-u(x+h\zeta,t+h)}{h}+\zeta\cdot D[u]_{h}^{R}(x,t),
\]
and
\[\partial_{t}[u]_{h}^{L}(x,t)=\frac{u(x, t)-u(x+h\zeta,t-h)}{h}+\zeta\cdot D[u]_{h}^{L}(x,t).\]
If (\ref{eq:a decreasing}) holds, then we have the analogous convergence result for $[u]^{L}_h$.
\end{lemma}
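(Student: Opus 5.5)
Proving the convergence mirrors the space approximation (Lemma~\ref{lem: spcae approximation}): a pointwise domination of the modular of $D[u]^R_h$ by a quantity converging in $L^1$, followed by Vitali's theorem. First I will dispose of the structural claims. Since $u$ is compactly supported, $[u]^R_h$ is supported in a slightly larger set for small $h$. Differentiating under the integral sign gives $D[u]^R_h=[Du]^R_h$, because the shift $x\mapsto x+(s-t)\zeta$ is a translation in $x$ for fixed $s,t$. For the time derivative, substitute $y=x-t\zeta$ to write
\[
[u]^R_h(x,t)=\frac1h\int_t^{t+h}u(y+s\zeta,s)\,ds.
\]
Differentiating in $t$ at fixed $y$ gives $\frac1h\bigl(u(y+(t+h)\zeta,t+h)-u(y+t\zeta,t)\bigr)$. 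Converting back via the chain rule, $\partial_t|_x=\partial_t|_y-\zeta\cdot D$, yields the stated formula for $-\partial_t[u]^R_h$; continuity of $u$ makes the boundary terms pointwise meaningful. The left average is treated symmetrically.

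For the convergence under \eqref{eq:a increasing}, the key step is a pointwise bound on $a(x,t)|[Du]^R_h(x,t)|^q$. By Jensen's inequality,
\[
|[Du]^R_h(x,t)|^q\le \frac1h\int_t^{t+h}|Du(x+(s-t)\zeta,s)|^q\,ds.
\]
Since $s\ge t$, assumption \eqref{eq:a increasing} gives $a(x,t)\le C_a\, a(x+(s-t)\zeta,s)$ for every $s\in(t,t+h)$. Inserting this inside the integral yields
\[
|D[u]^R_h|^p+a|D[u]^R_h|^q\le C_a\bigl[\,|Du|^p+a|Du|^q\,\bigr]^R_h=:C_a[w]^R_h,
\]
where $w:=|Du|^p+a|Du|^q\in L^1$ is extended by zero. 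This is exactly why the slanted average with the monotonicity in the correct time direction is needed: no regularity of $a$ in time is used.

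It remains to show $[w]^R_h\to w$ and $[Du]^R_h\to Du$ in $L^1$ and in $L^p$, respectively, and a.e. along a subsequence. The map $(x,t)\mapsto(x-t\zeta,t)$ is measure preserving (Jacobian one). It conjugates the slanted Steklov average to the ordinary Steklov average in time, $f\mapsto \frac1h\int_t^{t+h}f(y,s)\,ds$, of the transformed function. The standard $L^1$ and a.e. convergence of Steklov averages then transfers back. Hence $[w]^R_h\to w$ in $L^1$, so the family $|D[u]^R_h|^p+a|D[u]^R_h|^q$ is uniformly integrable. Along a subsequence, $D[u]^R_h\to Du$ a.e., because $[Du]^R_h\to Du$ in $L^p$. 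The bound $|D[u]^R_h-Du|^p+a|D[u]^R_h-Du|^q\le 2^{q-1}\bigl(C_a[w]^R_h+w\bigr)$ shows the differences are uniformly integrable too. Vitali's theorem then gives the claim on $B_R\times(t_0-S,t_0+S+h_0)$, with the domain extended by $h_0$ to contain the support shift. Membership $[u]^R_h\in W^H$ follows from the same domination.

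The main obstacle I expect is bookkeeping with the slanted shift at the spatial boundary. One must ensure that $x+(s-t)\zeta$ stays inside the domain where \eqref{eq:a increasing} is available. I would handle this via the compact support of $u$ and by taking $h$ small relative to $\operatorname{dist}(\supp u,\partial B_R)/(1+|\zeta|)$, so that only points where both sides are defined contribute.
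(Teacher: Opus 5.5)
Your proposal is correct and takes essentially the paper's route: conjugate by the shear $(x,t)\mapsto(x-t\zeta,t)$ to reduce the slanted average to the ordinary Steklov average, use \eqref{eq:a increasing} (available since $s\ge t$) to dominate $a\,|D[u]^R_h|^q$ pointwise by a family converging in $L^1$, conclude with Vitali's theorem, and obtain the time-derivative formulas from the same change of variables. The differences are cosmetic: you dominate via Jensen by $C_a[\,|Du|^p+a|Du|^q\,]^R_h$, whereas the paper dominates by $C_a\bigl([a^{1/q}|\partial_i u|]^R_h\bigr)^q$ and uses $L^q$ convergence of that average; you also state explicitly the a.e.\ convergence along a subsequence that the paper's Vitali step uses without saying so.
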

\begin{proof}
Consider the linear mapping $\Phi:\RR^{N}\rightarrow\RR^{N}$,
\[
\Phi(x,t)=(x-t\zeta,t)\quad\text{and its inverse}\quad\Phi^{-1}(x,t)=(x+t\zeta,t).
\]
Since $u$ is compactly supported in $U_{R, S}(z_0)$, there exists a cylinder
$U_{r, \tau}(z_0)\Subset U_{R, S}(z_0)$ such that for small enough $h>0$, we have
\begin{equation*}
u\equiv 0 \,\,\, \text{and}\,\,\,  [u]_{h}^{R}\equiv0\quad\text{in }U_{R, S}(z_0)\setminus U_{r, \tau}(z_0),
\end{equation*}
and 
\[
(x+h\zeta,t+h)\in U_{R, S}(z_0)\quad\text{for all }(x,t)\in U_{r, \tau}(z_0).
\]
Denote the slanted cylinders $U^{\prime}_{r, \tau}:=\Phi(U_{r, \tau}),$ $U_{R, S}^{\prime}:=\Phi(U_{R, S})$.
 Denote by $[\cdot]_{h}$ the standard Steklov average (without slanting). Fix $h_{0}>0$ and suppose that $h<h_{0}$. For $i \in \{1,\ldots,N\}$, we denote $v := \partial_i u$. By standard properties of Steklov averages, we have $\partial_i[u]_h = [\partial_i u]_h = [v]_h$.
Using \eqref{eq:a increasing} we have for all $(x,t)\in U^{\prime}_{r, \tau}$ 

\begin{align} \label{eq:bla3}
\left(a\circ\Phi^{-1}\left|[v\circ\Phi^{-1}]_{h}\right|^{q}\right)(x,t) & =\left|\frac{1}{h}\int_{t}^{t+h}a^{1/q}(x+t\zeta,t)v(x+s\zeta,s)\, ds\right|^{q}\nonumber\\ 
 & \leq C_{0}\left(\frac{1}{h}\int_{t}^{t+h}a^{1/q}(x+s\zeta,s)\left|v(x+s\zeta,s)\right|\, ds\right)^{q} \nonumber \\ 
 & =C_{0}[(a^{1/q}\left|v\right|)\circ\Phi^{-1}]_{h}^{q}(x,t).
\end{align}
Since $(a^{1/q}\left|v\right|)\circ\Phi^{-1}\in L^{q}(U_{r, \tau}^{\prime})$,
the above display and standard properties of Steklov averages imply
that
\[
\iint_{U^{\prime}_{r, \tau}(z_0)}(a\circ\Phi^{-1})\left|[v\circ\Phi^{-1}]_{h}\right|^{q}\, dz\leq C<\infty \quad\text{for all small \ensuremath{h>0,}}
\]
where $C$ is independent of $h$.On the other hand, for any $(x,t)\in U^{\prime}_{r, \tau} (z_0)$, we have
\begin{align}
[v\circ\Phi^{-1}]_{h}(x,t) & =\frac{1}{h}\int_{t}^{t+h}v(x+s\zeta,s)\,ds\nonumber \\
 & =\frac{1}{h}\int_{t}^{t+h}v(x+t\zeta+(s-t)\zeta,s)\, ds\nonumber \\
 & =\frac{1}{h}\int_{t}^{t+h}v(\Phi^{-1}(x,t)+(s-t)\zeta,s)\,ds\nonumber \\
 & =[v]_{h}^{R}\circ\Phi^{-1}(x,t).\label{eq:bla2}
\end{align}
Then by change of variables and the last two displays, we obtain (note that $\det J_{\Phi}=1$)
\begin{align*}
\iint_{U_{r, \tau}}a(x, t)\left|[v]_{h}^{R}(x, t)\right|^{q}\,dx\, dt & =\iint_{U_{r, \tau}^{\prime}}(a\circ\Phi^{-1}(y, \tau))\left|[v]_{h}^{R}\circ\Phi^{-1}(y, \tau)\right|^{q}\,dy\, d\tau\\
 & =\iint_{U_{r, \tau}^{\prime}}(a\circ\Phi^{-1}(x, t))\left|[v\circ\Phi^{-1}]_{h}(y, \tau)\right|^{q}\,dy\,d\tau<\infty
\end{align*}
for small $h>0$. Since $[v]_{h}^{R}$ vanishes outside of $U_{r, \tau}$,
it follows that $[v]_{h}^{R}\in L^{H}(U_{r, \tau})$. To show the convergence, observe that by (\ref{eq:bla2}) we have
\begin{align}
\iint_{U_{r, \tau}}a(x , t)\left|([v]_{h}^{R}-v)(x, t)\right|^{q}\,dx\, dt & =\iint_{U^{\prime}_{r, \tau}}a\circ\Phi^{-1}(y, \tau)\left|[v]_{h}^{R}\circ\Phi^{-1}(y, \tau)-v\circ\Phi^{-1}(y, \tau)\right|^{q}\,dy\, d\tau\nonumber \\
 & =\iint_{U^{\prime}_{r, \tau}}a\circ\Phi^{-1}(y,s)\left|[v\circ\Phi^{-1}]_{h}(y, \tau)-v\circ\Phi^{-1}(y, \tau)\right|^{q}\,dy\,d\tau.\label{eq:convegence}
\end{align}
Observe that 
\begin{align*}
a\circ\Phi^{-1}\left|[v\circ\Phi^{-1}]_{h}-v\circ\Phi^{-1}\right|^{q}&\leq2^{q-1}a\circ\Phi^{-1}\left(\left|[v\circ\Phi^{-1}]_{h}\right|^{q}+\left|v\circ\Phi^{-1}\right|^{q}\right) \\&\leq2^{q-1}\left(C_0 [(a^{1/q}|u|)\circ\Phi^{-1}]_h^q +a\circ\Phi^{-1}|v\circ\Phi^{-1}|^q \right),
\end{align*}
where in the last inequality we used \eqref{eq:bla3}. Since the right-hand side of the above display converges in $L^{1}(U^{\prime}_{r, \tau})$ by the properties of the standard Steklov average, it follows that the left-hand side
is uniformly integrable in $U^{\prime}_{r, \tau}$. Therefore, the right-hand
side of (\ref{eq:convegence}) converges to zero.

Now we prove the second assertion. Let $\varphi\in C_{0}^{\infty}(Q)$. Then, using that
$u\circ\Phi^{-1}$ has a weak space derivative in $U^{\prime}_{r, \tau}$ with
$D(u\circ\Phi^{-1})=Du\circ\Phi^{-1}$, we obtain (using also the
same computation as in (\ref{eq:bla2}), but for $Du$)
\begin{align*}
&\iint_{U_{r, \tau}}[u]_{h}^{R}(x, t)D\varphi(x,t)\,dx\,dt\\
& =\iint_{U^{\prime}_{r, \tau}}([u]_{h}^{R}\circ\Phi^{-1}(y, \tau))(D\varphi\circ\Phi^{-1}(y, \tau))\,dy\, d\tau\\
 & =\iint_{U^{\prime}_{r, \tau}}([u\circ\Phi^{-1}]_{h}(y, \tau))D(\varphi\circ\Phi^{-1}(y, \tau))\,dy\, d\tau\\
 & =\iint_{U^{\prime}_{r, \tau}}-(\varphi\circ\Phi^{-1}(y, \tau))[D(u\circ\Phi^{-1}(y, \tau))]_{h}\,dy\, d\tau\\
 & =\iint_{U^{\prime}_{r, \tau}}-(\varphi\circ\Phi^{-1}(y, \tau))[Du\circ\Phi^{-1}(y, \tau)]_{h}\,dy\, d\tau\\
 & =\iint_{U^{\prime}_{r, \tau}}-(\varphi\circ\Phi^{-1}(y, \tau))[Du]_{h}^{R}\circ\Phi^{-1}(y, \tau)\,dy\, d\tau=\iint_{U_{r, \tau}}-\varphi[Du]_{h}^{R}(x, t)\,dx\,dt.
\end{align*}

Therefore, the weak derivative of $[u]_{h}^{R}$ is $[Du]_{h}^{R}$.
It follows from the previous statement that $[u]_{h}^{R}\in W^{H}(U_{R, S}(z_0))$. For the
time derivative, we compute
\begin{align*}
\iint_{U_{r, \tau}}[u]_{h}^{R}(x,t)\partial_{t}\varphi(x, t) \,dx \,dt & =\iint_{U^{\prime}_{r, \tau}}([u]_{h}^{R}\circ\Phi^{-1}(y,s))((\partial_{s}\varphi)\circ\Phi^{-1}((y,s))\,dy \, d\tau\\
 & =\iint_{U^{\prime}_{r, \tau}}[u\circ\Phi^{-1}]_{h}(y, \tau)((\partial_{s}\varphi)\circ\Phi^{-1}(y, \tau))\,dy\, d\tau\\
 & =\iint_{U^{\prime}_{r, \tau}}[u\circ\Phi^{-1}]_{h}(y, \tau)\partial_{s}(\varphi\circ\Phi^{-1}(y, \tau))\,dy\, d\tau\\
 &-\iint_{U^{\prime}_{r, \tau}}[u\circ\Phi^{-1}]_{h}(y, \tau)\zeta\cdot(D\varphi)\circ\Phi^{-1}(y, \tau)\,dy\, d\tau,
\end{align*}
where we used that $\partial_{t}(\varphi\circ\Phi^{-1})=\partial_{t}\varphi\circ\Phi^{-1}+\zeta\cdot D\varphi\circ\Phi^{-1}$.
Further, we have
\begin{align*}
&\iint_{U^{\prime}_{r, \tau}}[u\circ\Phi^{-1}]_{h}(y, \tau)\partial_{\tau}(\varphi\circ\Phi^{-1}(y, \tau))\,dy\, d\tau\\
& =-\iint_{U^{\prime}_{r, \tau}}\partial_{\tau}[u\circ\Phi^{-1}]_{h}(y, \tau)\varphi\circ\Phi^{-1}(y, \tau)\,dy\, d\tau\\
 & =-\iint_{U^{\prime}_{r, \tau}}\frac{u\circ\Phi^{-1}(y,\tau+h)-u\circ\Phi^{-1}(y,\tau)}{h}\varphi\circ\Phi^{-1}(y,\tau)\,dy\,d\tau\\
& =-\iint_{U^{\prime}_{r, \tau}}\frac{u\circ\Phi^{-1}\circ f(y,\tau)-u\circ\Phi^{-1} \circ id \,(y,\tau)}{h}\varphi\circ\Phi^{-1}(y,\tau)\,dy\,d\tau
\end{align*}
where $f(y, \tau)=(y, \tau+h).$

Combining the last two displays (and using again that $[u\circ\Phi^{-1}]_{h}=[u]_{h}^{R}\circ\Phi^{-1}$),
we have
\begin{align*}
&\iint_{U_{r, \tau}}[u]_{h}^{R}(x, t)\partial_{t}\varphi(x, t)\,dz\\
& =-\iint_{U^{\prime}, _{r, \tau}}\frac{u\circ\Phi^{-1}\circ f(y,\tau)-u\circ\Phi^{-1}\circ id\, (y,\tau)}{h}\varphi\circ\Phi^{-1}\circ id\, (y,\tau)\,dy\,d\tau.\\
 & \phantom{=}-\iint_{U^{\prime}_{r, \tau}}([u]_{h}^{R}\circ\Phi^{-1}\circ id\,(y, \tau))\zeta\cdot((D\varphi)\circ\Phi^{-1}\circ id\, (y, \tau))\,dy\, d\tau\\
&=\iint_{U^{\prime}_{r, \tau}}\varphi(x+t\zeta, t)\frac{u(x+t\zeta, t+h)-u(x+t\zeta, t)}{h}\, dz-\iint_{U_{r, \tau}}[u]^{R}_h \zeta\cdot D\varphi\, dz\\
&=\iint_{U_{r, \tau}}\Phi\circ \varphi(x+t \zeta, t)\frac{\Phi \circ u(x+t\zeta, t+h)-\Phi \circ u(x+t\zeta, t)}{h}\, dz\\
&-\iint_{U_{r, \tau}}(\Phi \circ [u]^{R}_h) \zeta\cdot (\Phi \circ D\varphi)\, dz\\
&=\iint_{U_{r, \tau}}\varphi(x-t\zeta +t\zeta, t)\frac{u(x-(t+h)\zeta+t\zeta, t+h)-u(x-t\zeta +t\zeta, t)}{h}\, dz\\
&-\iint_{U_{r, \tau}}[u]^{R}_h \zeta\cdot D\varphi\, dz \\
 & =-\iint_{U_{r, \tau}}\varphi(x,t)\frac{ u(x+h \zeta,t+h)- u(x,t)}{h}\,dx\, dt-\iint_{U_{r, \tau}}[u]_{h}^{R}(x, t)\zeta\cdot (D\varphi)(x, t)\, dx \, dt\\
 & =\iint_{U_{r, \tau}}\varphi(x,t)\frac{u(x,t)-u(x+h\zeta,t+h)}{h}\, dx\, dt+\iint_{U_{r, \tau}}\varphi D[u]_{h}^{R}\cdot\zeta\,dz,
\end{align*}
where we used the change of variable,
\begin{align*}
 u\circ\Phi^{-1}\circ f(y, \tau)=u\circ\Phi^{-1}\circ f\circ\Phi(x,t)&=u\circ\Phi^{-1}(f(x-t\zeta,t))\\
    &=u(\Phi^{-1}(x-t\zeta,t+h))\\
   &=u((x-t\zeta)+(t+h)\zeta,t+h)  =u(x+h\zeta, t+h),   
\end{align*}
and the same computation holds for $f=id.$
This completes the proof of $$-\partial_{t}[u]^{R}_h=\frac{u(x,t)-u(x+h \zeta,t+h)}{h}+D[u]_{h}^{R}\cdot\zeta$$
in the weak sense.
\end{proof}

Using these two approximation lemmas, we prove Caccioppoli type estimates for $2\le p\le q\le p +\frac{q\kappa}{q-2\gamma}.$ Here we only use the assumption \eqref{eq:a increasing}, i.e., for $\zeta \in \RR^N,$ we assume
\begin{align}\label{almost increasing for a}
 a(x,t) \leq C_a a(x+(s-t)\zeta,s) \quad \text{for all } (x,t),(x,s)\in U_{R, S}(z_0) \text{ with } t \leq s. 
\end{align}
\begin{lemma}[Caccioppoli inequality I] \label{lem: energy inequality}
  Let $u$ be a weak solution to \eqref{eq:main_double} and let $a$ satisfy \eqref{almost increasing for a}. Then, for $U_{R,S}(z_0)\subset \Omega_T$, $r\in [R/2,R)$ and $\tau\in[S/2^2,S)$, there exists a constant $c_E$ depending on $N,p,q,\nu$ and $L$ such that the following inequality holds: 
    \begin{align*}
        &\sup_{t\in(t_0-\tau.t_0+\tau)}\dashint_{B_r(x_0)} \frac{|u-(u)_{U_{r,\tau}(z_0)}|^2}{\tau}\; dx +\miint{U_{r,\tau}(z_0)}H(z,|Du|)\;dz\\
        &\qquad \leq c_E\miint{U_{R,S}(z_0)}   H\left(z,\frac{|u-(u)_{U_{R,S}(z_0)}|}{R-r}\right) dz + c_E\miint{U_{R,S}(z_0)}\frac{|u-(u)_{U_{R,S}(z_0)}|^2}{S-\tau}\;dz.
    \end{align*}   
\end{lemma}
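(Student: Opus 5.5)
Our plan is to test the weak formulation with $\varphi = \eta^q \psi_\epsilon (u-k)$, where $k:=(u)_{U_{R,S}(z_0)}$ and the factors are chosen as follows. The cutoff $\eta\in C_0^\infty(B_R(x_0))$ satisfies $\eta\equiv1$ on $B_r(x_0)$ and $|D\eta|\le 2/(R-r)$. The time cutoff $\psi$ vanishes near $t_0-S$, equals $1$ on $(t_0-\tau,t_0+\tau)$, and has $|\partial_t\psi|\le 2/(S-\tau)$. It is further multiplied by a piecewise linear function that cuts off at a level $t_1\in(t_0-\tau,t_0+\tau)$. This test function is not admissible, since $u$ has no time derivative. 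We therefore regularize in time with the slanted Steklov average: we work with the Steklov-averaged form of the equation and test with $\eta^q\psi\,([u]^R_h-k)$, or equivalently move the average onto the test function.

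The point of the slanting is as follows. By Lemma~\ref{lem:steklov lemma}, the averaged function has
\[
-\partial_t[u]^R_h=\frac{u(x,t)-u(x+h\zeta,t+h)}{h}+\zeta\cdot D[u]^R_h .
\]
The extra drift term $\zeta\cdot D[u]^R_h$ multiplies $([u]^R_h-k)\eta^q\psi$. This equals $\tfrac12\zeta\cdot D([u]^R_h-k)^2\,\eta^q\psi$, which we integrate by parts in space. The result is a term bounded by $|\zeta|\,|D\eta^q|\,|u-k|^2$ in the limit. Since $\zeta$ is fixed and part of \texttt{data}, and $q|D\eta|\lesssim 1/(R-r)$, we will absorb this into the right-hand side. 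The step $\frac{|u-k|^2}{R-r}\lesssim\frac{|u-k|^2}{S-\tau}$ is not automatic, however. If it fails, we would use Young's inequality, splitting it into $H(z,|u-k|/(R-r))$ and the $|u-k|^2/(S-\tau)$ term. Then we pass $h\to0$. The time-derivative part gives, in the standard way, $\frac12\int_{B_R}\eta^q\psi(t_1)|u-k|^2(x,t_1)\,dx$ minus $\iint \eta^q|\partial_t\psi||u-k|^2$. For the diffusion part we need $A(z,u,Du)$ tested against $D[u]^R_h$ to converge. This is exactly where Lemma~\ref{lem:steklov lemma} is used under \eqref{almost increasing for a}: $D[u]^R_h\to Du$ in the double-phase modular, along a subsequence. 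Combined with $|A|\le L\,(|Du|^{p-1}+a|Du|^{q-1})$, Hölder's inequality in each phase gives convergence of $\iint A\cdot D(\eta^q\psi[u]^R_h)$. To make $u$ compactly supported, as the lemma requires, we apply it to $\eta\psi u$ or argue on the support of the cutoffs. The spatial approximation, Lemma~\ref{lem: spcae approximation}, is needed only if one also wants to mollify in space in order to justify the chain rule. We expect not to need it here, since $D[u]^R_h=[Du]^R_h$ already.

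After the limit we have
\[
\tfrac12\int\eta^q\psi|u-k|^2(t_1)+\iint\eta^q\psi\,A\cdot Du\le q\iint\eta^{q-1}\psi|A||D\eta||u-k|+\iint|\partial_t\psi||u-k|^2+\text{drift}.
\]
We use the coercivity $A\cdot Du\ge\nu H(z,|Du|)$. For the first term on the right we apply Young's inequality separately in the $p$- and $q$-phase. In the $p$-phase we use $|Du|^{p-1}\eta^{q-1}|u-k|/(R-r)\le\epsilon\eta^q|Du|^p+c_\epsilon|u-k|^p/(R-r)^p$, using $\eta^{(q-1)p/(p-1)}\ge\eta^q$ since $q\ge p$. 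In the $q$-phase we weight by $a(z)$. We absorb the $\epsilon$-terms. Taking the supremum over $t_1$ and dividing by $|U_{r,\tau}|\approx |U_{R,S}|$ (recall $r\ge R/2$ and $\tau\ge S/4$) yields both left-hand terms. The integral average of $|u-(u)_{U_{r,\tau}}|^2$ is then controlled by that of $|u-k|^2$, because the mean minimizes the $L^2$ deviation. The factor $1/\tau$ is handled using $\tau\ge S-\tau$ up to constants, so the time cutoff's $1/(S-\tau)$ dominates.

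We expect the main obstacle to be the passage $h\to0$ in the diffusion term together with the treatment of the drift term created by the slanting. Only subsequential convergence is available, and the integrability of $a|[Du]^R_h|^q$ relies entirely on \eqref{almost increasing for a}. The time-derivative identity involves the slanted difference quotient $u(x+h\zeta,t+h)$. Its pairing with $([u]^R_h-k)$ must be rewritten, via the convexity inequality $(v-w)v\ge\frac12(v^2-w^2)$ and a change of variables along the slanted lines, into a discrete time derivative of $\frac12|u-k|^2$ plus a drift that matches the $\zeta\cdot D$ term. Checking that these two slanted contributions combine correctly is the delicate bookkeeping we would carry out first.
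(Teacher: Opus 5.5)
The plan's overall structure is the paper's: an $\eta^q\times$(time cutoff)$\times(u-k)$ test function, slanted Steklov regularization, modular convergence from Lemma~\ref{lem:steklov lemma}, Young's inequality in each phase, and a supremum over $t_1$. There are, however, two genuine gaps.

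\textbf{The drift.} You propose to integrate $\zeta\cdot D[u]^R_h\,([u]^R_h-k)\eta^q\psi$ by parts and absorb a term of size $|\zeta|\,|D\eta^q|\,|u-k|^2$ into the right-hand side. That cannot give the lemma as stated, for two reasons. First, $c_E$ must depend only on $N,p,q,\nu,L$, and $\zeta$ is not part of \texttt{data}. Second, a term $\frac{|\zeta|}{R-r}\iint_{U_{R,S}(z_0)}|u-k|^2\,dz$ is quadratic in $|u-k|$. For small $|u-k|$ it is dominated by the stated right-hand side only under a relation such as $S-\tau\lesssim (R-r)/|\zeta|$ or $|\zeta|(R-r)\lesssim 1$, and neither is assumed. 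For $p>2$, Young's inequality only trades it for an additive constant $(|\zeta|(R-r))^{p/(p-2)}$.

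The missing point is that the drift cancels exactly, so nothing has to be estimated. Your last paragraph suspects the two slanted contributions ``match''; they do, and the absorption argument should be dropped rather than kept as a fallback. Concretely, put the average on the test function, as the paper does. Test with $[\varphi]^R_h$, where $\varphi=\eta^q\beta^{\tau_1,\varepsilon}w$ and $w=u-k$, and use the formula for $-\partial_t[\varphi]^R_h$.

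\begin{itemize}
\item Substitute $(x,t)\to(x+h\zeta,t+h)$ in the shifted term. The difference-quotient part becomes $\iint h^{-1}\bigl(w(x,t)-w(x-h\zeta,t-h)\bigr)\varphi(x,t)\,dz$. Now $w$ is multiplied by its own increment, so convexity applies. After shifting back, the lower bound tends to $-\frac12\iint w^2(\partial_t+\zeta\cdot D)(\eta^q\beta)\,dz$.
\item The explicit drift $\iint u\,\zeta\cdot D[\varphi]^R_h\,dz$ tends to $\iint w\,\zeta\cdot D(\eta^q\beta w)\,dz=\frac12\iint w^2\,\zeta\cdot D(\eta^q\beta)\,dz$.
\item The two $\zeta$-terms cancel, leaving $-\frac12\iint w^2\partial_t(\eta^q\beta)\,dz$, exactly as without slanting. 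This is what the paper's identification $\lim\iint\mathrm J_h=\lim(\mathrm{III}+\mathrm{IV})$ achieves.
\end{itemize}

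Your write-up mixes this with testing the averaged equation against $\eta^q\psi([u]^R_h-k)$. There the increment $h^{-1}(u(x+h\zeta,t+h)-u(x,t))$ multiplies $[u]^R_h-k$, which is not of the form $v(v-w)$, so the convexity step does not apply. Keeping $\partial_t[u]^R_h$ whole and using the chain rule would avoid the drift altogether. But then the diffusion term involves $[A]^R_h$, whereas Lemma~\ref{lem:steklov lemma} under \eqref{almost increasing for a} only gives $D[\varphi]^R_h\to D\varphi$ in the modular, which handles $A\cdot D[\varphi]^R_h$.

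\textbf{The spatial approximation.} Lemma~\ref{lem: spcae approximation} is not optional, and its role is not a chain rule but admissibility of the test function. By Definition~\ref{def:weak_solution} the equation is only tested against $C_0^\infty$ functions, and $[\varphi]^R_h$ is not smooth: in $x$ it is only as regular as $u$. To insert it you need smooth approximants whose gradients converge in the double-phase modular, so that $\iint A\cdot D(\cdot)\,dz$ passes to the limit. For $a\in\mathcal Z^\kappa$, which may vanish, this density is not automatic; this is the Lavrentiev issue. It is precisely what the spatial mollification lemma supplies, using the H\"older continuity of $u$ (hence of $[\varphi]^R_h$ in $x$) and the gap bound. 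This is where those hypotheses enter the energy estimate.

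A smaller slip: in the $p$-phase Young step you need $\eta^{(q-1)p/(p-1)}\le\eta^q$. This holds because $0\le\eta\le1$ and $(q-1)p/(p-1)\ge q$; the inequality you wrote goes the wrong way.
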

\begin{proof}
 \textbf{Step 1:} Suppose that $\varphi\in W^H(U_{R, S}(z_0))\cap C(\overline{U}_{R, S}(z_0))$ with $\supp \varphi \Subset U_{R, S}(z_0)$. Then the Steklov average $[\varphi]^R_h$ and $[\varphi]^L_h$ are well defined for small enough $h>0$. Moreover, $[\varphi]^R_h \in W^H(U_{R, S}(z_0))$ by Lemma \ref{lem:steklov lemma}.
Further, set
\[
([\varphi]^R_{h})_{\delta}(x,t):=\int_{\RR^{N}}[\varphi]^R_{h}(y,t)\rho_{\delta}(y-x)\, dy.
\]
Then by space approximation from Lemma \ref{lem: spcae approximation} we have that
\begin{equation}
\iint_{U_{R, S}(z_0)}\left|D([\varphi]^R_{h})_{\delta}-D[\varphi]^R_{h}\right|^{p}+a(z)\left|D([\varphi]^R_h)_{\delta}-D[\varphi]^R_{h}\right|^{q}\,dx\, dt\rightarrow0\quad\text{as }\delta\rightarrow0.\label{eq:modular lp}
\end{equation}
Since $([\varphi]^R_{h})_{\delta}$ is Lipschitz and compactly supported
in $U_{R, S}(z_0)$, it can be used as a test function in the weak formulation Definition~\ref{def:weak_solution}. Thus, we obtain the following.
\begin{equation}
\begin{aligned}
&\iint_{U_{R, S}(z_0)}-u\partial_{t}([\varphi]^R_{h})_{\delta}+A(z, u,  Du)\cdot D([\varphi]^R_{h})_{\delta}\, dx\, dt=0.\label{eq:regularized est}
\end{aligned}
\end{equation}
Now, observe that by H\"{o}lder's inequality and the growth condition \eqref{eq:growth_cond}, i.e., $|A(z, u, Du)|\leq L(|Du|^{p-1}+a(z)|Du|^{q-1}),$
\begin{align*}
 & \iint_{U_{R, S}(z_0)}A(z, u, Du)\cdot(D([\varphi]^R_{h})_{\delta}-D[\varphi]^R_{h})\, dx\, dt\\
 & \leq\left(\iint_{U_{R, S}(z_0)}\left|Du\right|^{p} \,dx\, dt\right)^{\frac{p-1}{p}}\left(\iint_{U_{R, S}(z_0)}\left|D([\varphi]^R_{h})_{\delta}-D[\varphi]^R_{h}\right|^{p} \,dx\, dt\right)^{\frac{1}{p}}\\
 & \phantom{\leq}+\left(\iint_{U_{R, S}(z_0)}a(z)\left|Du\right|^{q} \,dx\, dt\right)^{\frac{q-1}{q}}\left(\iint_{U_{R, S}(z_0)}a(z)\left|D([\varphi]^R_{h})_{\delta}-D[\varphi]^R_{h}\right|^{q} \,dx\, dt\right)^{\frac{1}{q}},
\end{align*}
where the right-hand side converges to zero as $\delta\rightarrow0$
by \eqref{eq:modular lp}. Furthermore, convergence of the time term
in \eqref{eq:regularized est} is clear since $[\varphi]^R_{h}$ is Lipschitz
in time. Thus, we arrive at
\begin{align*}
&\iint_{U_{R, S}(z_0)}\left(-u\partial_{t}[\varphi]^R_{h}+A(z, u, Du)\cdot D[\varphi]^R_{h}\right)\, dx\, dt=0.
\end{align*}
Computing the derivative of the Steklov average and using that $\varphi$
has compact support in time, we derive
\begin{align*}
&\iint_{U_{R, S}(z_0)}-u\partial_{t}[\varphi]^R_{h} \,dx\, dt \\
& =\int_{B_{R}(x_0)}\int_{t_0-S}^{t_0+S}u(x,t)\left(\frac{\varphi(x,t)-\varphi(x+h\zeta,t+h)}{h}+D[\varphi]_{h}^{R}\cdot\zeta\right)\,dt\,dx\\
 & =\int_{B_{R}(x_0)}\left[\frac{1}{h}\int_{t_0-S-h}^{t_0+S-h}-u(x-h\zeta,t-h)\varphi(x,t)\,dt+\int_{t_0-S}^{t_0+S}u(x,t)\varphi(x,t)\,dt\right]\, dx\\
 &+\iint_{U_{R, S}(z_0)}uD[\varphi]^R_h\cdot \zeta \, dx\, dt\\
 & =\int_{B_{R}(x_0)}\int_{t_0-S-h}^{t_0+S-h}\frac{1}{h}(u(x,t)-u(x-h \zeta,t-h))\varphi(x,t)\, dt\, dx
 +\iint_{U_{R, S}(z_0)}uD[\varphi]^R_h\cdot \zeta \, dx\, dt\\
 & =\iint_{U_{R, S-h}(z_0)}\left(\partial_{t}[u]^L_{h}-D[u]^L_h\cdot \zeta\right)\varphi\,dx\, dt+\iint_{U_{R, S-h}(z_0)}\frac{u(x+h\zeta, t-h)-u(x-h\zeta, t-h)}{h}\varphi(x, t)\,dx\, dt\\
 &+\iint_{U_{R, S}(z_0)}uD[\varphi]^R_h\cdot \zeta \, dx\, dt\\
 &=\iint_{U_{R, S}(z_0)}\partial_{t}[u]^L_{h}\varphi\,dx\, dt+\iint_{U_{R, S}(z_0)}\frac{\varphi(x-h\zeta, t-h)-\varphi(x+h\zeta, t-h)}{h}u(x, t)\,dx\, dt\\
 &+\iint_{U_{R, S}(z_0)}uD[\varphi]^R_h\cdot \zeta \, dx\, dt
\end{align*}
where in the last equality, we used the fact that $\varphi$ is compactly supported.
Hence, we arrive at the following.
\begin{align}\label{weak formulation_1}
&\iint_{U_{R, S}(z_0)}\left(\partial_{t}[u]^L_{h}-D[u]^L_h\cdot \zeta\right)\varphi\,dx\, dt+\iint_{U_{R, S}(z_0)}\frac{\varphi(x-h\zeta, t-h)-\varphi(x+h\zeta, t-h)}{h}u(x, t)\,dx\, dt\nonumber\\
 &+\iint_{U_{R, S}(z_0)}uD[\varphi]^R_h\cdot \zeta \, dz+\iint_{U_{R, S}(z_0)}A(z, u, Du)\cdot D[\varphi]^R_{h}\,dx\, dt=0
\end{align}

\textbf{Step 2:} Once we have the approximation, we can complete the proof by choosing the test function suitably. Let $\eta \in C^{\infty}_{0}(U_{R, S}(z_0))$ such that $0\leq \eta\leq 1, $ $\eta \equiv 1$ in $U_{r, \tau}(z_0),$ $|D\eta| \leq \frac{2}{R-r}$ and $|\partial_t\eta|\leq \frac{2}{S-\tau}.$ Further, we define
\begin{align*}
\beta^{\tau_1,\varepsilon}(t):=\begin{cases}
1, & t\leq\tau_1-\varepsilon,\\
1-(t-\tau_1+\varepsilon)/\varepsilon, & \tau_1-\varepsilon<t\leq\tau_1,\\
0, & t>\tau_1.
\end{cases}
\end{align*}
for $\varepsilon>0$ and $\tau_1 \in (t_0-\tau, t_0+\tau).$ Now we choose an admissible test function as
\[\varphi (x,t)=\eta^q (x, t) \beta^{\tau_1, \varepsilon}(t)\left(u-(u)_{U_{R, S}(z_0)}\right)(x, t)=\eta^q(x, t)\beta^{\tau_1, \varepsilon}(t)w(x, t).\] Note that $\varphi$ is in $W^{H}(U_{R, S}(z_0))$ and we need to test the equation from \eqref{weak formulation_1}, that is, 
\begin{align*}
&\mathrm{I}+\mathrm{II}+\mathrm{III}+\mathrm{IV}\\
&=\miint{U_{R, S}(z_0)}(\partial_{t}[u-(u)_{U_{R, S}(z_0)}]^L_{h}-D[u-(u)_{U_{R, S}(z_0)}]^L_h\cdot \zeta)\varphi(x, t)\, dx\, dt\\
&+\miint{U_{R, S}(z_0)}A(z, u, Du)\cdot D[\varphi]^{R}_{h}\,dx\, dt\\
&+\miint{U_{R, S}(z_0)}\frac{\varphi(x-h\zeta, t-h)-\varphi(x+h\zeta, t-h)}{h}u(x, t)\,dx\, dt+\miint{U_{R, S}(z_0)}uD[\varphi]^R_h\cdot \zeta \, dx\, dt=0
\end{align*}
First, using dominated convergence and properties of Steklov average, we note that
\begin{align}\label{eq: 1.16}
    &\lim_{h \to 0}\,\, (\mathrm{III}+\mathrm{IV})\\
    &=\miint{U_{R, S}(z_0)}-u D\varphi(x, t)\cdot \zeta\, dx\, dt\nonumber\\
    &=\miint{U_{R, S}(z_0)}Du \varphi(x, t)\cdot \zeta\, dx\, dt=\miint{U_{R, S}(z_0)}Dw(x, t) \eta^q (x, t) \beta^{\tau_1, \varepsilon}(t)w(x, t)\cdot \zeta\, dx\, dt
\end{align}
\noindent \textbf{Estimate of $\mathrm{I}$:} Recall from Lemma~\ref{lem:steklov lemma}, with $\zeta$ the vector in \eqref{eq:a increasing},
\begin{align}\label{equation L}
\partial_t[w]^L_h&=\tfrac1h\big(w(x,t)-w(x+h\zeta,t-h)\big)+\zeta\cdot D[w]^L_h. \tag{L}
\end{align} Using \eqref{equation L} and convexity, we have the following estimate.
\begin{align}\label{eq: 1.17}
   ( \partial_t[w]^L_h-D[w]^L_h\cdot \zeta)\, \varphi (x, t)&=\frac{1}{h}\eta^q(x, t)\beta^{\tau_1, \varepsilon}(t)w(x, t)\left(w(x, t)-w(x+h\zeta, t-h)\right)\nonumber\\
    &\geq \eta^q(x, t)\beta^{\tau_1, \varepsilon}(t)\frac{1}{2h}\left(w^2(x, t)-w^2(x+h\zeta, t-h)\right)\nonumber\\
    &=\frac{1}{2}\eta^q(x, t)\beta^{\tau_1, \varepsilon}(t)\partial_t[w^2]^L_h(x, t)-\underbrace{\frac{1}{2}\eta^q(x, t)\beta^{\tau_1, \varepsilon}(t)\zeta \cdot D[w^2]^L_h(x, t)}_{\mathrm {J}_h(x, t)}.\nonumber\\
\end{align}
Now, from \eqref{eq: 1.16}, we note that
\begin{align*}
    \lim_{h\to 0}\miint{U_{R, S}(z_0)} \mathrm{J}_h(x, t)\, dx\, dt=\miint{U_{R, S}(z_0)}Dw(x, t) \eta^q (x, t) \beta^{\tau_1, \varepsilon}(t)w(x, t)\cdot \zeta\, dx\, dt
\end{align*}
and hence
\begin{align*}
    \lim_{h\to 0}\miint{U_{R, S}(z_0)}\mathrm{J}_h(x,t) \, dx\, dt=\lim_{h\to 0}\,\,(\mathrm{III}+\mathrm{IV}).
\end{align*}
Hence, it is enough to concentrate on the rest of $\mathrm{I}$ in \eqref{eq: 1.17}.
Integrating by parts, we find 
\begin{align}\nonumber
\mathrm{I}&\geq \miint{U_{R,S}(z_0)} \frac{1}{2}\partial_t[w^2]^L_h(x, t)\eta^{q}(x, t)\beta^{\tau_1, \varepsilon}(t)\, dx\, dt\\\label{eq : estimate of I in Lemma 3.1}
&=-\miint{U_{R,S}(z_0)} \frac{q}{2}[w^2]^L_h(x, t)\eta^{q-1}\partial_t \eta(x, t)\beta^{\tau_1, \varepsilon}(t) \;dx\, dt\\\nonumber
&\qquad-\miint{U_{R,S}(z_0)} \frac{1}{2}[w^2]^L_h(x, t) \eta^q(x, t) \partial_t \beta^{\tau_1, \varepsilon}(t) \; dx\, dt.
    \end{align}
    For the first term on the right-hand side of \eqref{eq : estimate of I in Lemma 3.1}, it follows from the definition of $\eta$ and $\beta^{\tau_1, \varepsilon}$ and the Lemma \ref{lem:steklov lemma} that
    $$
    -\miint{U_{R,S}(z_0)} [w^2]^L_h(x, t)\eta^{q-1}\partial_t\eta(x, t)\beta^{\tau_1, \varepsilon}(t)\;dx\, dt\geq -c \miint{U_{R,S}(z_0)}\frac{|u-(u)_{U_{R,S}(z_0)}|^2}{S-\tau} \; dx\, dt.
    $$
    Next, by the definition of $\beta^{\tau_1, \varepsilon}$ and Lemma \ref{lem:steklov lemma}, we get 
    \begin{align*}
        -\miint{U_{R,S}(z_0)} \frac{1}{2} [w^2]^L_h (x, t)\eta^q (x, t)\partial_t \beta^{\tau_1, \varepsilon} \, dx\, dt &=\frac{1}{|U_{R, S}|}\dashint_{\tau_1-\varepsilon}^{\tau_1}\int_{B_R(x_0)}\frac{1}{2} |u-(u)_{U_{R, S}(z_0)}|^2 \eta^q \,dx \,dt\\
        & \geq \frac{1}{|U_{R, S}|}\dashint_{\tau_1-\varepsilon}^{\tau_1}\int_{B_r(x_0)} \frac{1}{2} |u-(u)_{U_{R, S}(z_0)}|^2\, dx \,dt.
    \end{align*}
    Thus, passing to the limit as $\varepsilon \to 0,$ we obtain
    \begin{equation}\label{eq: final estimate of I in Lemma 3.1}
        \begin{aligned}
             \mathrm{I} &\geq -c\miint{U_{R,S}(z_0)}\frac{|u-(u)_{U_{R,S}(z_0)}|^2}{S-\tau} \; dx\, dt +\frac{1}{2|U_{R,S}|}\int_{B_r(x_0)} |u(x,\tau)-(u)_{U_{R, S}(z_0)}|^2\; dx.
        \end{aligned}
    \end{equation}
\noindent \textbf{Estimate of $\mathrm{II}$:} It follows from the definition of $\varphi$ and Lemma \ref{lem:steklov lemma} that
    \begin{align}\nonumber
        \mathrm{II}&=\miint{U_{R,S}(z_0)}A(z, u,  Du)\cdot D[\varphi]^R_h(x, t)\, dx\,dt\\ 
        &=\miint{U_{R,S}(z_0)}A(z, u,  Du)\beta^{\tau_1, \varepsilon}(t)\cdot D(w \eta^q (x, t))\, dx\, dt\nonumber \\
        &=\miint{U_{R,S}(z_0)}A(z, u,  Du)\beta^{\tau_1, \varepsilon}(t)\cdot\left(Du \eta^q(x, t)+qwD\eta \eta^{q-1}(x, t)\right)\, dx\, dt.\label{eq : estimate of II in Lemma 3.1}
    \end{align}
For the first term of \eqref{eq : estimate of II in Lemma 3.1}, passing to the limit as $\varepsilon \to 0,$ we deduce from \eqref{eq:growth_cond} that
    \begin{align*}
        &\lim_{\varepsilon \rightarrow 0^+} \miint{U_{R,S}(z_0)} A(z, u, Du)\cdot Du\eta^q \beta^{\tau_1, \varepsilon} \;dx\, dt \geq \frac{\nu}{|U_{R, S}|}\int_{I_S(t_0)\cap(-\infty,\tau_1)}\int_{B_R(x_0)} H(z,|Du|)\eta^q \; dx\, dt.
    \end{align*}
    Next, to estimate the second term in \eqref{eq : estimate of II in Lemma 3.1}, we apply \eqref{eq:growth_cond}, the definition of $\eta$ and Lemma \ref{lem:steklov lemma} to find that 
    \begin{align*}
        &\lim_{\varepsilon \rightarrow 0^+} q\miint{U_{R,S}(z_0)} A(z, u, Du)\cdot w D\eta \eta^{q-1}\beta^{\tau_1, \varepsilon}(t) \; dx\, dt\\
        &\quad \geq -\frac{Lq}{|U_{R, S}|}\int_{I_S(t_0)\cap(-\infty,\tau_1)}\int_{B_R(x_0)} \left(|Du|^{p-1}+a(z)|Du|^{q-1}\right)\eta^{q-1}\frac{|u-(u)_{U_{R, S}(z_0)}|}{R-r}\; dx dt\\
        &\geq -\frac{Lq \upepsilon}{|U_{R, S}|}\int_{I_S(t_0)\cap(-\infty,\tau_1)}\int_{B_R(x_0)}\left(|Du|^p+a(z)|Du|^q\right)\eta^q(x, t)\, dx\, dt \\
        &-c(\upepsilon)\frac{Lq }{|U_{R, S}|}\int_{I_S(t_0)\cap(-\infty,\tau_1)}\int_{B_R(x_0)}\left(\frac{|u-(u)_{U_{R, S}(z_0)}|^p}{(R-r)^p}+a(z)\frac{|u-(u)_{U_{R, S}(z_0)}|^q}{(R-r)^q}\right)\,dx\, dt.
    \end{align*}
    Choosing $\upepsilon=\frac{\nu}{8Lq}$ we get
    \begin{align*}
       &\lim_{\varepsilon \rightarrow 0^+} q\miint{U_{R,S}(z_0)} A(z,u, Du)\cdot w D\eta \eta^{q-1}\beta^{\tau_1, \varepsilon}(t) \; dx\, dt\\
        &\quad \ge \frac{q\nu}{8|U_{R, S}|}\int_{I_S(t_0)\cap(-\infty,\tau_1)} \int_{B_R(x_0)} H(z,|Du|)\eta^q(x, t) \;dx dt -c\miint{U_{R,S}(z_0)} H\left(z,\frac{|u-(u)_{U_{R, S}(z_0)}|}{R-r}\right) \,dx \,dt
    \end{align*}
    for some constant $c>0$ depending on $p,q,\nu$ and $L.$
    It follows that 
    \begin{equation}\label{eq: final estimate of II in Lemma 3.1}
        \begin{aligned}
            \lim_{\varepsilon \rightarrow 0^+} \mathrm{II} &\geq \frac{7\nu}{8|U_{R, S}|} \int_{I_S(t_0)\cap(-\infty,\tau_1)} \int_{B_R(x_0)} H(z,|Du|)\eta^q(x, t) \;dx dt\\
            &\quad -c\miint{U_{R,S}(z_0)} H\left(z,\frac{|u-(u)_{U_{R, S}(z_0)}|}{R-r}\right) \,dx\, dt.
        \end{aligned}
    \end{equation}
    Combining \eqref{eq: final estimate of I in Lemma 3.1} and \eqref{eq: final estimate of II in Lemma 3.1} gives
    \begin{align*}
        &\frac{1}{|U_{R, S}|}\int_{B_r(x_0)} |u(x,\tau)-(u)_{U_{R, S}(z_0)}|^2\; dx\\
        &\qquad +\frac{1}{|U_{R, S}|} \int_{I_S(t_0)\cap(-\infty,\tau_1)} \int_{B_R(x_0)} H(z,|Du|)\eta^q(x, t) \;dx dt\\
        &\quad \leq c\miint{U_{R,S}(z_0)} H\left(z,\frac{|u-(u)_{U_{R, S}(z_0)}|}{R-r}\right) \,dx\,dt+ c \miint{U_{R,S}(z_0)}\frac{|u-(u)_{U_{R,S}(z_0)}|^2}{S-\tau} \; dx\,dt
    \end{align*}
    where $c$ depends only on $p,q,\nu$ and $L$. As $\tau_1\in I_\tau(t_0)$ is arbitrary, $\frac{R}{2}\leq r <R$ and $\frac{S}{4}\leq \tau < S$, we have
    \begin{align*}
        &\sup_{t\in (t_0-\tau,t_0+\tau)}\dashint_{B_r(x_0)} \frac{|u-(u)_{U_{R, S}(z_0)}|^2}{\tau}\; dx + \miint{U_{r,\tau}(z_0)} H(z,|Du|) \;dx\, dt\\
        &\qquad \leq c\miint{U_{R,S}(z_0)} H\left(z,\frac{|u-(u)_{U_{R, S}(z_0)}|}{R-r}\right)\; dx\, dt+ c \miint{U_{R,S}(z_0)}\frac{|u-(u)_{U_{R,S}(z_0)}|^2}{S-\tau} \; dx\, dt
    \end{align*}
    for some constant $c>0$ depending on $N,p,q,\nu$ and $L$. Also, we note that
    \begin{equation*}
        \sup_{t\in (t_0-\tau,t_0+\tau)}\dashint_{B_r(x_0)} \frac{|u-(u)_{U_{r, \tau}(z_0)}|^2}{\tau}\; dx \leq 2\sup_{t\in (t_0-\tau,t_0+\tau)}\dashint_{B_r(x_0)} \frac{|u-(u)_{U_{R, S}(z_0)}|^2}{\tau}\; dx.
    \end{equation*}
     This completes the proof.
\end{proof}  
\subsection{H\"older regularity for \eqref{eq:main_double} with $a\in \mathcal{Z}^{\kappa}(\Omega_T)$} In this section, we revisit the necessary lemmas from \cite{Kim2025holder} to prove H\"older regularity for \eqref{eq:main_double} as \cite{Kim2025holder} considered the coefficient $a \in C^{\alpha, \frac{\alpha}{2}}_{\mathrm{loc}}(\Omega_T)$ with $\alpha \in (0, 1].$
\begin{lemma}[Caccioppoli inequality II]\label{Caccio 1}
Let $u$ be a weak solution to \eqref{eq:main_double} and $a$ satisfy \eqref{eq:Z_kappa} and \eqref{almost increasing for a}. Suppose $U_{R, S}(z_0)\subset\Omega_T$ with $R,S>0$ and $S<R^2$. Let $\varphi$ be a Lipschitz continuous function vanishing on $\partial B_R \times (t_0-S,t_0+S)$ with $0\le \varphi\le 1$. 
Then for $k \in \RR$ with $|k| \leq \|u\|_\infty$, there exists a constant $c(\texttt{data})$ such that
	\begin{align*}
		\begin{split}
			& \sup_{t\in (t_0-S,t_0+S)}\int_{B_{R}(x_0)}  (u-k)_{\pm}^2\varphi^q\, dx 
            + \iint_{U_{R,S}(z_0)} (|D ((u-k)_\pm\varphi^\frac{q}{p})|^p+a(z)|D ((u-k)_\pm\varphi)|^q)\, dx dt  \\
			&\qquad\le c\iint_{U_{R,S}(z_0)} ((u-k)_\pm^p (|D \varphi|^p+R^\kappa |D \varphi|^{q} ) +a(z_0)(u-k)_\pm^q |D \varphi|^q )\, dx dt\\
			&\qquad+c\iint_{U_{R,S}(z_0)} (u-k)_\pm^2 |\partial_t \varphi^q| \, dx dt + c\int_{B_R(x_0) \times \{t_0-S\}}  (u-k)_\pm^2\varphi^q \, dx.
		\end{split}
	\end{align*}
\end{lemma}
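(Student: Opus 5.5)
We will prove Lemma~\ref{Caccio 1} by repeating the argument of Lemma~\ref{lem: energy inequality}. The only change is the test function: instead of $\eta^q\beta^{\tau_1,\varepsilon}(u-(u)_{U_{R,S}})$ we use $\pm\varphi^q\beta^{\tau_1,\varepsilon}(u-k)_\pm$, suitably regularised.

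\textbf{Step 1 (justifying the test function).} We first multiply $\varphi$ by a cut-off in time supported near $t_0-S$, and later send this cut-off to the lower end. This produces the initial-time term $\int_{B_R\times\{t_0-S\}}(u-k)_\pm^2\varphi^q\,dx$; the continuity $u\in C((0,T];L^2)$ lets us pass to the limit. The test function is admissible: it is continuous because $u$ is H\"older continuous, it lies in $W^H$, and it has compact support after the time cut-off. We then argue exactly as in Step~1 of Lemma~\ref{lem: energy inequality}. We use the slanted Steklov averages $[\cdot]^R_h$, $[\cdot]^L_h$ together with Lemma~\ref{lem:steklov lemma}, which needs \eqref{almost increasing for a}, and the space mollification of Lemma~\ref{lem: spcae approximation}, which needs the spatial part of \eqref{eq:Z_kappa}. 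This gives the regularised identity \eqref{weak formulation_1} with this $\varphi$. For the time term we use convexity of $s\mapsto s_\pm^2$: since $(u-k)_\pm(u-u')\ge \frac12\bigl((u-k)_\pm^2-(u'-k)_\pm^2\bigr)$, the same lower bound as in \eqref{eq: 1.17} holds with $w^2$ replaced by $(u-k)_\pm^2$. The drift terms $\mathrm{III}+\mathrm{IV}$ cancel the $\zeta$-term $\mathrm J_h$ in the limit $h\to0$, as before. Integrating by parts against $\varphi^q\beta^{\tau_1,\varepsilon}$ then gives the sup term plus $\iint(u-k)_\pm^2|\partial_t\varphi^q|$.

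\textbf{Step 2 (the diffusion term).} Write $A\cdot D((u-k)_\pm\varphi^q)=\varphi^q A\cdot D(u-k)_\pm+q\varphi^{q-1}(u-k)_\pm A\cdot D\varphi$. By \eqref{eq:growth_cond} the first part controls $\varphi^q(|D(u-k)_\pm|^p+a|D(u-k)_\pm|^q)$. In the second part we apply Young's inequality separately to the $p$-term and the $q$-term:
\[
|Du|^{p-1}\varphi^{q-1}(u-k)_\pm|D\varphi|\le \epsilon\,\varphi^q|Du|^p+c\,\varphi^{q-p}(u-k)_\pm^p|D\varphi|^p,
\]
\[
a|Du|^{q-1}\varphi^{q-1}(u-k)_\pm|D\varphi|\le \epsilon\, a\varphi^q|Du|^q+c\,a(u-k)_\pm^q|D\varphi|^q.
\]
Since $\varphi\le1$ and $q\ge p$, the factor $\varphi^{q-p}$ is at most $1$. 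Absorbing the $\epsilon$-terms, we recover the gradients of the products from $D((u-k)_\pm\varphi^{q/p})$ and $D((u-k)_\pm\varphi)$ by the product rule and the elementary bounds $\varphi^{q}\le\varphi^{p}$, at the cost of terms $(u-k)_\pm^p|D\varphi|^p$ and $a(u-k)_\pm^q|D\varphi|^q$.

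\textbf{Step 3 (replacing $a(z)$ by $a(z_0)$).} This is the main obstacle. By \eqref{eq:Z_kappa} and $S<R^2$, for $z\in U_{R,S}(z_0)$ we have $a(z)\le c_a\bigl(a(z_0)+R^\kappa\bigr)$. Hence
\[
a(z)(u-k)_\pm^q|D\varphi|^q\le c\,a(z_0)(u-k)_\pm^q|D\varphi|^q+c\,R^\kappa(u-k)_\pm^q|D\varphi|^q .
\]
For the last term we use $(u-k)_\pm^q=(u-k)_\pm^{q-p}(u-k)_\pm^p\le(2\|u\|_\infty)^{q-p}(u-k)_\pm^p$, which holds because $|k|\le\|u\|_\infty$. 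This gives $c\,R^\kappa(u-k)_\pm^p|D\varphi|^q$ with $c$ depending on $\|u\|_\infty$, which enters through $\widetilde M$ in \texttt{data}. The delicate point is making sure that only the spatial regularity and the monotonicity \eqref{almost increasing for a} enter the approximation step, so that the time part of \eqref{eq:Z_kappa} is used only in this final pointwise bound. Finally, letting $\varepsilon\to0$, taking the supremum over $\tau_1$, and adding the $\pm$ cases yields the claim.
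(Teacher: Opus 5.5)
Your proposal is correct and follows the paper's argument. The paper uses the same test function $\varphi^q\tau_{t_1,\delta}(u-k)_\pm$, and you add a lower time cut-off to produce the initial-time term. The time term is treated with the slanted Steklov averages exactly as in Lemma~\ref{lem: energy inequality}; like the paper, this relies on the a priori continuity of $u$ in the approximation lemmas. Finally, \eqref{eq:Z_kappa} with $S<R^2$ gives $a(z)\le c_a(a(z_0)+R^\kappa)$, so you are writing out the details the paper delegates to \cite[Lemma 3.1]{Kim2025holder}.

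One bookkeeping slip: the factor $(2\|u\|_\infty)^{q-p}$ makes $c$ depend genuinely on $\|u\|_\infty$, as for the constants in Theorem~\ref{holder cts thm}. This has nothing to do with $\widetilde M$, which is the free parameter in \eqref{eq: p-phase condition}.
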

\begin{proof}
The proof of this lemma follows similar lines to the energy estimate above combined with the property \eqref{eq:Z_kappa}. We define $\tau_{t_1, \delta} \in C^{0, 1}(\RR)$ for $t_0-S<t_1<t_0+S$ and $\delta \in (0, S)$ by 
\begin{align*}
    \tau_{t_1, \delta}=\begin{cases}
        1\quad &\text{for} \quad t \leq t_1-\delta\\
        \frac{t_1-t}{\delta}\quad &\text{for} \quad t_1-\delta<t\leq t_1\\
        0\quad &\text{for} \quad t \geq t_1.
    \end{cases}
\end{align*}
and choose the admissible test function as 
\begin{align*}
    \xi(x, t)=\varphi(x, t)^q\tau_{t_1, \delta}(t)[u-k]_{\pm}(x, t).
\end{align*}
Now the time term can be handled as previously, and following the proof of \cite[Lemma 3.1]{Kim2025holder}, we can complete the proof.
\end{proof}

Next, we prove \cite[Lemma 4.1 ]{Kim2025holder} under the assumption that $a\in \mathcal{Z}^{\kappa}(\Omega_T)$. The proof is similar, but we include it for the sake of completeness.
 \begin{lemma}
    Let $K_1=\max \{1, 2^{\kappa+1}c_a\|u\|_\infty^{q-p}\}$. If $K_1\bigl( \tfrac{\omega}{\varrho}\bigr)^p<a(z_0)\bigl( \tfrac{\omega}{\varrho}\bigr)^q$ and $q\leq p+\kappa$ holds, then we have
    $$
        \frac{a(z_0)}{2c_a}\le a(z)\le 2c_a a(z_0)\quad\text{for all}\quad z\in Q_{\varrho}(z_0).
    $$
\end{lemma}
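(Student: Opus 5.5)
The plan is to use the $\mathcal{Z}^{\kappa}$ condition \eqref{eq:Z_kappa} in both directions, with $z_1,z_2\in\{z,z_0\}$. The hypothesis will give a lower bound for $a(z_0)$ in terms of $\varrho^{\kappa}$, so that the additive error term $d_p(z,z_0)^\kappa$ can be absorbed into $a(z_0)$. Here $\omega$ is understood as an oscillation-type quantity bounded by $2\|u\|_\infty$; I would use $\omega\le 2\|u\|_\infty$, or $\omega \le \|u\|_\infty$ if that is the normalization in \cite{Kim2025holder}. The constant $K_1$ is designed precisely for this absorption.

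\textbf{Step 1: lower bound on $a(z_0)$.} From $K_1(\omega/\varrho)^p<a(z_0)(\omega/\varrho)^q$ we get
\[
a(z_0)>K_1\Bigl(\frac{\varrho}{\omega}\Bigr)^{q-p}=K_1\,\omega^{p-q}\varrho^{q-p}.
\]
Since $q-p\ge0$ and $\omega\lesssim\|u\|_\infty$, we have $\omega^{p-q}\ge c\|u\|_\infty^{p-q}$. Hence $a(z_0)>2^{\kappa+1}c_a\varrho^{q-p}$, up to the power of $2$ coming from $\omega\le2\|u\|_\infty$, which I would track and fold into $K_1$. Next I need $\varrho^{q-p}\ge\varrho^{\kappa}$. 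This follows from $q-p\le\kappa$ together with $\varrho\le1$; the latter is implicit for small cylinders, and otherwise one normalizes $\varrho\le 1$ as in the source. This gives $a(z_0)\ge 2^{\kappa+1}c_a\varrho^\kappa$. I expect this step to be the main obstacle: the interplay of $\omega$, $\|u\|_\infty$ and $\varrho\le1$ must be arranged so that exactly the factor $2^{\kappa+1}c_a$ appears.

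\textbf{Step 2: comparison on $Q_\varrho(z_0)$.} For $z\in Q_\varrho(z_0)$ we have $d_p(z,z_0)\le\varrho$ (at most $2\varrho$ if the metric is normalized differently, which the factor $2^\kappa$ covers). For the upper bound, \eqref{eq:Z_kappa} gives
\[
a(z)\le c_a(a(z_0)+\varrho^\kappa)\le c_a\bigl(a(z_0)+\tfrac{a(z_0)}{2c_a}\bigr)\le 2c_aa(z_0).
\]
For the lower bound, \eqref{eq:Z_kappa} with the roles swapped gives $a(z_0)\le c_a(a(z)+\varrho^\kappa)\le c_aa(z)+\tfrac12a(z_0)$. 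Rearranging yields $a(z)\ge a(z_0)/(2c_a)$, which completes the proof.
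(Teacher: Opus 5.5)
Your proposal is correct and is essentially the paper's argument. The only difference in route is that the paper phrases Step 1 as a contradiction showing $\inf_{Q_\varrho(z_0)}a\ge\varrho^\kappa$ and then compares $a(z)$ and $a(z_0)$ through the infimum; it works under the same standing assumptions $\varrho<1$ and $\omega\le 2\|u\|_\infty$.

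Your Step 1 worry also resolves cleanly, and nothing needs to be folded into $K_1$. With $\omega\le 2\|u\|_\infty$, $q-p\le\kappa$ and $\varrho\le 1$, the hypothesis gives
\[
a(z_0)>2^{\kappa+1+p-q}c_a\varrho^{q-p}\ge 2c_a\varrho^\kappa,
\]
and this is exactly the bound your Step 2 uses. Your normalization caveat about the metric is likewise unnecessary, since $d_p(z,z_0)<\varrho$ holds on $Q_\varrho(z_0)$.
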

\begin{proof}
First, we claim that 
\begin{align}\label{claim_infa(z)}
\inf_{Q_{\varrho}(z_0)}a(z)\geq \varrho^{\kappa}\quad  \text{for } \kappa \in (0, \infty).
\end{align} 
Suppose not, then it holds $\inf_{Q_{\varrho}(z_0)}a(z)< \varrho^{\kappa}$ and using the assumption $K_1\left(\frac{\omega}{\varrho}\right)^p\leq a(z_0)\left(\frac{\omega}{\varrho}\right)^q,$ we get
\begin{align*}
    K_1\left(\frac{\omega}{\varrho}\right)^p\leq a(z_0)\left(\frac{\omega}{\varrho}\right)^q\leq c_a\left(\inf_{Q_{\varrho}(z_0)}a(z)+\varrho^{\kappa}\right)\left(\frac{\omega}{\varrho}\right)^q< 2c_a \varrho^{\kappa}\left(\frac{\omega}{\varrho}\right)^{q}.
\end{align*}
Thus, for $\varrho<1,$ we have
\begin{align*}
    2^{\kappa+1}c_a||u||^{q-p}_{\infty} \leq K_1< 2c_{a}\omega^{q-p}\varrho^{\kappa+p-q}\leq 2^{q-p+1}c_{a}||u||^{q-p}_{\infty}
\end{align*}
which is a contradiction. Therefore, using \eqref{claim_infa(z)}, we obtain
\begin{align*}
    a(z)\leq c_a\left(\inf_{Q_{\varrho}(z_0)}a(z)+\varrho^{\kappa}\right)\leq 2c_a \inf_{Q_{\varrho}(z_0)}a(z)\leq 2c_a a(z_0),
\end{align*}
which completes the proof.
\end{proof} 
Now we can complete the proof of H\"older regularity of \eqref{eq:main_double} as in \cite{Kim2025holder} and below we state the main result. To do so, let us define the $q$-parabolic distance. For a compact subset $\mathcal{C}\subset \Omega_T$, we denote an intrinsic  $q$-parabolic distance to the parabolic boundary $ \Gamma  = (\Omega \times \{0\}) \cup (\partial \Omega \times [0,T))$ by
\begin{align*}       
\text{dist}_q(\mathcal{C},\Gamma)=\inf_{\substack{(x,t) \in \mathcal{C}, \,\, (y,s) \in \Gamma  }} \left\{ |x-y| +(1+\|a\|_\infty)^\frac{1}{q} \|u\|_\infty^{\frac{q-2}{q}} |t-s|^\frac{1}{q} \right\},
\end{align*}
 
where we denoted $\|a\|_{\infty}=\|a \|_{L^\infty(\Omega_T)}$ and $\|u\|_{\infty}=\|u \|_{L^\infty(\Omega_T)}$.

\begin{theorem}[\cite{Kim2025holder}, Theorem 1.1]\label{holder cts thm}
Let $u$ be a bounded weak solution to \eqref{eq:main_double} according to Definition~\ref{def:weak_solution} such that~\eqref{eq:growth_cond} is in force. Moreover, the coefficient $a$ satisfies \eqref{eq:Z_kappa} and \eqref{almost increasing for a}. Then $u$ is locally H\"older continuous in $\Omega_T$. Moreover, there exist $c_H > 0$ and $\gamma \in[0,1)$ depending only on $N,p,q,\kappa, \nu, L, c_a$ and $\|u\|_\infty$ such that for any compact set $\mathcal{C} \subset \Omega_T$
     $$
     |u(x_1,t_1)-u(x_2,t_2)|\le c_H\left(\frac{|x_1-x_2|+(1+\|a\|_\infty)^\frac{1}{q}\|u\|_{\infty}^{\frac{q-2}{q}}|t_1-t_2|^\frac{1}{q} }{\min\{1,\dist_q(\mathcal{C}, \Gamma ) \}}\right)^\gamma
     $$
     holds for every pair of points $(x_1,t_1),(x_2,t_2)\in \mathcal{C}$. 
\end{theorem}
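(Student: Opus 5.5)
The statement is a H\"older continuity result for bounded weak solutions, taken from \cite{Kim2025holder} but with the coefficient now only assumed to lie in $\mathcal{Z}^{\kappa}(\Omega_T)$ and to satisfy \eqref{almost increasing for a}, instead of $a\in C^{\alpha,\alpha/2}$. The plan is therefore not to build a new argument. I will rerun the intrinsic De Giorgi scheme of \cite{Kim2025holder} and check that every place where H\"older continuity of $a$ entered can be handled by \eqref{eq:Z_kappa} together with the two preceding lemmas.

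\textbf{Step 1: the energy estimate.} This is Lemma~\ref{Caccio 1}. Its derivation needs admissible test functions of the form $\varphi^q\tau_{t_1,\delta}(u-k)_\pm$. These are justified by the space approximation (Lemma~\ref{lem: spcae approximation}) and the slanted Steklov time approximation (Lemma~\ref{lem:steklov lemma}), which use only \eqref{almost increasing for a} and the spatial part of \eqref{eq:Z_kappa}.

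In the $a$-dependent term, $a(z)$ is replaced by $a(z_0)$ plus an error. On $U_{R,S}$ with $S<R^2$ we have $d_p(z,z_0)\le R$, so \eqref{eq:Z_kappa} gives $a(z)\le c_a(a(z_0)+R^\kappa)$. The error $R^\kappa(u-k)_\pm^q|D\varphi|^q$ is then absorbed into $R^\kappa(u-k)_\pm^p|D\varphi|^q$, using boundedness of $u$ and $|k|\le\|u\|_\infty$. This yields exactly the form stated in Lemma~\ref{Caccio 1}.

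\textbf{Step 2: the phase dichotomy.} Following \cite{Kim2025holder}, I work with the oscillation $\omega$ on a cylinder of radius $\varrho$ and split into two regimes:
\begin{itemize}
\item the $p$-phase, $K_1(\omega/\varrho)^p\ge a(z_0)(\omega/\varrho)^q$;
\item the $(p,q)$-phase, the reverse inequality.
\end{itemize}

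In the $p$-phase, on intrinsic cylinders with time scale $\omega^{2-p}\varrho^p$, the $a$-terms in the energy estimate are dominated by $p$-terms. For the $a(z_0)$ contribution this follows from the phase condition. For the $R^\kappa$ contribution it follows from $q\le p+\kappa$, which gives $\varrho^\kappa(\omega/\varrho)^q\lesssim\|u\|_\infty^{q-p}(\omega/\varrho)^p$ for $\varrho\le1$. So the usual degenerate $p$-Laplacian De Giorgi and expansion-of-positivity machinery applies verbatim.

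In the $(p,q)$-phase, the preceding lemma gives $a(z)\simeq a(z_0)$ on $Q_\varrho(z_0)$. The equation then behaves like one with coefficient frozen up to the constant $2c_a$, and the intrinsic cylinders $G$ are used. This lemma is exactly the replacement for the H\"older-continuity-of-$a$ argument in \cite[Lemma~4.1]{Kim2025holder}.

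\textbf{Step 3: iteration.} I iterate the oscillation reduction $\omega_{n+1}\le\eta\,\omega_n$ over nested cylinders whose time scales are dictated by $\omega_n$ and the current phase. The worst case is the time scale $\varrho^q\omega^{2-q}(1+\|a\|_\infty)^{-1}$, and from this the $q$-parabolic distance $\dist_q$ and the H\"older estimate in the stated form follow by the standard covering argument.

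\textbf{Main obstacle.} The delicate point is the gap condition. The lemma uses $q\le p+\kappa$, whereas the standing assumption is only $q\le p+q\kappa/(q-2\gamma)$. Since $\gamma$ is not yet known at this stage, I would simply impose $q\le p+\kappa$ (which is implied by the stated gap when $\gamma=0$) for the H\"older step.

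The other point requiring care is that the phase alternatives must be compatible across scales. When $\varrho$ shrinks, a $(p,q)$-phase cylinder may become $p$-phase. This is handled exactly as in \cite{Kim2025holder}, since their switching argument only uses the lower bound $\inf_{Q_\varrho}a\ge\varrho^\kappa$, which the lemma provides.
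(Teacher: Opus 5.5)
Your proposal follows the paper's route. Like the paper, you re-run the scheme of \cite{Kim2025holder} and replace H\"older continuity of $a$ by exactly two ingredients: Lemma~\ref{Caccio 1}, justified via Lemmas~\ref{lem: spcae approximation} and~\ref{lem:steklov lemma}, and the comparability lemma giving $a(z_0)/(2c_a)\le a(z)\le 2c_a a(z_0)$ on $Q_\varrho(z_0)$ in the $(p,q)$-phase. Your explicit assumption $q\le p+\kappa$ is precisely what that lemma requires; the paper's proof uses it even though the theorem statement omits it. One small correction: the space approximation lemma does not rely only on the spatial part of \eqref{eq:Z_kappa}. At this stage $u$ is merely bounded, so that lemma must be applied with $\gamma=0$, and then it also needs $q\le p+\kappa$.
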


\subsection{Further auxiliary lemmas} Here we collect some more auxiliary lemmas that will be used throughout the paper. The following lemma presents a Gagliardo–Nirenberg inequality, which will play a key role in establishing reverse H\"{o}lder's inequalities.
\begin{lemma}[\cite{Hasto_2021}, Lemma 2.12]\label{lem : lemma 2.1}
For an open ball $B\subset \RR^N$, let $\psi \in W^{1, s}(B)$ and take $\sigma, s, r \in[1, \infty)$ and $\vartheta \in(0,1)$ such that
    $$
    -\frac{N}{\sigma} \leq \vartheta\left(1-\frac{N}{s}\right)-(1-\vartheta) \frac{N}{r}.
    $$
    Then we have
    $$
    \dashint_{B} \frac{|\psi|^\sigma}{\rho^\sigma} \, d x \leq c\left(\dashint_{B}\left(\frac{|\psi|^s}{\rho^s}+|D\psi|^s\right) d x\right)^{\frac{\vartheta \sigma}{s}}\left(\dashint_{B} \frac{|\psi|^r}{\rho^r} d x\right)^{\frac{(1-\vartheta) \sigma}{r}}
    $$
    for some constant $c=c(N, \sigma)$.
\end{lemma}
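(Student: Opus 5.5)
The plan is to reduce everything to the unit ball by scaling, and then combine a Sobolev--Poincar\'e embedding with H\"older interpolation between Lebesgue norms. Here $\rho$ is the radius of $B = B_\rho(x_0)$. Set $\tilde\psi(y) := \psi(x_0+\rho y)/\rho$ on $B_1$. Then $|D\tilde\psi(y)| = |D\psi(x_0+\rho y)|$, and every averaged integral in the statement turns into the same averaged integral of $\tilde\psi$ over $B_1$ with $\rho = 1$. Both sides are invariant under this change of variables, so it suffices to prove
\[
\Big(\dashint_{B_1}|\psi|^\sigma\,dx\Big)^{\frac1\sigma}\le c\,\Big(\dashint_{B_1}(|\psi|^s+|D\psi|^s)\,dx\Big)^{\frac{\vartheta}{s}}\Big(\dashint_{B_1}|\psi|^r\,dx\Big)^{\frac{1-\vartheta}{r}} .
\]

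Dividing the hypothesis by $-N$ rewrites it as
\[
\frac1\sigma\ \ge\ \vartheta\Big(\frac1s-\frac1N\Big)+\frac{1-\vartheta}{r}.
\]
I will choose an auxiliary exponent $t\in[1,\infty]$ with two properties. First, $W^{1,s}(B_1)\hookrightarrow L^t(B_1)$, with $\|\psi\|_{L^t(B_1)}\le c\,(\|\psi\|_{L^s(B_1)}+\|D\psi\|_{L^s(B_1)})$. Second, $\frac{\vartheta}{t}+\frac{1-\vartheta}{r}\le\frac1\sigma$. In the subcritical case $s<N$ I take $1/t = 1/s-1/N$, which is the Sobolev exponent. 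In the case $s>N$ I take $t=\infty$ and use Morrey's embedding on the ball. In both cases the second property is exactly the hypothesis, because $1/s-1/N\le 0$ when $s>N$.

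Next, define $\sigma^*$ by $1/\sigma^* = \vartheta/t+(1-\vartheta)/r$, so that $\sigma\le\sigma^*$. Jensen's inequality on the probability measure $dx/|B_1|$ gives $(\dashint|\psi|^\sigma)^{1/\sigma}\le(\dashint|\psi|^{\sigma^*})^{1/\sigma^*}$. The standard interpolation inequality, which is H\"older with exponents $\frac{t}{\vartheta\sigma^*}$ and $\frac{r}{(1-\vartheta)\sigma^*}$, then gives
\[
\|\psi\|_{\sigma^*}\le\|\psi\|_{t}^{\vartheta}\,\|\psi\|_{r}^{1-\vartheta},
\]
where all norms are averaged norms on $B_1$. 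Inserting the embedding from the previous step and raising the result to the power $\sigma$ yields the claim.

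I expect the main obstacle to be the borderline case $s=N$, where $W^{1,N}\not\hookrightarrow L^\infty$. If the hypothesis holds with strict inequality, I will pick a large finite $t$, use $W^{1,N}(B_1)\hookrightarrow L^t(B_1)$ for every $t<\infty$, and exploit the slack in the condition. If equality holds, I would first check whether this case is excluded in the cited source, and otherwise treat it by a separate argument, for instance via a Trudinger-type embedding, so I am not fully certain of this step. There is a second, related issue: tracking the constants so that they depend only on $N$ and $\sigma$, rather than also on $s$, $r$ and $t$, as the citation from \cite{Hasto_2021} claims. I would trace the Sobolev--Poincar\'e constants uniformly over the admissible ranges of exponents, and if that fails I would accept dependence on $(N,\sigma,s,r)$. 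That weaker dependence is harmless for the later reverse H\"older arguments, where these exponents are fixed in terms of $p$, $q$ and $N$.
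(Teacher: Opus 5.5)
Your scaling to $B_1$, the subcritical case $s<N$ (Sobolev into $L^{s^*}$, log-convexity of $L^p$-norms, Jensen), and the case $s=N$ with strict inequality are fine. The gap is the case $s>N$, where your key claim runs the wrong way. With $t=\infty$ you need $\frac{1-\vartheta}{r}\le\frac1\sigma$. The hypothesis only says $\frac1\sigma\ge\frac{1-\vartheta}{r}-\vartheta\bigl(\frac1N-\frac1s\bigr)$, and since $\frac1N-\frac1s>0$ this is a strictly weaker lower bound. So the sign of $\frac1s-\frac1N$ works against you, not for you.

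Embedding into $L^\infty$ and then interpolating discards exactly the gain $\vartheta(\frac1N-\frac1s)$. You therefore only reach $\sigma\le r/(1-\vartheta)$, while the lemma allows $\sigma$ up to $\bigl[\frac{1-\vartheta}{r}-\vartheta(\frac1N-\frac1s)\bigr]^{-1}$, and every finite $\sigma$ if this bracket is $\le 0$. For example, $N=2$, $s=4$, $r=2$, $\vartheta=\frac12$ admits $\sigma\le 8$, whereas your argument stops at $\sigma=4$. The same configuration arises in Lemma~\ref{LEM5.2}, with $s=\theta p$, $\sigma=p$, $\vartheta=\theta$, $r=2$. There the hypothesis reduces to $\theta\ge\frac{N}{N+2}$, but when $\theta p>N$ your route would additionally need $p(1-\theta)\le 2$; for instance $N=2$, $p=10$, $\theta=0.6$ satisfies the first and violates the second.

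The borderline case $s=N$ with $\frac1\sigma=\frac{1-\vartheta}{r}$ has the same defect, and a Trudinger-type embedding does not repair it. The constant in $\|\psi\|_{L^t}\lesssim t^{1-1/N}\|\psi\|_{W^{1,N}}$ grows with $t$. Interpolating and optimizing in $t$ therefore leaves a logarithmic loss in $\|\psi\|_{W^{1,N}}/\|\psi\|_{L^r}$ instead of the exact power $\vartheta$.

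The paper gives no proof of this lemma; it quotes it from \cite{Hasto_2021}, and behind it is the classical Gagliardo--Nirenberg inequality. What your argument lacks is a genuinely multiplicative estimate at the top exponent, rather than an embedding followed by H\"older.

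For $s>N$ you can get it from Morrey's estimate on $B_\delta(x)\cap B_1$. For $\delta\in(0,1]$ it gives $|\psi(x)|\le c\,\delta^{-N/r}\|\psi\|_{L^r(B_1)}+c\,\delta^{1-N/s}\|D\psi\|_{L^s(B_1)}$. Optimize in $\delta$, and use $\|\psi\|_{L^r}\lesssim\|\psi\|_{W^{1,s}}$ when the optimal $\delta$ exceeds $1$. This yields $\|\psi\|_{L^\infty(B_1)}\le c\,\|\psi\|_{W^{1,s}(B_1)}^{a}\|\psi\|_{L^r(B_1)}^{1-a}$ with $a=\frac{1/r}{1/r+1/N-1/s}$. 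H\"older between $L^\infty$ and $L^r$ then puts the power $a\bigl(1-\frac r\sigma\bigr)$ on the $W^{1,s}$-norm. This power equals $\vartheta$ exactly when the hypothesis holds with equality, and equality forces $\sigma>r$. The strict case follows by Jensen in $\sigma$, or, if $\vartheta\ge a$, directly from the $L^\infty$ bound together with $\|\psi\|_{L^r}\lesssim\|\psi\|_{W^{1,s}}$.

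For $s=N$ with equality, use Nirenberg's device. Apply the $W^{1,1}$ Sobolev inequality to $|\psi|^\gamma$, estimate $|\psi|^{\gamma-1}|D\psi|$ by H\"older, then interpolate and absorb. Your comment on constants is fine: dependence on $(N,\sigma,s,r)$ is harmless for the fixed exponents used later.
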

We next record a standard iteration lemma from \cite[Lemma 1.1]{GG82}:
\begin{lemma}
	\label{iter_lemma}
	Let $0< r< R<\infty$ be given and $h : [r, R] \to \RR$ be a non-negative bounded function. Furthermore, let $\vartheta \in (0,1)$ and $A,B,\beta, \geq 0$ be fixed constants and 
	suppose that
	$$
	h(\varrho_1) \leq \vartheta h(\varrho_2) + \frac{A}{(\varrho_2-\varrho_1)^{\beta}} + B,
	$$
	holds for all $r \leq \varrho_1 < \varrho_2 \leq R$.
    Then the following conclusion holds:
	$$
	h(r) \lesssim_{(\vartheta,\beta)} \frac{A}{(R-r)^{\beta}} + B.
	$$
\end{lemma}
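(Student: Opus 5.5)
The statement immediately above is the iteration lemma (Lemma~\ref{iter_lemma}), and I would prove it by the classical geometric-sequence argument of Giaquinta--Giusti. The plan is to iterate the hypothesis along an increasing sequence of radii starting at $r$ and accumulating at some point below $R$. The ratio of consecutive gaps is chosen so that the powers of $\vartheta$ beat the growth of the factors $(\varrho_{i+1}-\varrho_i)^{-\beta}$. Boundedness of $h$ then kills the remainder term.

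\textbf{Choice of radii.} Fix $\mu\in(0,1)$ with $\vartheta\mu^{-\beta}<1$; for instance $\mu^{\beta}=(1+\vartheta)/2$ if $\beta>0$, and any $\mu$ if $\beta=0$. Define
\[
\varrho_0:=r,\qquad \varrho_{i+1}:=\varrho_i+(1-\mu)\mu^i(R-r).
\]
Then $\varrho_i\uparrow R$, all $\varrho_i\in[r,R]$, and $\varrho_{i+1}-\varrho_i=(1-\mu)\mu^i(R-r)$.

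\textbf{Iteration.} Applying the hypothesis with $\varrho_1=\varrho_i$ and $\varrho_2=\varrho_{i+1}$ gives
\[
h(\varrho_i)\le \vartheta h(\varrho_{i+1})+\frac{A}{(1-\mu)^{\beta}\mu^{i\beta}(R-r)^{\beta}}+B .
\]
Induction on $k$ then yields
\[
h(r)\le \vartheta^k h(\varrho_k)+\frac{A}{(1-\mu)^\beta(R-r)^\beta}\sum_{i=0}^{k-1}(\vartheta\mu^{-\beta})^i+B\sum_{i=0}^{k-1}\vartheta^i .
\]

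\textbf{Passage to the limit.} Since $h$ is bounded on $[r,R]$, we have $\vartheta^k h(\varrho_k)\le \vartheta^k\sup h\to0$ as $k\to\infty$. Both series converge geometrically, because $\vartheta<1$ and $\vartheta\mu^{-\beta}<1$. Letting $k\to\infty$ gives
\[
h(r)\le c(\vartheta,\beta)\Big(\frac{A}{(R-r)^\beta}+B\Big),
\qquad c=\max\Big\{\frac{(1-\mu)^{-\beta}}{1-\vartheta\mu^{-\beta}},\frac1{1-\vartheta}\Big\}.
\]
This constant depends only on $\vartheta$ and $\beta$.

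The only step requiring care is the choice of $\mu$, which must ensure $\vartheta\mu^{-\beta}<1$ so that the first series converges. The case $\beta=0$ is trivial. Apart from this, the argument is routine; boundedness of $h$ is used exactly once, to discard the remainder $\vartheta^k h(\varrho_k)$.
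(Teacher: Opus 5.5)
Your proof is correct and is essentially the standard argument: the paper does not reprove this lemma but cites Giaquinta--Giusti (Lemma 1.1), whose proof is this geometric iteration. It uses radii with gaps $(1-\mu)\mu^i(R-r)$, chosen so that $\vartheta\mu^{-\beta}<1$, and discards the remainder $\vartheta^k h(\varrho_k)$ using the boundedness of $h$.
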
 

Next, we recall the gluing lemma \cite[Lemma 2.4]{2023_Gradient_Higher_Integrability_for_Degenerate_Parabolic_Double-Phase_Systems} and the parabolic  Poincar\'{e} inequality \cite[\text{Lemma}~2.5]{2023_Gradient_Higher_Integrability_for_Degenerate_Parabolic_Double-Phase_Systems}.
\begin{lemma}    \label{lem: gluing lemma}
Let $u$ be a weak solution to \eqref{eq:main_double}, and let $\eta\in C_0^\infty(B_R(x_0))$ be a function such that
\begin{equation*}    \label{eq: definition of eta in Lemma 3.2}
\eta\geq 0,\quad \dashint_{B_R(x_0)} \eta \; dx =1 \quad \text{and} \quad \|\eta\|_{L^\infty}+R\|D\eta\|_{L^\infty} \leq c(N).
\end{equation*}
Then for $U_{R,S}(z_0)\subset \Omega_T$, there exists a constant $c=c(N, L)$ such that 
\begin{align*}
\sup_{t_1,t_2\in(t_0-S,t_0+S)}|(u\eta)_{x_0;R}(t_2)-(u\eta)_{x_0;R}(t_1)|
 \leq c\frac{S}{R}\miint{U_{R,S}(z_0)}\widetilde{H}(z,|Du|)\; dz,
\end{align*}
 where $\widetilde{H}:\Omega_T\times \mr^+ \rightarrow \mr^+$ is denoted by $$\widetilde{H}(z,\zeta)=\zeta^{p-1}+a(z)\zeta^{q-1}$$ for $z \in \Omega_T$ and $\zeta\in\mr^+$.
\end{lemma}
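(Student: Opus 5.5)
The plan is to test the weak formulation with a function depending only on time and on the fixed spatial weight $\eta$, and to read off the difference of the weighted means from the time term. Fix $t_1<t_2$ in $(t_0-S,t_0+S)$. For small $\varepsilon>0$ let $\beta_\varepsilon$ be the piecewise linear function of $t$ that vanishes for $t\le t_1-\varepsilon$, rises linearly to $1$ on $(t_1-\varepsilon,t_1)$, equals $1$ on $[t_1,t_2]$, and falls linearly to $0$ on $(t_2,t_2+\varepsilon)$. The test function is $\varphi(x,t)=\eta(x)\beta_\varepsilon(t)$.

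This $\varphi$ is only Lipschitz in time and is supported in $\overline{B_R}\times[t_1-\varepsilon,t_2+\varepsilon]$, so it first has to be made admissible. I would mollify $\beta_\varepsilon$ in time. Since $\eta$ is already smooth in space and the gradient term involves only $D\varphi=\beta_\varepsilon D\eta$, no slanted Steklov machinery is needed. The spatial gradient is bounded, and $A(z,u,Du)\in L^1$ by \eqref{eq:growth_cond} and $u\in W^H$. Therefore the limit in the mollification parameter is routine, and we obtain
\[
\iint -u\,\eta\,\partial_t\beta_\varepsilon\,dz+\iint \beta_\varepsilon\,A(z,u,Du)\cdot D\eta\,dz=0 .
\]
If the paper's convention for nonnegative test functions is an issue, both $\varphi$ and its negative can be treated by splitting. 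Here $\varphi\ge0$ already, but equality is needed, and it follows from linearity with test functions of either sign, as is standard.

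The time term equals
\[
\frac1\varepsilon\int_{t_2}^{t_2+\varepsilon}\!\!\int u\eta\,dx\,dt-\frac1\varepsilon\int_{t_1-\varepsilon}^{t_1}\!\!\int u\eta\,dx\,dt .
\]
Because $u\in C((0,T];L^2_{\mathrm{loc}})$, letting $\varepsilon\to0$ gives $|B_R|\bigl((u\eta)_{x_0;R}(t_2)-(u\eta)_{x_0;R}(t_1)\bigr)$. Here $(u\eta)_{x_0;R}(t)$ denotes $\dashint_{B_R}u\eta\,dx$, as the normalization $\dashint\eta=1$ indicates.

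For the gradient term, \eqref{eq:growth_cond} gives $|A(z,u,Du)|\le L\,\widetilde H(z,|Du|)$. Indeed, $|A||\xi|\le L(|\xi|^p+a|\xi|^q)$ implies $|A|\le L(|\xi|^{p-1}+a|\xi|^{q-1})$ for $\xi\neq0$, and the case $\xi=0$ follows by continuity or is trivial. Together with $\|D\eta\|_\infty\le c(N)/R$ and $0\le\beta_\varepsilon\le1$, this bounds the gradient term by $\frac{c(N)L}{R}\iint_{U_{R,S}}\widetilde H(z,|Du|)\,dz$.

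Dividing by $|B_R|$ and writing $|U_{R,S}|=2S|B_R|$ yields
\[
|(u\eta)_{x_0;R}(t_2)-(u\eta)_{x_0;R}(t_1)|\le c(N,L)\frac{S}{R}\miint{U_{R,S}(z_0)}\widetilde H(z,|Du|)\,dz .
\]
Taking the supremum over $t_1,t_2$ finishes the proof. The only delicate point is the justification of the test function together with the passage $\varepsilon\to0$ at the endpoints. This is where the continuity $u\in C((0,T];L^2_{\mathrm{loc}})$ is used, so that the bound holds for every $t_1,t_2$ rather than only almost every; the rest is direct.
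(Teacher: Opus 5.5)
Your proposal is correct and follows the standard argument. The paper does not reprove this lemma; it quotes it from \cite[Lemma~2.4]{2023_Gradient_Higher_Integrability_for_Degenerate_Parabolic_Double-Phase_Systems}, where it is obtained the same way you do it: test with $\eta(x)$ times a piecewise linear time cutoff, bound the diffusion term using $|A|\le L\widetilde H(z,|Du|)$ and $\|D\eta\|_\infty\le c(N)/R$, and pass to the limit in the time term using $u\in C((0,T];L^2_{\mathrm{loc}})$. You are also right that, since your test function does not involve $u$, none of the (slanted) Steklov approximation machinery is needed here.
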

\begin{lemma}\label{lem : parabolic poincare inequality in Section 3}
    Let $u$ be a weak solution to \eqref{eq:main_double}. Then for $U_{R,S}(z_0)\subset \Omega_T$, $m\in(1,q]$ and $\theta\in(1/m,1]$, there exists a constant $c$ depending on $N$, $m$ and $L$ such that 
    \begin{align}\label{eq : parabolic poincare inequality in Section 3_1}
        \miint{U_{R,S}(z_0)} &\frac{|u-(u)_{U_{R,S}(z_0)}|^{\theta m}}{R^{\theta m}}\, dz \leq c\miint{U_{R,S}(z_0)} |Du|^{\theta m} \,dz +c\left(\frac{S}{R^2}\miint{U_{R,S}(z_0)} \widetilde{H}(z,|Du|)\, dz \right)^{\theta m}.
    \end{align}
\end{lemma}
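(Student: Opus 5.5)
The plan is the standard argument combining a spatial Poincaré inequality on each time slice with the gluing Lemma~\ref{lem: gluing lemma}, which controls the time oscillation of weighted spatial means. Fix $\eta$ as in Lemma~\ref{lem: gluing lemma} and write $(u\eta)_{x_0;R}(t):=\dashint_{B_R(x_0)} u(\cdot,t)\eta\,dx$.

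\textbf{Step 1 (replace the mean).} Since $\theta m>1$, the map $f\mapsto \miint{U_{R,S}}|f|^{\theta m}\,dz$ satisfies a quasi-triangle inequality. Moreover, replacing $(u)_{U_{R,S}(z_0)}$ by any other constant, or by any function of $t$ alone, costs at most a factor $2^{\theta m}$. Explicitly, set $c_\ast:=\dashint_{t_0-S}^{t_0+S}(u\eta)_{x_0;R}(s)\,ds$. Since $|u-(u)_{U}|\le |u-c_\ast|+|(u)_U-c_\ast|$ and $|(u)_U-c_\ast|\le \miint{U}|u-c_\ast|$, Jensen's inequality gives
\[
\miint{U_{R,S}}|u-(u)_{U_{R,S}}|^{\theta m}\,dz\le 2^{\theta m}\miint{U_{R,S}}|u-c_\ast|^{\theta m}\,dz .
\]
We then split
\[
|u(x,t)-c_\ast|\le |u(x,t)-(u\eta)_{x_0;R}(t)|+\sup_{t_1,t_2\in(t_0-S,t_0+S)}|(u\eta)_{x_0;R}(t_1)-(u\eta)_{x_0;R}(t_2)|.
\]

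\textbf{Step 2 (spatial term).} For a.e.\ $t$ we have $u(\cdot,t)\in W^{1,1}(B_R)$, and $Du(\cdot,t)\in L^{\theta m}$ for a.e.\ $t$ since $|Du|^{\theta m}\le 1+|Du|^p+a|Du|^q$ when $\theta m\le p$. If $\theta m>p$, the right-hand side of the claimed inequality is simply infinite or trivially handled, so nothing is lost. We then apply the weighted Poincaré inequality with weight $\eta$. Because $\eta\ge0$, $\dashint\eta=1$ and $\|\eta\|_\infty\le c(N)$, we have
\[
\dashint_{B_R}|u-(u\eta)_{x_0;R}(t)|^{\theta m}dx\le c\dashint_{B_R}|u-(u)_{B_R}(t)|^{\theta m}dx\le cR^{\theta m}\dashint_{B_R}|Du(\cdot,t)|^{\theta m}dx,
\]
using $\theta m\ge1$. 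Dividing by $R^{\theta m}$ and averaging in $t$ produces the first term on the right of \eqref{eq : parabolic poincare inequality in Section 3_1}.

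\textbf{Step 3 (time term).} By Lemma~\ref{lem: gluing lemma}, the supremum in Step 1 is bounded by $c\frac{S}{R}\miint{U_{R,S}}\widetilde H(z,|Du|)\,dz$. Dividing by $R$ and raising to the power $\theta m$ gives exactly $c\bigl(\frac{S}{R^2}\miint{U_{R,S}}\widetilde H\,dz\bigr)^{\theta m}$. Combining Steps 1--3 yields the claim, with a constant depending only on $N$, $m$ and $L$.

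The only delicate point is Step 2. We must make sure that the weighted mean, rather than the plain mean, is admissible in Poincaré's inequality; this is why the bounds on $\eta$ from Lemma~\ref{lem: gluing lemma} are needed. We must also have $\theta m\ge 1$, which is exactly why the hypothesis $\theta>1/m$ is imposed. Everything else is routine.
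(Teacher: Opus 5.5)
Your proof is correct and is essentially the standard argument behind the result that the paper quotes from \cite{2023_Gradient_Higher_Integrability_for_Degenerate_Parabolic_Double-Phase_Systems}: compare $u$ with the weighted slice mean $(u\eta)_{x_0;R}(t)$, apply the spatial Poincar\'e inequality on a.e.\ time slice (using only $\dashint_{B_R}\eta\,dx=1$ and $\|\eta\|_\infty\le c(N)$), and control the time oscillation of $(u\eta)_{x_0;R}$ by Lemma~\ref{lem: gluing lemma}. The only blemish is your side remark that replacing $(u)_{U_{R,S}(z_0)}$ by an arbitrary function of $t$ costs just a factor $2^{\theta m}$; this is false (take $u=g(t)$), but you never use it, since the comparison value $c_\ast$ you actually work with is a constant.
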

\section{Reverse H\"{o}lder inequalities}
In this section, we prove the reverse H\"{o}lder inequalities for the $p$- and $(p, q)$-phase. We set
\begin{align}\label{defn of K}
K:=c_a\widetilde M+2\max\{c_E[u]^p_{\gamma,\frac{\gamma}{q}},c_E[u]^2_{\gamma,\frac{\gamma}{q}} ,c_a 2\widetilde Mc_E[u]^q_{\gamma, \gamma/q}\}^{\frac{q\kappa}{p(q-2\gamma)}}+2^{\kappa+1}c_a\|u\|^{q-p}_{\infty}, \quad \widetilde M\geq 1.
\end{align}
\subsection{Reverse H\"{o}lder inequality for $p$-phase}  We assume the following $p$-intrinsic conditions:
\begin{equation}\label{eq: p-phase condition}
    \left\{
    \begin{aligned}
        &\quad \inf_{Q_{\rho, \lambda}(z_0)}a(z)\leq \widetilde M\rho^{\kappa}\\
        &\miint{Q_{\tau,\lambda}(z_0)} H(z,|Du|) \; dz <\lambda^p \ \, \text{ for every }\tau\in(\rho,c_v(2\rho)],\\
        &\miint{Q_{\rho,\lambda}(z_0)} H(z,|Du|) \; dz =\lambda^p.
    \end{aligned}
    \right.
\end{equation}
\begin{remark}
These $p$-intrinsic assumptions differ slightly from those in \cite{2023_Gradient_Higher_Integrability_for_Degenerate_Parabolic_Double-Phase_Systems} and appear more natural within our framework. In what follows, we assume $\rho<1.$
\end{remark}
We begin with a result which relates the scaling parameter and the radius of the cylinder. 
\begin{lemma} \label{lem: lambda rho relation}
Let $u$ be a weak solution to \eqref{eq:main_double}. If $u$ is  locally $\gamma$-H\"{o}lder continuous in space and $\gamma/q$-H\"older continuous in time with $\gamma \in [0, 1),$ and $q\leq p+q\kappa/(q-2\gamma),$ then we have 
\begin{align}\label{relation lambda rho}
    \lambda \leq \max\left\{c_E[u]^p_{\gamma,\gamma/q}, 2c_a\widetilde Mc_E[u]^q_{\gamma, \gamma/q}, c_E[u]^2_{\gamma, \gamma/q}\right\}^{\frac{1}{p}}\rho^{\frac{2\gamma-q}{q}}.
\end{align}
where the constant $c_E$ is from Lemma \ref{lem: energy inequality} and depends only on $N, p, q, \nu, L.$
\end{lemma}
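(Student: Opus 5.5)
We combine the intrinsic identity in \eqref{eq: p-phase condition} with the Caccioppoli inequality of Lemma~\ref{lem: energy inequality}, and then use the H\"older continuity of $u$ to bound the right-hand side. The starting point is
\[
\lambda^p=\miint{Q_{\rho,\lambda}(z_0)} H(z,|Du|)\,dz .
\]
We apply Lemma~\ref{lem: energy inequality} with $U_{r,\tau}=Q_{\rho,\lambda}(z_0)$ and $U_{R,S}=Q_{2\rho,\lambda}(z_0)$, so that $R-r=\rho$ and $S-\tau=3\lambda^{2-p}\rho^2$. Discarding the sup term gives
\[
\lambda^p\le c_E\miint{Q_{2\rho,\lambda}} \Bigl(\frac{|u-(u)|^p}{\rho^p}+a(z)\frac{|u-(u)|^q}{\rho^q}\Bigr)dz+c_E\miint{Q_{2\rho,\lambda}}\frac{|u-(u)|^2}{\lambda^{2-p}\rho^2}\,dz .
\]
If $Q_{2\rho,\lambda}$ is not contained where the H\"older estimate applies, one may instead use $c_v(2\rho)$; this is only a bookkeeping point.

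Next we estimate the oscillation on $Q_{2\rho,\lambda}$. Since $\lambda\ge1$ and $p\ge2$, the time half-length is $\lambda^{2-p}(2\rho)^2\le4\rho^2$. With $\rho<1$ and $q\ge2$ we have $(\rho^2)^{\gamma/q}\ge\rho^{\gamma}$ up to constants, so the time part dominates the spatial part and
\[
|u-(u)_{Q_{2\rho,\lambda}}|\lesssim [u]_{\gamma,\gamma/q}\,\rho^{2\gamma/q}.
\]
We keep the weaker exponent $2\gamma/q$, since this is what produces the exponent $(2\gamma-q)/q$ in \eqref{relation lambda rho}.

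We now bound the coefficient. The first condition of \eqref{eq: p-phase condition} together with \eqref{eq:Z_kappa} gives $a(z)\le c_a(\widetilde M\rho^\kappa+(c\rho)^\kappa)\lesssim 2c_a\widetilde M\rho^\kappa$ on $Q_{2\rho,\lambda}$. Inserting this, we get three terms:
\[
\lambda^p\lesssim c_E[u]^p\rho^{2\gamma p/q-p}+2c_a\widetilde M c_E[u]^q\rho^{\kappa+2\gamma-q}+c_E[u]^2\lambda^{p-2}\rho^{4\gamma/q-2}.
\]
We bound $\lambda^p$ by three times the largest term and treat each case separately.
\begin{itemize}
\item In the first case, $\lambda\le (c_E[u]^p)^{1/p}\rho^{(2\gamma-q)/q}$ immediately.
\item In the third case, dividing by $\lambda^{p-2}$ gives $\lambda^2\lesssim c_E[u]^2\rho^{2(2\gamma-q)/q}$, hence $\lambda\lesssim (c_E[u]^2)^{1/2}\rho^{(2\gamma-q)/q}$. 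Since the maximum is $\ge1$ and we take the $1/p$ power with $p\ge2$, the constant needs care. I would simply absorb it into $\max\{\cdot\}^{1/p}$ after assuming WLOG that the maximum is at least $1$, or else accept the constant as stated.
\item The second case is the main obstacle. We need $\rho^{\kappa+2\gamma-q}\le\rho^{p(2\gamma-q)/q}$ for $\rho<1$, i.e.
\[
\kappa+2\gamma-q\ge p(2\gamma-q)/q ,
\]
equivalently $(q-p)(q-2\gamma)/q\le\kappa$. This is exactly the gap hypothesis $q\le p+q\kappa/(q-2\gamma)$, and it yields $\lambda^p\le 2c_a\widetilde Mc_E[u]^q\rho^{p(2\gamma-q)/q}$.
\end{itemize}
Taking the maximum over the three cases gives \eqref{relation lambda rho}.
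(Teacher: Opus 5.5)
Your proposal is correct and follows the paper's proof essentially step for step: Lemma~\ref{lem: energy inequality} on $Q_{\rho,\lambda}(z_0)\subset Q_{2\rho,\lambda}(z_0)$ combined with the stopping-time identity, the oscillation bound $\osc u\lesssim [u]_{\gamma,\gamma/q}\rho^{2\gamma/q}$, the bound $a\lesssim \widetilde M\rho^{\kappa}$ from \eqref{eq:Z_kappa} and the first condition of \eqref{eq: p-phase condition}, and the gap bound to turn $\rho^{\kappa+2\gamma-q}$ into $\rho^{p(2\gamma-q)/q}$. The only difference is that you handle the $\lambda^{p-2}$ term by dividing it out, where the paper uses Young's inequality and absorption; the two are equivalent.

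On constants, your ``WLOG the maximum is at least $1$'' fix goes the wrong way, since for $M\ge 1$ one has $M^{1/2}\ge M^{1/p}$. The paper's Young step has the same issue: it really produces $(c_E[u]^2_{\gamma,\gamma/q})^{p/2}$ rather than $c_E[u]^2_{\gamma,\gamma/q}$. So both arguments honestly give $\lambda\le C(\texttt{data})\,\rho^{(2\gamma-q)/q}$ with $(c_E[u]^2_{\gamma,\gamma/q})^{1/2}$ in the maximum. That is all that is used later, once $K$ in \eqref{defn of K} is adjusted accordingly.
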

\begin{proof}
 Using the Caccioppoli inequality (Lemma \ref{lem: energy inequality}) with $R=2\rho, S=4\lambda^{2-p}\rho^2, r=\rho,$ and $\tau=\lambda^{2-p}\rho^2,$ and the stopping time from \eqref{eq: p-phase condition}, we get
 \begin{align*}
     \lambda^p&=\miint{Q_{\rho,\lambda}(z_0)} H(z,|Du|) \; dz\\
     &\leq c_E\miint{Q_{2\rho, \lambda}(z_0)}   H\left(z,\frac{|u-(u)_{Q_{2\rho, \lambda}(z_0)}|}{\rho}\right) dz+c_E\lambda^{p-2}\miint{Q_{2\rho, \lambda}(z_0)}\frac{|u-(u)_{Q_{2\rho,\lambda}(z_0)}|^2}{\rho^2}\;dz.
 \end{align*}
 First, we note that
 \begin{align*}
     \miint{Q_{2\rho, \lambda}(z_0)}\left|\frac{u-(u)_{Q_{2\rho, \lambda}(z_0)}}{\rho}\right|^p\, dz\leq \rho^{-p}\left(\osc_{Q_{2\rho, \lambda}(z_0)} u\right)^{p}\leq [u]^p_{\gamma, \frac{\gamma}{q}}\rho^{-p}\rho^{\frac{2p\gamma}{q}}=[u]^p_{\gamma, \frac{\gamma}{q}}\rho^{-p\left(1-\frac{2\gamma}{q}\right)}.
 \end{align*}
 Note that in the last line above we used Theorem \ref{holder cts thm} with $\mathcal{C}=\overline{Q_{2\rho, \lambda}}(z_0).$
 Next, we note that, since \eqref{eq:Z_kappa} holds for all $z_1, z_2 \in \Omega_T,$ for any $z\in Q_{\rho, \lambda}(z_0)$ we have the following.
 \begin{align}\label{eq: a inequality}
     a(z) \leq c_a \left(\inf_{Q_{\rho, \lambda}(z_0)}a(z)+\rho^{\kappa}\right).
 \end{align}
 Using \eqref{eq: a inequality} and $p$-intrinsic conditions, we get
 \begin{align*}
 \miint{Q_{2\rho, \lambda}(z_0)}a(z)\left|\frac{u-(u)_{Q_{2\rho, \lambda}(z_0)}}{\rho}\right|^q\, dz &\leq c_a\miint{Q_{2\rho, \lambda}(z_0)}\rho^{\kappa}\left|\frac{u-(u)_{Q_{2\rho, \lambda}(z_0)}}{\rho}\right|^q\, dz \\
& +c_a\miint{Q_{2\rho, \lambda}(z_0)}\inf_{Q_{2\rho, \lambda}(z_0)}a(z)\left|\frac{u-(u)_{Q_{2\rho, \lambda}(z_0)}}{\rho}\right|^q\, dz\\
&\leq 2c_a \widetilde M \rho^{\kappa}\miint{Q_{2\rho, \lambda}(z_0)}\left|\frac{u-(u)_{Q_{2\rho, \lambda}(z_0)}}{\rho}\right|^q\, dz\\
&\leq 2\widetilde M c_a\rho^{\kappa-q}\left(\osc_{Q_{2\rho, \lambda}(z_0)} u\right)^q \\
&\leq 2c_a\widetilde M[u]^q_{\gamma, \frac{\gamma}{q}} \rho^{\kappa-q+2\gamma}\leq 2c_a \widetilde M [u]^q_{\gamma, \frac{\gamma}{q}} \rho^{-p\left(1-\frac{2\gamma}{q}\right)}.
 \end{align*}
 Note that in the last line of the above inequality we used $q \leq p+q\kappa/ (q-2\gamma).$ Similarly, using Young's inequality, we estimate
 \begin{align*}
  \lambda^{p-2}\miint{Q_{2\rho, \lambda}(z_0)}\frac{|u-(u)_{Q_{2\rho,\lambda}(z_0)}|^2}{\rho^2}\;dz\leq \lambda^{p-2}\left(\osc_{Q_{2\rho, \lambda}(z_0)} u\right)^2\rho^{-2}&\leq [u]^2_{\gamma, \frac{\gamma}{q}}\lambda^{p-2}\rho^{2\left(\frac{2\gamma}{q}-1\right)} \\
  &\leq \varepsilon \lambda^p+c(\varepsilon)[u]^2_{\gamma, \frac{\gamma}{q}} \rho^{-p\left(1-\frac{2\gamma}{q}\right)}\\
  &\leq \varepsilon \lambda^p+c_E[u]^2_{\gamma, \frac{\gamma}{q}} \rho^{-p\left(1-\frac{2\gamma}{q}\right)}.
 \end{align*}
 Absorbing $\varepsilon \la^p$ on the left hand side and combining all the above estimates we obtain \eqref{relation lambda rho}.
\end{proof}
\begin{remark}\label{REM3.3}
The first condition of \eqref{eq: p-phase condition}, together with the above lemma, yields
$$K^2\lambda^p\geq \inf_{Q_{\rho, \lambda}(z_0)}a(z)\lambda^q$$
(see Section~\ref{subsec: stopping time} for the proof). Consequently, once the Lemma~\ref{lem: lambda rho relation} has been established, the first condition of \eqref{eq: p-phase condition} may be replaced by the above inequality, and the two formulations may be used interchangeably in what follows.
\end{remark}
Next, we prove parabolic Sobolev-Poincar\'e inequalities on the $p$-intrinsic cylinders.
\begin{lemma}\label{p_intrinsic poincare 2}
Let $u$ be a weak solution to \eqref{eq:main_double} and let the assumption \ref{eq: p-phase condition} be in force. Then for any $Q_{4\rho, \lambda}(z_0)\subset \Omega_T,$ $\tau \in [2\rho, 4\rho],$ and $\theta\in \left(\frac{q-1}{q}, 1\right],$ there exists a constant $c=c(\textnormal{\texttt{data}})>K$ such that 
 \begin{align}\label{1_lem7.4} 
\miint{Q_{\tau, \lambda}(z_0)}\left|\frac{u-(u)_{Q_{\tau, \lambda}(z_0)}}{\tau}\right|^{\theta p} dz \leq c \miint{Q_{\tau, \lambda}(z_0)}H(z, |Du|)^{\theta}\, dz.
\end{align}
\end{lemma}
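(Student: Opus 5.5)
We follow the scheme of the analogous lemma in \cite{2023_Gradient_Higher_Integrability_for_Degenerate_Parabolic_Double-Phase_Systems}. The plan is to apply the parabolic Poincar\'e inequality, Lemma~\ref{lem : parabolic poincare inequality in Section 3}, on the cylinder $U_{\tau,S}(z_0)=Q_{\tau,\lambda}(z_0)$, so that $S=\lambda^{2-p}\tau^2$, with $m=p$ and the given $\theta$. Since $\theta>\frac{q-1}{q}\ge \frac1p$ fails in general, we use $m=p$ only if $\theta p>1$; this holds because $p\ge2$ and $\theta>\frac12$. This yields
\begin{align*}
\miint{Q_{\tau,\lambda}(z_0)}\left|\frac{u-(u)_{Q_{\tau,\lambda}(z_0)}}{\tau}\right|^{\theta p}dz
\le c\miint{Q_{\tau,\lambda}(z_0)}|Du|^{\theta p}\,dz
+c\left(\lambda^{2-p}\miint{Q_{\tau,\lambda}(z_0)}\widetilde H(z,|Du|)\,dz\right)^{\theta p}.
\end{align*}
The first term is immediately bounded by $\miint{} H(z,|Du|)^\theta\,dz$. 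All the work lies in the second term.

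\textbf{Step 1: the $p$-part of $\widetilde H$.} By H\"older's inequality with exponent $p/(\theta(p-1))\ge1$ (using $\theta\le1$ when $\theta(p-1)\le p$; otherwise we first apply H\"older to the $\theta$-power), we get
\[
\miint{}|Du|^{p-1}\le \Big(\miint{}|Du|^{\theta p}\Big)^{\frac{p-1}{\theta p}}.
\]
Hence
\[
\Big(\lambda^{2-p}\miint{}|Du|^{p-1}\Big)^{\theta p}\le \lambda^{(2-p)\theta p}\Big(\miint{}H^\theta\Big)^{p-1}.
\]
Since $\tau\in[2\rho,4\rho]\subset(\rho,c_v(2\rho)]$, the stopping-time condition \eqref{eq: p-phase condition} gives $\miint{}H^\theta\le(\miint{}H)^\theta\le\lambda^{\theta p}$ by Jensen's inequality. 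Splitting $(\miint{}H^\theta)^{p-1}=(\miint{}H^\theta)^{p-2}\miint{}H^\theta$ then bounds the whole term by $\lambda^{(2-p)\theta p+\theta p(p-2)}\miint{}H^\theta=\miint{}H^\theta$.

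\textbf{Step 2: the $q$-part, the main obstacle.} Here we need to control
\[
\lambda^{2-p}\miint{}a(z)|Du|^{q-1}\,dz.
\]
We write $a|Du|^{q-1}=a^{1/q}\,(a^{\frac{q-1}{q}}|Du|^{q-1})$ and bound $a\le c_a(\inf_{Q_{\rho,\lambda}}a+(c\rho)^\kappa)$ on $Q_{\tau,\lambda}(z_0)$ using \eqref{eq:Z_kappa}. With $\lambda\ge1$ and $\tau\le 4\rho$, the parabolic diameter of $Q_{\tau,\lambda}$ is comparable to $\rho$, so $\sup_{Q_{\tau,\lambda}}a\lesssim K\rho^\kappa$ follows from the first condition in \eqref{eq: p-phase condition} (or from Remark~\ref{REM3.3}, which gives $\sup a\,\lambda^{q}\lesssim K^2\lambda^p$). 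H\"older's inequality with exponent $q/(\theta(q-1))$ requires $\theta>\frac{q-1}{q}$, which explains the range of $\theta$. It yields
\[
\miint{}a|Du|^{q-1}\le (\sup a)^{1/q}\Big(\miint{}(a|Du|^q)^{\theta}\Big)^{\frac{q-1}{\theta q}}.
\]
Using $\sup a\le cK^2\lambda^{p-q}$, we again split off $(\miint{}H^\theta)^{\frac{q-1}{\theta q}-\frac1{\theta p}\cdot\ldots}$. Precisely, we raise to the power $\theta p$ and use $\miint{}H^\theta\le\lambda^{\theta p}$ to convert all excess powers of $\miint{}H^\theta$ into powers of $\lambda$. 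Exponent bookkeeping should give
\[
\lambda^{(2-p)\theta p}\lambda^{\frac{(p-q)\theta p}{q}}\lambda^{\theta p\frac{(q-1)p}{q}-\theta p}\miint{}H^\theta.
\]
The total $\lambda$-exponent is $\theta p\big[2-p+\frac{p-q}{q}+\frac{p(q-1)}{q}-1\big]$. This reduces to $\theta p\cdot\frac{(2-p)(q-1)+\ldots}{q}$, which should be $\le0$ for $p\ge2$. If a positive residual power of $\lambda$ remains, we absorb it via Lemma~\ref{lem: lambda rho relation}, trading it against $\rho^{\kappa}$ through the gap bound \eqref{eq:new_upper_bound}, exactly as in that lemma. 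This last verification is the delicate point of the proof.

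Collecting Steps 1 and 2, we obtain \eqref{1_lem7.4}. The constant depends on $K$ and hence on \texttt{data}; enlarging it if necessary ensures $c>K$.
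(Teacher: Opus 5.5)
Your route is the paper's: Lemma~\ref{lem : parabolic poincare inequality in Section 3} with $m=p$, H\"older on each part of $\widetilde H$, a bound $\sup_{Q_{\tau,\lambda}(z_0)}a\lesssim\lambda^{p-q}$ for the $q$-part, and the intrinsic bound $\miint{Q_{\tau,\lambda}(z_0)}H(z,|Du|)^\theta\,dz\le\lambda^{\theta p}$ to absorb surplus powers. The paper instead peels off $\bigl(\miint{Q_{\tau,\lambda}(z_0)}a|Du|^q\,dz\bigr)^{\eta/q}\le\lambda^{p\eta/q}$ with $\eta=\frac{q-p}{p}$ before raising to the power $\theta p$; this is equivalent. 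The verification you leave open is exactly where the argument closes, and it closes with nothing left over. The exponent you wrote down is
\[
\theta p\Big[(2-p)+\frac{p-q}{q}+\frac{p(q-1)}{q}-1\Big]=\theta p\big[1-p+(p-1)\big]=0 .
\]
The split $Y^{p(q-1)/q}=Y^{p(q-1)/q-1}\,Y$ is legitimate because $p(q-1)/q\ge1$, i.e.\ $(p-1)(q-1)\ge1$. So no fallback is needed. Yours would not have worked anyway: Lemma~\ref{lem: lambda rho relation} bounds $\lambda$ above by a \emph{negative} power of $\rho$, so a leftover $\lambda^{\epsilon}$ with $\epsilon>0$ cannot be absorbed once the factor $\rho^\kappa$ has been spent.

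What does need repair is the justification of $\sup_{Q_{\tau,\lambda}(z_0)}a\le cK^2\lambda^{p-q}$. Remark~\ref{REM3.3} controls only $\inf_{Q_{\rho,\lambda}(z_0)}a$, not the supremum. The first condition of \eqref{eq: p-phase condition} together with \eqref{eq:Z_kappa} gives only $\sup a\lesssim\rho^\kappa$. Converting $\rho^\kappa$ into $\lambda^{p-q}$ is the one place where the gap bound enters. By Lemma~\ref{lem: lambda rho relation}, $\lambda\ge1$ and $q-p\le\frac{q\kappa}{q-2\gamma}$, one gets $\rho^\kappa\le\frac K2\lambda^{-\frac{q\kappa}{q-2\gamma}}\le\frac K2\lambda^{p-q}$, which is \eqref{eq: lambda_rho_relation_2} in the paper. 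Combined with $a\le c_a\bigl(\inf_{Q_{\rho,\lambda}(z_0)}a+c\tau^\kappa\bigr)$ and $\tau\le4\rho$, this yields the bound you use.

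A minor slip in Step~1: the H\"older condition is $\theta p/(p-1)\ge1$, not $p/(\theta(p-1))\ge1$. It holds because $\theta>\frac{q-1}{q}\ge\frac{p-1}{p}$.
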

\begin{proof} We use Lemma \ref{lem : parabolic poincare inequality in Section 3} with $m=p$ and $U_{R, S}(z_0)=Q_{\tau, \lambda}(z_0),$ and get
 \begin{align}\label{eq: 2.5}
     \miint{Q_{\tau, \lambda}(z_0)}\frac{\left|u-(u)_{Q_{\tau, \lambda}(z_0)}\right|^{\theta p}}{\tau^{\theta p}}\, dz &\leq c\miint{Q_{\tau, \lambda}(z_0)}|Du|^{\theta p}\, dz\nonumber\\
     &+ c \left[\lambda^{2-p}\miint{Q_{\tau, \lambda}(z_0)}|Du|^{p-1}+a(z)|Du|^{q-1}\, dz\right]^{\theta p}
 \end{align}
 For the first term, using \eqref{eq: a inequality} we estimate
 \begin{align*}
& \miint{Q_{\tau, \lambda}(z_0)}|Du|^{p-1}+a(z)|Du|^{q-1}\, dz\\
 &\leq   \miint{Q_{\tau, \lambda}(z_0)}|Du|^{p-1}\, dz+c_a \miint{Q_{\tau, \lambda}(z_0)}\inf_{Q_{\tau, \lambda}(z_0)} a(z)^{\frac{1}{q}}a(z)^{\frac{q-1}{q}}|Du|^{q-1}\, dz \\
 &+c_a \tau^{\frac{\kappa}{q}}\miint{Q_{\tau, \lambda}(z_0)}a(z)^{\frac{q-1}{q}}|Du|^{q-1}\, dz
 \end{align*}
  Now we use Lemma \ref{lem: lambda rho relation} and the gap bound $q-p\le \frac{q\kappa}{q-2\gamma}$ to get
  \begin{align}\label{eq: lambda_rho_relation_2}
   \tau^{\kappa}\leq \frac{K}{2}\lambda ^{-\frac{q\kappa}{q-2\gamma}}\leq\frac{K}{2}\lambda^{p-q}   
  \end{align}
  and using this relation and $p$-intrinsic assumption, we derive
 \begin{align*}
\miint{Q_{\tau, \lambda}(z_0)}|Du|^{p-1}+a(z)|Du|^{q-1}\, dz \leq c \miint{Q_{\tau, \lambda}(z_0)}|Du|^{p-1}\, dz+ c \lambda^{\frac{p-q}{q}}\miint{Q_{\tau, \lambda}(z_0)} a(z)^{\frac{q-1}{q}} |Du|^{q-1}\, dz.    
 \end{align*}
 Next we use H\"{o}lder's inequality and consider the case $p\neq q.$ Also, we use the range $q-1<\theta q.$
 \begin{align*}
  &\miint{Q_{\tau, \lambda}(z_0)}|Du|^{p-1}+a(z)|Du|^{q-1}\, dz\\
  &\leq c \miint{Q_{\tau, \lambda}(z_0)}|Du|^{p-1}\, dz+ c \lambda^{\frac{p-q}{q}}\left(\miint{Q_{\tau, \lambda}(z_0)} a(z)^{\theta} |Du|^{\theta q}\, dz\right)^{\frac{q-1}{\theta q}}\\
  &\leq c \miint{Q_{\tau, \lambda}(z_0)}|Du|^{p-1}\, dz+ c\lambda^{\frac{p-q}{q}}\left(\miint{Q_{\tau, \lambda}(z_0)}a(z)^{\theta}|Du|^{\theta q}\, dz\right)^{\frac{\eta}{\theta q}}   \left(\miint{Q_{\tau, \lambda}(z_0)}a(z)^{\theta}|Du|^{\theta q}\, dz\right)^{\frac{q-1-\eta}{\theta q}}\\
  &\leq c \miint{Q_{\tau, \lambda}(z_0)}|Du|^{p-1}\, dz+c\lambda^{\frac{p-q}{q}}\left(\miint{Q_{\tau, \lambda}(z_0)}a(z)|Du|^{q}\, dz\right)^{\frac{\eta}{q}}\left(\miint{Q_{\tau, \lambda}(z_0)}a(z)^{\theta}|Du|^{\theta q}\, dz\right)^{\frac{q-1-\eta}{\theta q}}\\
&\leq c \miint{Q_{\tau, \lambda}(z_0)}|Du|^{p-1}\, dz+ c\lambda^{\frac{p-q}{q}}\lambda^{\frac{p\eta}{q}}\left(\miint{Q_{\tau, \lambda}(z_0)}a(z)^{\theta}|Du|^{\theta q}\, dz\right)^{\frac{q-1-\eta}{\theta q}}.
 \end{align*}
 Now we choose $\eta=\frac{q-p}{p}>0$ to conclude
 \begin{align*}
 \miint{Q_{\tau, \lambda}(z_0)}|Du|^{p-1}+a(z)|Du|^{q-1}\, dz &\leq c \left(\miint{Q_{\tau, \lambda}(z_0)}|Du|^{\theta p}\, dz\right)^{\frac{p-1}{\theta p}}+ c\left(\miint{Q_{\tau, \lambda}(z_0)}a(z)^{\theta}|Du|^{\theta q}\, dz\right)^{\frac{p-1}{\theta p}}\\
 &\leq c \left(\miint{Q_{\tau, \lambda}(z_0)} H(z, |Du|)^{\theta}\, dz\right)^{\frac{p-1}{\theta p}}.   
 \end{align*}
 Note that the above conclusion holds for $p=q$ by using H\"{o}lder's inequality in the first step. Substituting these estimates into \eqref{eq: 2.5}, we get
 \begin{align*}  
 \miint{Q_{\tau, \lambda}(z_0)}\frac{\left|u-(u)_{Q_{\tau, \lambda}(z_0)}\right|^{\theta p}}{\tau^{\theta p}}\, dz \leq c \miint{Q_{\tau, \lambda}(z_0)}|Du|^{\theta p}\, dz + c \lambda^{(2-p)\theta p}\left(\miint{Q_{\tau, \lambda}(z_0)}H(z, |Du|)^{\theta
 }\, dz\right)^{p-1}.
 \end{align*}
 In addition, we use $p$-intrinsic assumptions and $p\geq 2$ to obtain the following.
 \begin{align*}
   &\lambda^{(2-p)\theta p}\left(\miint{Q_{\tau, \lambda}(z_0)}H(z, |Du|)^{\theta}\, dz\right)^{p-1}\\
   &= \lambda^{(2-p)\theta p}\left(\miint{Q_{\tau, \lambda}(z_0)}H(z, |Du|)^{\theta}\, dz\right)^{p-2}\left(\miint{Q_{\tau, \lambda}(z_0)}H(z, |Du|)^{\theta}\, dz\right)\\
   &\leq c \lambda^{(2-p)\theta p}\lambda^{(p-2)\theta p}\left(\miint{Q_{\tau, \lambda}(z_0)}H(z,|Du|)^{\theta}\, dz\right)\\
   &= \left(\miint{Q_{\tau, \lambda}(z_0)}H(z, |Du|)^{\theta}\, dz\right).
 \end{align*}
 Combining these estimates, we finally arrive at \eqref{1_lem7.4}. 
\end{proof}

\begin{lemma}\label{LEMMA7.5}
Let $u$ be a weak solution to \eqref{eq:main_double} with the assumption \ref{eq: p-phase condition} in force. Then there exists a constant $c=c(\textnormal{\texttt{data}})$ such that the following estimate,
\begin{align*}
\lambda^{p-2} \sup_{I_{2\rho}(t_0)}\miint{B_{2\rho}(x_0)}\left|\frac{u-(u)_{Q_{\rho, \lambda}(z_0)}}{2\rho}\right|^2 dx \leq c \lambda^p
\end{align*}
holds whenever $Q_{c_v(2\rho), \lambda}(z_0)\subset \Omega_T.$
\end{lemma}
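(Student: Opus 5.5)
The plan is to apply the Caccioppoli inequality, Lemma~\ref{lem: energy inequality}, on the pair of $p$-intrinsic cylinders $Q_{2\rho,\lambda}(z_0)\subset Q_{4\rho,\lambda}(z_0)$. That is, we take $r=2\rho$, $R=4\rho$, $\tau=\lambda^{2-p}(2\rho)^2$ and $S=\lambda^{2-p}(4\rho)^2$, which is admissible provided $c_v\ge 2$. The sup term on the left is then
\[
\lambda^{p-2}\sup_{t}\dashint_{B_{2\rho}}\frac{|u-(u)_{Q_{2\rho,\lambda}}|^2}{4\rho^2}\,dx,
\]
up to the constant $4$ coming from $\tau$. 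Lemma~\ref{lem: energy inequality} bounds it by
\[
c\miint{Q_{4\rho,\lambda}} H\Bigl(z,\frac{|u-(u)_{Q_{4\rho,\lambda}}|}{4\rho}\Bigr)dz
+c\,\lambda^{p-2}\miint{Q_{4\rho,\lambda}}\frac{|u-(u)_{Q_{4\rho,\lambda}}|^2}{(4\rho)^2}dz .
\]
At the end we must replace the mean over $Q_{2\rho,\lambda}$ by the mean over $Q_{\rho,\lambda}$. We write $u-(u)_{Q_{\rho,\lambda}}=(u-(u)_{Q_{2\rho,\lambda}})+((u)_{Q_{2\rho,\lambda}}-(u)_{Q_{\rho,\lambda}})$. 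The constant difference is controlled by Jensen's inequality, since $|Q_{2\rho,\lambda}|/|Q_{\rho,\lambda}|$ is a dimensional constant, and this reduces it to the $L^2$-mean of $u-(u)_{Q_{2\rho,\lambda}}$ over $Q_{2\rho,\lambda}$, which is bounded by the sup term already controlled.

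For the $H$-term, the $p$-part is estimated by Lemma~\ref{p_intrinsic poincare 2} with $\theta=1$ and $\tau=4\rho$. This gives a bound $c\miint{Q_{4\rho,\lambda}}H(z,|Du|)\,dz\le c\lambda^p$ by the second condition in \eqref{eq: p-phase condition}. The $a$-part we split using \eqref{eq: a inequality} as $a(z)\le c_a(\inf a+(4\rho)^\kappa)$. The infimum is handled by Remark~\ref{REM3.3}, via $\inf a\,\lambda^q\le K^2\lambda^p$. The term $\rho^\kappa$ is handled by \eqref{eq: lambda_rho_relation_2}, $\rho^\kappa\lesssim\lambda^{p-q}$. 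Alternatively, both pieces can be treated directly through the oscillation bound from Theorem~\ref{holder cts thm}, exactly as in the proof of Lemma~\ref{lem: lambda rho relation}, giving $\rho^{\kappa-q+2\gamma}\lesssim\rho^{-p(1-2\gamma/q)}\lesssim\lambda^p$ after inverting \eqref{relation lambda rho}. I would use the Hölder route, because the $q$-power Poincaré inequality in the $p$-intrinsic cylinder is not directly available. The Hölder route gives
\[
\miint{Q_{4\rho,\lambda}} a(z)\Bigl|\frac{u-(u)}{4\rho}\Bigr|^q dz\lesssim \widetilde M[u]^q\rho^{\kappa-q+2\gamma}.
\]
The gap bound turns this into $\rho^{-p(q-2\gamma)/q}$, and Lemma~\ref{lem: lambda rho relation} makes it comparable to $\lambda^p$.

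The main obstacle is the last quadratic term, $\lambda^{p-2}\miint{}|u-(u)|^2/\rho^2$, which is of the same type as the left side but on a larger cylinder. I would first try the direct bound: Hölder continuity gives $\lambda^{p-2}\rho^{2(2\gamma/q-1)}[u]^2$, and Young's inequality as in Lemma~\ref{lem: lambda rho relation} bounds this by $\lambda^p+c\rho^{-p(1-2\gamma/q)}$. This is only useful if $\rho^{-p(1-2\gamma/q)}\lesssim\lambda^p$, i.e.\ if \eqref{relation lambda rho} can be reversed, and it cannot in general. If it fails, I would use the standard approach of \cite{2023_Gradient_Higher_Integrability_for_Degenerate_Parabolic_Double-Phase_Systems}. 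Apply Lemma~\ref{lem : lemma 2.1} slicewise with $\sigma=2$, $s=p$, $r=2$, $\vartheta=1/2$ to bound the $L^2$ term on $Q_{s_1,\lambda}$ by $(\lambda^p)^{1/2}$ times the square root of the sup over $Q_{s_2,\lambda}$, $2\rho\le s_1<s_2\le4\rho$. Then use Young's inequality to absorb a small multiple of the sup, and conclude with the iteration Lemma~\ref{iter_lemma} in the radius.
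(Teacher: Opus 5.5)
\textbf{Verdict: there is a genuine gap, in the $a$-part of the $H$-term.}

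Most of your plan matches the paper. That includes Caccioppoli on $p$-intrinsic cylinders, the $p$-part via Lemma~\ref{p_intrinsic poincare 2} with $\theta=1$, and the quadratic term via slicewise Gagliardo--Nirenberg, Young and Lemma~\ref{iter_lemma} over $2\rho\le s_1<s_2\le 4\rho$. Your Jensen argument for switching the mean to $(u)_{Q_{\rho,\lambda}(z_0)}$ is also correct; the paper does not spell that step out.

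\textbf{What fails.} The H\"older route you choose bounds the $a$-part by $\widetilde M[u]^q\rho^{\kappa-q+2\gamma}\lesssim\rho^{-p(1-2\gamma/q)}$. You then assert $\rho^{-p(1-2\gamma/q)}\lesssim\lambda^p$ ``after inverting'' \eqref{relation lambda rho}. But Lemma~\ref{lem: lambda rho relation} only gives $\lambda^p\lesssim\rho^{-p(1-2\gamma/q)}$, which is the opposite inequality. Nothing in \eqref{eq: p-phase condition} bounds $\lambda$ from below by a negative power of $\rho$; the stopping radius can be far smaller than $\lambda$ dictates. This is exactly the reversal you yourself reject a paragraph later for the quadratic term. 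So, as written, the $a$-part is not bounded by $c\lambda^p$. Your alternative, $\rho^\kappa\lesssim\lambda^{p-q}$, would need a $q$-power Poincar\'e bound by $\lambda^q$ in a $p$-intrinsic cylinder, which, as you note, is unavailable.

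\textbf{The fix.} Apply the oscillation bound only to the excess power $q-p$, and keep $\rho^\kappa$ explicit instead of trading it for a power of $\lambda$. Use $a\le c_a(\inf a+c\rho^\kappa)$, $\inf a\le\widetilde M\rho^\kappa$, and $|u-(u)|^q=|u-(u)|^{q-p}|u-(u)|^p$:
\begin{align*}
\miint{Q_{s_2,\lambda}(z_0)} a(z)\Bigl|\frac{u-(u)_{Q_{s_2,\lambda}(z_0)}}{s_2}\Bigr|^q dz
&\lesssim \rho^{\kappa}\Bigl(\frac{\osc_{Q_{s_2,\lambda}(z_0)} u}{\rho}\Bigr)^{q-p}\miint{Q_{s_2,\lambda}(z_0)}\Bigl|\frac{u-(u)_{Q_{s_2,\lambda}(z_0)}}{s_2}\Bigr|^p dz\\
&\lesssim \rho^{\kappa-(q-p)(1-\frac{2\gamma}{q})}\miint{Q_{s_2,\lambda}(z_0)} H(z,|Du|)\,dz\le c\,\lambda^p .
\end{align*}
The exponent $\kappa-(q-p)(1-\frac{2\gamma}{q})$ is nonnegative precisely by the gap bound $q-p\le q\kappa/(q-2\gamma)$, and $\rho<1$. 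The last step is Lemma~\ref{p_intrinsic poincare 2} with $\theta=1$ together with the second condition in \eqref{eq: p-phase condition}. This is how the paper closes the estimate.

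\textbf{A smaller slip.} In the Gagliardo--Nirenberg step with $\sigma=2$, $s=p$, $\vartheta=\frac12$, $r=2$, the first factor carries the power $\vartheta\sigma/s=1/p$. It therefore contributes $(\lambda^p)^{1/p}=\lambda$, not $(\lambda^p)^{1/2}$. Only with $\lambda$ does Young give $\frac12\lambda^{p-2}\sup+c\lambda^p$; with $\lambda^{p/2}$ you would get $\lambda^{2p-2}$ and the iteration would not close for $p>2$.
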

\begin{proof}   
For any $2\varrho \leq \rho_1< \rho_2 \leq 4\varrho,$ from the Caccioppoli inequality (Lemma \ref{lem: energy inequality}) we get,
\begin{align*}
    &\la^{p-2}\sup_{I_{\rho_1, \lambda}(t_0)}\dashint_{B_{\rho_1}(x_0)}\left|\frac{u-(u)_{Q_{\rho_1, \lambda}(z_0)}}{\rho_1}\right|^2 dx\nonumber\\
    &\leq c \left(\frac{\rho_2}{\rho_2-\rho_1}\right)^q\miint{Q_{\rho_2, \lambda}(z_0)} \left(\left|\frac{u-(u)_{Q_{\rho_2, \lambda}(z_0)}}{\rho_2}\right|^p + a(z)\left|\frac{u-(u)_{Q_{\rho_2, \lambda}(z_0)}}{\rho_2}\right|^q\right)\, dz\nonumber\\
    &+ c\la^{p-2}\left(\frac{\rho_2}{\rho_2-\rho_1}\right)^2\miint{Q_{\rho_2, \lambda}(z_0)}\left|\frac{u-(u)_{Q_{\rho_2, \lambda}(z_0)}}{\rho_2}\right|^2 \,dz=\mathrm{I}+\mathrm{II}. 
\end{align*}
\textbf{Estimate of $\mathrm{I}:$} We estimate the first term of $\mathrm{I}$ by using Lemma \ref{p_intrinsic poincare 2} with $\theta =1$ and $p$-intrinsic assumption.
\begin{align*}
    \left(\frac{\rho_2}{\rho_2-\rho_1}\right)^q\miint{Q_{\rho_2, \lambda}(z_0)}\left|\frac{u-(u)_{Q_{\rho_2, \lambda}(z_0)}}{\rho_2}\right|^p\, dz \leq c \left(\frac{\rho_2}{\rho_2-\rho_1}\right)^q \lambda^p.
\end{align*}
Next we estimate the second term of $\mathrm{I}.$ We use $a \in \mathcal{Z}^{\kappa}(\Omega_T)$ and $p$-intrinsic assumption.
\begin{align*}
&\left(\frac{\rho_2}{\rho_2-\rho_1}\right)^q \miint{Q_{\rho_2, \lambda}(z_0)} a(z)\left|\frac{u-(u)_{Q_{\rho_2, \lambda}(z_0)}}{\rho_2}\right|^q\, dz\\
&\leq c\left(\frac{\rho_2}{\rho_2-\rho_1}\right)^q \miint{Q_{\rho_2, \lambda}(z_0)} \inf_{Q_{\rho_2, \lambda}(z_0)} a(z) \left|\frac{u-(u)_{Q_{\rho_2, \lambda}(z_0)}}{\rho_2}\right|^q\, dz\\
&+c \rho_2^{\kappa} \left(\frac{\rho_2}{\rho_2-\rho_1}\right)^q \miint{Q_{\rho_2, \lambda}(z_0)}\left|\frac{u-(u)_{Q_{\rho_2, \lambda}(z_0)}}{\rho_2}\right|^q\, dz\\
&\leq c \rho_2^{\kappa}\left(\frac{\rho_2}{\rho_2-\rho_1}\right)^q \miint{Q_{\rho_2, \lambda}(z_0)}\left|\frac{u-(u)_{Q_{\rho_2, \lambda}(z_0)}}{\rho_2}\right|^q\, dz\\
&\leq c \rho^{\kappa}_2\left(\frac{\rho_2}{\rho_2-\rho_1}\right)^q \miint{Q_{\rho_2, \lambda}(z_0)}\left|\frac{u-(u)_{Q_{\rho_2, \lambda}(z_0)}}{\rho_2}\right|^{q-p}\left|\frac{u-(u)_{Q_{\rho_2, \lambda}(z_0)}}{\rho_2}\right|^p\, dz\\
&\leq c \rho^{\kappa}_2 \rho^{(\frac{2\gamma}{q}-1)(q-p)}_2\left(\frac{\rho_2}{\rho_2-\rho_1}\right)^q\miint{Q_{\rho_2, \lambda}(z_0)}\left|\frac{u-(u)_{Q_{\rho_2, \lambda}(z_0)}}{\rho_2}\right|^p\, dz\leq c \left(\frac{\rho_2}{\rho_2-\rho_1}\right)^q \lambda^p.
\end{align*}
Combining these two estimates, we get
\begin{align*}
    \mathrm{I}\leq c \left(\frac{\rho_2}{\rho_2-\rho_1}\right)^q \lambda^p.
\end{align*}
\textbf{Estimate of $\mathrm{II}:$} Now we estimate $\mathrm{II}$ by using Lemma \ref{lem : lemma 2.1} with $s=p, \sigma=2, \vartheta=\frac{1}{2}$ and $r=2.$ We note that
\[-\frac{N}{2}\leq \frac{1}{2}\left(1-\frac{N}{p}\right)-\frac{N}{4}\quad \text{ if}\quad p\geq 2.\] Thus we have
\begin{align*}
\mathrm{II} &\leq c\la^{p-2}\left(\frac{\rho_2}{\rho_2-\rho_1}\right)^2\dashint_{I_{\rho_2, \lambda}(t_0)}\left(\dashint_{B_{\rho_2}(x_0)}\left(\left|\frac{u-(u)_{Q_{\rho_2, \lambda}}}{\rho_2}\right|^p+|D u|^p\right) dx\right)^{\frac{1}{p}} \\
&\times\left(\dashint_{B^{\la}_{\rho_2}(x_0)}\left|\frac{u-(u)_{Q_{\rho_2, \lambda}}}{\rho_2}\right|^2 dx\right)^{\frac{1}{2}}dt\\
&\leq c\la^{p-2}\left(\frac{\rho_2}{\rho_2-\rho_1}\right)^2\left(\miint{Q_{\rho_2, \lambda}(z_0)}\left|\frac{u-(u)_{Q_{\rho_2, \lambda}}}{\rho_2}\right|^p dz+ \miint{Q_{\rho_2, \lambda}(z_0)}|D u|^p dz\right)^{\frac{1}{p}}\\
&\times\left(\sup_{I_{\rho_2, \lambda}(t_0)}\dashint_{B_{\rho_2}(x_0)}\left|\frac{u-(u)_{Q_{\rho_2, \lambda}}}{\rho_2}\right|^2 dx\right)^{\frac{1}{2}}.
\end{align*}
Using Lemma \ref{p_intrinsic poincare 2} and the second assumption of \eqref{eq: p-phase condition}, we get
\begin{align*}
    \mathrm{II} \leq c \lambda^{p-2}\left(\frac{\rho_2}{\rho_2-\rho_1}\right)^2 \lambda \left(\sup_{I_{\rho_2, \lambda}(t_0)}\dashint_{B_{\rho_2}(x_0)}\left|\frac{u-(u)_{Q_{\rho_2, \lambda}}}{\rho_2}\right|^2 dx\right)^{\frac{1}{2}}.
\end{align*}
Combining the estimates of $\mathrm{I}$ and $\mathrm{II}, $ we obtain
\begin{align*}
 &\sup_{I_{\rho_1, \lambda}(t_0)}\dashint_{B_{\rho_1}(x_0)}\left|\frac{u-(u)_{Q_{\rho_1, \lambda}(z_0)}}{\rho_1}\right|^2 dx\\ 
 &\leq c \left(\frac{\rho_2}{\rho_2-\rho_1}\right)^q \lambda^2
 +c \left(\frac{\rho_2}{\rho_2-\rho_1}\right)^2 \lambda \left(\sup_{I_{\rho_2, \lambda}(t_0)}\dashint_{B_{\rho_2}(x_0)}\left|\frac{u-(u)_{Q_{\rho_2, \lambda}}}{\rho_2}\right|^2 dx\right)^{\frac{1}{2}}.   
\end{align*}
Applying Young's inequality, we get
\begin{align*}
 \sup_{I_{\rho_1, \lambda}(t_0)}\dashint_{B_{\rho_1}(x_0)}\left|\frac{u-(u)_{Q_{\rho_1, \lambda}(z_0)}}{\rho_1}\right|^2 dx &\leq \frac{1}{2}\left(\sup_{I_{\rho_2, \lambda}(t_0)}\dashint_{B_{\rho_2}(x_0)}\left|\frac{u-(u)_{Q_{\rho_2, \lambda}}}{\rho_2}\right|^2 dx\right)\\
 &+ \left[\left(\frac{\rho_2}{\rho_2-\rho_1}\right)^q+\left(\frac{\rho_2}{\rho_2-\rho_1}\right)^4\right]\lambda^2.
\end{align*}
Finally, using the iteration lemma (see Lemma \ref{iter_lemma}) we get the desired estimate on $2\rho.$ This completes the proof.
\end{proof}
\begin{lemma}\label{LEM5.2}
  Let $u$ be a weak solution to \eqref{eq:main_double} with the assumption \ref{eq: p-phase condition} in force. Then for $Q_{c_v (2\rho), \lambda}(z_0)\subset \Omega_T,$ there exist constants $c=c(\textnormal{\texttt{data}})$ and $\theta_0 (N, p, q) \in (0, 1)$ such that for any $\theta \in (\theta_0, 1)$ the following estimate,  
  \begin{align*}
		\miint{Q_{2\rho, \lambda}(z_0)} H\left(z, \left|\frac{u-(u)_{Q_{2\rho, \lambda}(z_0)}}{2\rho}\right|\right)\,dz
		&\leq c\lambda^{(1-\theta)p} \miint{Q_{2\rho, \lambda}(z_0)}H(z, |D u|)^{\theta}dz.
	\end{align*}
\end{lemma}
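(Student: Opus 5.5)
We split the left-hand side into its $p$-part and its $a$-part:
\[
\miint{Q_{2\rho,\lambda}(z_0)}\Bigl|\tfrac{u-(u)_{Q_{2\rho,\lambda}(z_0)}}{2\rho}\Bigr|^p dz
\qquad\text{and}\qquad
\miint{Q_{2\rho,\lambda}(z_0)}a(z)\Bigl|\tfrac{u-(u)_{Q_{2\rho,\lambda}(z_0)}}{2\rho}\Bigr|^q dz .
\]
Both will be reduced to $L^p$-type quantities in space. The reduction uses the Gagliardo--Nirenberg inequality of Lemma~\ref{lem : lemma 2.1} slice-wise in time, with the $L^2$-sup bound of Lemma~\ref{LEMMA7.5} as the interpolating factor. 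We then close with the Poincar\'e inequality of Lemma~\ref{p_intrinsic poincare 2} at exponent $\theta p$.

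\textbf{The $p$-term.} For a.e.\ $t$ we apply Lemma~\ref{lem : lemma 2.1} with $\sigma=p$, $s=\theta p$, $r=2$ and $\vartheta=\theta$. The admissibility condition
\[
-\tfrac{N}{p}\le \theta\bigl(1-\tfrac{N}{\theta p}\bigr)-(1-\theta)\tfrac N2
\]
holds for $\theta$ close to $1$; this is where $\theta_0(N,p,q)$ enters. On each slice this gives
\[
\dashint_{B_{2\rho}}\Bigl|\tfrac{u-(u)}{2\rho}\Bigr|^p dx
\le c\Bigl(\dashint_{B_{2\rho}}\Bigl|\tfrac{u-(u)}{2\rho}\Bigr|^{\theta p}+|Du|^{\theta p}\,dx\Bigr)
\Bigl(\dashint_{B_{2\rho}}\Bigl|\tfrac{u-(u)}{2\rho}\Bigr|^2dx\Bigr)^{\frac{(1-\theta)p}{2}} .
\]
We replace $(u)_{Q_{\rho,\lambda}}$ by $(u)_{Q_{2\rho,\lambda}}$ in Lemma~\ref{LEMMA7.5}; this costs a constant, since the difference of means is controlled by the same sup quantity. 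The lemma then bounds the last factor by $c\lambda^{(1-\theta)p}$, because $\lambda^{p-2}\sup(\cdot)\le c\lambda^p$ gives $\sup(\cdot)\le c\lambda^2$. Integrating in time and applying Lemma~\ref{p_intrinsic poincare 2} with $\tau=2\rho$ to the $\theta p$ term yields $c\lambda^{(1-\theta)p}\miint{} H(z,|Du|)^\theta dz$. Here $\theta>\theta_0\ge (q-1)/q$, which we also impose.

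\textbf{The $a$-term.} Using \eqref{eq: a inequality} on $Q_{2\rho,\lambda}$ and the first condition in \eqref{eq: p-phase condition}, we get $a(z)\le c(\inf a+\rho^\kappa)\le c\rho^\kappa$. Next we use the Hölder continuity (Theorem~\ref{holder cts thm}) to peel off $q-p$ powers of the oscillation:
\[
\rho^\kappa\Bigl|\tfrac{u-(u)}{\rho}\Bigr|^{q-p}\le c\rho^{\kappa+(2\gamma/q-1)(q-p)}\le c ,
\]
where the last inequality holds by the gap bound $q-p\le q\kappa/(q-2\gamma)$ and $\rho<1$. This is exactly as in the estimate of $\mathrm I$ in Lemma~\ref{LEMMA7.5}. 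The $a$-term is thereby reduced to the $p$-term already treated.

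\textbf{Main obstacle.} The delicate point is keeping the exponent bookkeeping honest. First, the Gagliardo--Nirenberg parameters must give precisely the power $\lambda^{(1-\theta)p}$ from the sup estimate. Second, $\theta_0$ must simultaneously satisfy the interpolation constraint and $\theta_0>(q-1)/q$; we take $\theta_0$ as the maximum of the two thresholds, which depends on $N,p,q$. Should the crude $a\le c\rho^\kappa$ bound be unavailable because we are only assuming $K^2\lambda^p\ge\inf a\,\lambda^q$ (Remark~\ref{REM3.3}), the fallback is as follows. Split $a\le c\inf a+c\rho^\kappa$. Treat the $\inf a$ part with Gagliardo--Nirenberg at $\sigma=q$, $s=\theta q$, using $\inf a\,\lambda^{q}\le K^2\lambda^p$ to convert the resulting $\lambda^{(1-\theta)q}$ into $\lambda^{(1-\theta)p}$ together with a $\lambda^{\theta(q-p)}$ factor. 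That factor is absorbed by writing $\inf a\,|Du|^{\theta q}\le a^{\theta}|Du|^{\theta q}(\inf a)^{1-\theta}$.
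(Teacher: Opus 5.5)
Your proposal is correct and follows the paper's proof essentially step for step. It uses the same split into the $p$- and $a$-parts and the same Gagliardo--Nirenberg choice $\sigma=p$, $s=\theta p$, $\vartheta=\theta$, $r=2$, combined with Lemma~\ref{LEMMA7.5} and Lemma~\ref{p_intrinsic poincare 2}. It also reduces the $a$-part to the $p$-part in the same way, via $a\le c\rho^{\kappa}$ on $Q_{2\rho,\lambda}(z_0)$ and the H\"older bound $\rho^{\kappa+(q-p)(2\gamma/q-1)}\le 1$. Your extra remarks, replacing $(u)_{Q_{\rho,\lambda}(z_0)}$ by $(u)_{Q_{2\rho,\lambda}(z_0)}$ and taking $\theta_0\ge\max\{N/(N+2),(q-1)/q\}$, correctly fill in details the paper leaves implicit. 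The fallback in your last paragraph is unnecessary, since the first condition of \eqref{eq: p-phase condition} is assumed directly; as sketched it is also incomplete, because it would need a $\theta q$-Poincar\'e inequality on $p$-intrinsic cylinders.
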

\begin{proof}
We use assumption \eqref{eq: p-phase condition} and Lemma \ref{lem : lemma 2.1}. To estimate the $p$-term in $H(z, \cdot),$ we use Lemma \ref{lem : lemma 2.1} with $s=\theta p, \sigma =p, \vartheta=\theta$ and $r=2.$ It is easy to see that these choices are admissible, 
\begin{align*}
-\frac{N}{p} \leq \theta \left(1-\frac{N}{\theta p}\right)-(1-\theta)\frac{N}{2} \,\,\, \text{if and only if}\,\,\, \theta \geq \frac{N}{N+2}.    
\end{align*}
Therefore, an application of Lemma \ref{lem : lemma 2.1} gives,
\begin{align}\label{eq: 4.8}
&\miint{Q_{2\rho, \lambda}(z_0)} \left|\frac{u-(u)_{Q_{2\rho, \lambda}(z_0)}}{2\rho}\right|^p dz \nonumber\\
&\lesssim \miint{Q_{2\rho, \lambda}(z_0)}\left(\left|\frac{u-(u)_{Q_{2\rho, \lambda}(z_0)}}{2\rho}\right|^{\theta p} + |D u|^{\theta p} dz\right)\left(\sup_{I_{2\rho}(t_0)}\dashint_{B_{2\rho}(x_0)}\left|\frac{u-(u)_{Q_{2\rho, \lambda}(z_0)}}{2\rho}\right|^2 dx\right)^{\frac{(1-\theta)p}{2}}\nonumber\\
&\overset{\text{Lemma}~\ref{p_intrinsic poincare 2}}{\lesssim} \left(\miint{Q_{2\rho, \lambda}(z_0)}H(z,|D u|)^{\theta} dz\right)\left(\sup_{I_{2\rho}(t_0)}\dashint_{B_{2\rho}(x_0)}\left|\frac{u-(u)_{Q_{2\rho, \lambda}(z_0)}}{2\rho}\right|^2 dx\right)^{\frac{(1-\theta)p}{2}}\nonumber\\
&\overset{\text{Lemma}~\ref{LEMMA7.5}}{\lesssim} \lambda^{(1-\theta)p} \left(\miint{Q_{2\rho, \lambda}(z_0)}H(z, |Du|)^{\theta} dz\right).
\end{align}
To estimate the $q$-term in $H(z, \cdot)$, we use $a\in \mathcal{Z}^{\kappa}(\Omega_T)$ and the assumption \eqref{eq: p-phase condition}.
\begin{align}\label{eq: 4.9}
&\miint{Q_{2\rho, \lambda}(z_0)} a(z)\left|\frac{u-(u)_{Q_{2\rho, \lambda}(z_0)}}{2\rho}\right|^q dz\nonumber\\
&\leq c_a \miint{Q_{2\rho, \lambda}(z_0)} \inf_{Q_{2\rho, \lambda}(z_0)}a(z) \left|\frac{u-(u)_{Q_{2\rho, \lambda}(z_0)}}{2\rho}\right|^q dz\nonumber\\
&+ c_a\miint{Q_{2\rho, \lambda}(z_0)} \rho^{\kappa} \left|\frac{u-(u)_{Q_{2\rho, \lambda}(z_0)}}{2\rho}\right|^q dz \overset{\eqref{eq: p-phase condition}}{\lesssim} \rho^{\kappa}\miint{Q_{2\rho, \lambda}(z_0)} \left|\frac{u-(u)_{Q_{2\rho, \lambda}(z_0)}}{2\rho}\right|^q dz\nonumber\\
\end{align}
Now we use the fact that
\begin{align*}
    \kappa + (q-p)\left(\frac{2\gamma}{q}-1\right)\geq 0,
\end{align*}
and estimate \eqref{eq: 4.9} as
\begin{align}\label{3rd estimate}
&\rho^{\kappa}\miint{Q_{2\rho, \lambda}(z_0)} \left|\frac{u-(u)_{Q_{2\rho, \lambda}(z_0)}}{2\rho}\right|^q dz\nonumber\\
&\leq \rho^{\kappa}\miint{Q_{2\rho, \lambda}(z_0)}\left|\frac{u-(u)_{Q_{2\rho, \lambda}(z_0)}}{2\rho}\right|^{q-p}\left|\frac{u-(u)_{Q_{2\rho, \lambda}(z_0)}}{2\rho}\right|^p dz\nonumber\\
&\leq c \rho^{ \kappa + (q-p)\left(\frac{2\gamma}{q}-1\right)}\miint{Q_{2\rho, \lambda}(z_0)}\left|\frac{u-(u)_{Q_{2\rho, \lambda}(z_0)}}{2\rho}\right|^p dz\leq c \lambda^{(1-\theta)p}\left(\miint{Q_{2\rho, \lambda}(z_0)} H(z, |Du|)^{\theta}\, dz\right)
\end{align}
Combining the estimates \eqref{eq: 4.8} and \eqref{3rd estimate}, we complete the proof.
\end{proof}
\begin{lemma}\label{LEM5.3}
Let $u$ be a weak solution to \eqref{eq:main_double} and the assumption \ref{eq: p-phase condition} is in force. Then there exist constants $c=c(\textnormal{\texttt{data}})$ and $\theta_0 \in (0,1)$ depending on $N,p,q$ such that for any $\theta\in (\theta_0,1)$ and $\varepsilon \in (0, 1),$ we have
\begin{align*}
\lambda^{p-2} \miint{Q_{2\rho, \lambda}(z_0)}\left|\frac{u-(u)_{Q_{2\rho, \lambda}(z_0)}}{2\rho}\right|^2 dz \leq  c \left(\miint{Q_{2\rho, \lambda}(z_0)}H(z, |D u|)^{\theta}\, dz\right)^{\frac{1}{\theta}}+\varepsilon \lambda^{p}    
\end{align*}
whenever $Q_{c_v(2\rho), \lambda}(z_0)\subset \Omega_T.$
\end{lemma}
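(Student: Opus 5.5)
Since $p\ge 2$, the Gagliardo--Nirenberg inequality of Lemma~\ref{lem : lemma 2.1} applied slicewise will interpolate the $L^2$ mean oscillation between the $L^{\theta p}$-level gradient quantity and the $L^\infty$-in-time $L^2$ bound of Lemma~\ref{LEMMA7.5}. Then Young's inequality will absorb a small multiple of $\lambda^p$. This mirrors the proof of Lemma~\ref{LEM5.2}, but for the exponent $2$ in place of $p$.

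\textbf{Step 1 (slicewise interpolation).} Fix $t\in I_{2\rho,\lambda}(t_0)$ and apply Lemma~\ref{lem : lemma 2.1} on $B_{2\rho}(x_0)$ to $\psi=u(\cdot,t)-(u)_{Q_{2\rho,\lambda}(z_0)}$ with $\sigma=2$, $s=\theta p$, $r=2$ and $\vartheta=\tfrac12$. The admissibility condition is
\[
-\tfrac N2\le \tfrac12\bigl(1-\tfrac{N}{\theta p}\bigr)-\tfrac N4 .
\]
It reduces to $\tfrac N4+\tfrac12\ge \tfrac{N}{2\theta p}$, which holds for $\theta$ close to $1$ because $p\ge2$; this fixes $\theta_0=\theta_0(N,p)$. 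Integrating in time and using H\"older's inequality in $t$ gives
\[
\miint{Q_{2\rho,\lambda}}\Bigl|\tfrac{u-(u)}{2\rho}\Bigr|^2dz\le c\Bigl(\miint{Q_{2\rho,\lambda}}\Bigl|\tfrac{u-(u)}{2\rho}\Bigr|^{\theta p}+|Du|^{\theta p}\,dz\Bigr)^{\frac{1}{\theta p}}\Bigl(\sup_{t}\dashint_{B_{2\rho}}\Bigl|\tfrac{u-(u)}{2\rho}\Bigr|^2dx\Bigr)^{\frac12}.
\]
Here $\dashint_t(\cdot)^{1/(\theta p)}\le(\dashint_t\cdot)^{1/(\theta p)}$ by Jensen's inequality, since $\theta p\ge1$.

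\textbf{Step 2 (insert earlier bounds).} The first factor is bounded by $c\bigl(\miint{} H(z,|Du|)^\theta\bigr)^{1/(\theta p)}$, using Lemma~\ref{p_intrinsic poincare 2} with $\tau=2\rho$ (here one needs $\theta>(q-1)/q$, so we enlarge $\theta_0$ to depend on $q$ as well) and the bound $|Du|^{\theta p}\le H^\theta$. Lemma~\ref{LEMMA7.5} bounds the second factor by $c\lambda$. There is one technical point: Lemma~\ref{LEMMA7.5} is centered at $(u)_{Q_{\rho,\lambda}}$ rather than $(u)_{Q_{2\rho,\lambda}}$. This is harmless, because replacing a constant by the mean over the larger cylinder costs at most a factor of $2$ in the $L^2$ sense; alternatively, I would apply Lemma~\ref{lem : lemma 2.1} directly with the constant $(u)_{Q_{\rho,\lambda}}$ and then switch constants. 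Multiplying by $\lambda^{p-2}$ yields
\[
\lambda^{p-2}\miint{}\Bigl|\tfrac{u-(u)}{2\rho}\Bigr|^2dz\le c\,\lambda^{p-1}\Bigl(\miint{}H(z,|Du|)^\theta dz\Bigr)^{\frac{1}{\theta p}}.
\]

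\textbf{Step 3 (Young).} Apply Young's inequality with exponents $p$ and $p/(p-1)$:
\[
c\,\lambda^{p-1}X^{1/p}\le \varepsilon\lambda^p+c(\varepsilon)X, \qquad X=\Bigl(\miint{} H^\theta\Bigr)^{1/\theta}.
\]
This is exactly the claimed estimate. The main obstacle is the admissibility and centering bookkeeping in Steps 1--2, namely making the choice of $\theta_0$ compatible with both Lemma~\ref{lem : lemma 2.1} and Lemma~\ref{p_intrinsic poincare 2}; the rest is routine.
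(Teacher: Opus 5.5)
Your proposal is correct and is essentially the paper's own proof. It applies Lemma~\ref{lem : lemma 2.1} slicewise with $\sigma=2$, $s=\theta p$, $\vartheta=\tfrac12$, $r=2$, uses Lemma~\ref{p_intrinsic poincare 2} and Lemma~\ref{LEMMA7.5} to reach $c\,\lambda^{p-1}$ times the $\tfrac{1}{\theta p}$-th power of the mean of $H(z,|Du|)^\theta$, and finishes with Young's inequality with exponents $\bigl(\tfrac{p}{p-1},p\bigr)$. Your two bookkeeping remarks are also correct and cover details the paper passes over silently: switching the centering constant in Lemma~\ref{LEMMA7.5} to $(u)_{Q_{2\rho,\lambda}(z_0)}$ costs a factor $2$ by the triangle inequality, and $\theta_0$ must also exceed $(q-1)/q$ so that Lemma~\ref{p_intrinsic poincare 2} applies.
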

\begin{proof}
We use Lemma \ref{lem : lemma 2.1}  with $\sigma =2, s=\theta p, \vartheta=\frac{1}{2}$ and $r=2,$  Lemma \ref{p_intrinsic poincare 2} and Lemma \ref{LEMMA7.5}. This gives
\begin{align*} 
\miint{Q_{2\rho, \lambda}(z_0)}\left|\frac{u-(u)_{Q_{2\rho, \lambda}(z_0)}}{2\varrho}\right|^2 dz &\lesssim \left(\miint{Q_{2\rho, \lambda}(z_0)}H(z, |D u|)^{\theta}dz\right) ^{\frac{1}{\theta p}}\left(\sup_{I_{2\rho}(t_0)}\dashint_{B_{2\rho}(x_0)}\left|\frac{u-(u)_{Q_{2\rho, \lambda}(z_0)}}{2\varrho}\right|^2 dx\right)^{\frac{1}{2}}\\
&\overset{\text{Lemma}~\ref{LEMMA7.5}}{\lesssim} \lambda \left(\miint{Q_{2\varrho, \lambda}(z_0)} H(z, |D u|)^{\theta}dz\right)^{\frac{1}{\theta p}}.
\end{align*}
Therefore, applying Young's inequality with $(\frac{p}{p-1}, p)$, we get
\begin{align*}
  \lambda^{p-2} \miint{Q_{2\rho, \lambda}(z_0)}\left|\frac{u-(u)_{Q_{2\rho, \lambda}(z_0)}}{2\varrho}\right|^2 dz &\lesssim\lambda^{p-1}\left(\miint{Q_{2\rho, \lambda}(z_0)}H(z,  |D u|)^{\theta}dz\right)^{\frac{1}{\theta p}}.\\
  &\lesssim \varepsilon \lambda^p + c(\varepsilon) \left(\miint{Q_{2\varrho, \lambda}(z_0)} H(z, |Du|)^{\theta}dz\right)^{\frac{1}{\theta}}.
\end{align*}
This completes the proof.
\end{proof}
Now we are ready to prove the reverse H\"{o}lder inequality for the $p$-phase.
\begin{lemma}\label{LEM5.4}
Let $u$ be a weak solution to \eqref{eq:main_double}. Moreover, we assume that the assumption \eqref{eq: p-phase condition} on the $p$-intrinsic cylinders holds. Then there exist constants $c=c(\textnormal{\texttt{data}})$ and $\theta_0 \in (0,1)$ depending on $N,p,q$ such that for any $\theta \in (\theta_0, 1),$  we have
\begin{align*}
\miint{Q_{\rho, \lambda}(z_0)}H(z, |D u|)\, dz \leq c \left(\miint{Q_{2\rho, \lambda}(z_0)}H(z, |D u|)^{\theta}\,dz\right)^{\frac{1}{\theta}}
\end{align*}
whenever $Q_{c_v(2\rho), \la}(z_0)\subset \Omega_T.$
\end{lemma}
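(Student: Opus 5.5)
The reverse H\"older inequality follows by feeding the previous lemmas into the Caccioppoli inequality on the pair of cylinders $Q_{\rho,\lambda}(z_0)\subset Q_{2\rho,\lambda}(z_0)$. We apply Lemma~\ref{lem: energy inequality} with $R=2\rho$, $r=\rho$, $S=(2\rho)^2\lambda^{2-p}$ and $\tau=\rho^2\lambda^{2-p}$. Dropping the supremum term gives
\begin{align*}
\miint{Q_{\rho,\lambda}(z_0)}H(z,|Du|)\,dz
&\le c\miint{Q_{2\rho,\lambda}(z_0)}H\left(z,\left|\frac{u-(u)_{Q_{2\rho,\lambda}(z_0)}}{2\rho}\right|\right)dz\\
&\quad+c\lambda^{p-2}\miint{Q_{2\rho,\lambda}(z_0)}\left|\frac{u-(u)_{Q_{2\rho,\lambda}(z_0)}}{2\rho}\right|^2dz .
\end{align*}
Here $R-r=\rho$ and $S-\tau=3\rho^2\lambda^{2-p}$. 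The factors of $2$ are absorbed into $c$, since $H(z,2s)\le 2^qH(z,s)$.

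Next we estimate the two terms on the right.
\begin{itemize}
\item[(i)] For the first term we use Lemma~\ref{LEM5.2}. It is bounded by $c\lambda^{(1-\theta)p}\miint{Q_{2\rho,\lambda}(z_0)}H(z,|Du|)^\theta\,dz$.
\item[(ii)] For the second term we use Lemma~\ref{LEM5.3}. It is bounded by $c(\varepsilon)\bigl(\miint{Q_{2\rho,\lambda}(z_0)}H(z,|Du|)^\theta\,dz\bigr)^{1/\theta}+\varepsilon\lambda^p$.
\end{itemize}
Write $\Psi:=\bigl(\miint{Q_{2\rho,\lambda}(z_0)}H(z,|Du|)^\theta\,dz\bigr)^{1/\theta}$. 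Young's inequality with exponents $\tfrac1{1-\theta}$ and $\tfrac1\theta$ gives $\lambda^{(1-\theta)p}\Psi^\theta\le\varepsilon\lambda^p+c(\varepsilon)\Psi$. The right-hand side is therefore at most $2\varepsilon\lambda^p+c(\varepsilon)\Psi$. Both lemmas require $\theta\in(\theta_0,1)$, so we take $\theta_0$ to be the larger of the two thresholds.

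To close the argument we use the stopping-time condition in \eqref{eq: p-phase condition}, namely $\miint{Q_{\rho,\lambda}(z_0)}H(z,|Du|)\,dz=\lambda^p$. The left-hand side equals $\lambda^p$, so choosing $\varepsilon=1/4$ and absorbing $2\varepsilon\lambda^p=\tfrac12\lambda^p$ gives $\lambda^p\le c\Psi$. This yields $\miint{Q_{\rho,\lambda}(z_0)}H(z,|Du|)\,dz=\lambda^p\le c\Psi$, which is the claim.

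The step needing the most care is compatibility of hypotheses. Lemmas~\ref{LEM5.2} and \ref{LEM5.3} rest on Lemma~\ref{LEMMA7.5}, which needs $Q_{c_v(2\rho),\lambda}(z_0)\subset\Omega_T$ and the sub-intrinsic bounds for $\tau\in(\rho,c_v(2\rho)]$. Both are assumed in the statement. The constant $c$ depends only on \texttt{data} through $c_E$ and the constants of the lemmas used, so no new gap-condition argument is needed. The absorption is legitimate because $\lambda^p$ is finite, which holds since $H(z,|Du|)\in L^1$.
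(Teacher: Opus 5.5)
Your proposal is correct and follows the paper's proof step for step. Like the paper, you apply Caccioppoli (Lemma~\ref{lem: energy inequality}) with $R=2\rho$, $r=\rho$, $S=4\lambda^{2-p}\rho^2$, $\tau=\lambda^{2-p}\rho^2$, bound the first term by Lemma~\ref{LEM5.2} plus Young's inequality and the second by Lemma~\ref{LEM5.3}, and absorb $\varepsilon\lambda^p$ through the identity $\miint{Q_{\rho,\lambda}(z_0)}H(z,|Du|)\,dz=\lambda^p$.

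One small point to tidy: $\varepsilon$ must be chosen relative to the Caccioppoli constant, e.g.\ $\varepsilon=1/(4c)$ rather than $1/4$. This is harmless, since $\varepsilon\in(0,1)$ is free in both Young's inequality and Lemma~\ref{LEM5.3}.
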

\begin{proof}
From the Caccioppoli inequality (Theorem \ref{lem: energy inequality}) with $R=2\rho, S=4\lambda^{2-p}\rho^2, r=\rho,$ and $\tau=\lambda^{2-p}\rho^2,$ we have
\begin{align}\label{EQUATION7.22}
    \miint{Q_{\rho, \lambda}(z_0)} H(z, |D u|)\, dz &\lesssim \miint{Q_{2\rho, \lambda}(z_0)} H\left(z, \left|\frac{u-(u)_{Q_{2\rho, \lambda}(z_0)}}{2\rho}\right|\right)\,dz\nonumber\\
	 &+\lambda^{p-2} \miint{Q_{2\rho, \lambda}(z_0)}\left|\frac{u-(u)_{Q_{2\rho, \lambda}(z_0)}}{2\rho}\right|^2\,dz.
\end{align}
From Lemma \ref{LEM5.2}, we have
\begin{align*}
\miint{Q_{2\rho, \lambda}(z_0)} H\left(z, \left|\frac{u-(u)_{Q_{2\rho, \lambda}(z_0)}}{2\rho}\right|\right)\,dz
&\lesssim \lambda^{(1-\theta)p} \miint{Q_{2\rho, \lambda}(z_0)}H(z, |D u|)^{\theta}\,dz\\
&\overset{\text{Young's inequality}}{\leq} \varepsilon \la^p + c \left(\miint{Q_{2\rho, \lambda}(z_0)}H(z,|D u|)^{\theta}\, dz\right)^{\frac{1}{\theta}}.
\end{align*}
 On the other hand, from Lemma \ref{LEM5.3}, we obtain
\begin{align*}
 \lambda^{p-2} \miint{Q_{2\rho, \lambda}(z_0)}\left|\frac{u-(u)_{Q_{2\rho, \lambda}(z_0)}}{2\rho}\right|^2\,dz &\leq \varepsilon \la^p + c \left(\miint{Q_{2\rho, \lambda}(z_0)}H(z, |D u|)^{\theta}\, dz\right)^{\frac{1}{\theta}}.
\end{align*}
Substituting the above estimates into \eqref{EQUATION7.22}, we get
\begin{align*}
   \miint{Q_{\rho, \lambda}(z_0)} H\left(z, |D u|\right)\, dz \leq \varepsilon \la^p + c \left(\miint{Q_{2\rho, \lambda}(z_0)}H(z, |D u|)^{\theta}dz\right)^{\frac{1}{\theta}}.
\end{align*}
The proof can be completed by absorbing $\varepsilon \la^p$ on the left hand side, which is allowed because of the stopping time (see the third display) in \eqref{eq: p-phase condition}.
\end{proof}
\begin{remark} If $\kappa < 1 - \dfrac{2\gamma}{q}$, then $q<p+1,$ and we may take $\theta\in\bigl(\tfrac{q-1}{p},\,1\bigr].$ We can observe that the reverse H\"older inequality in the $p$-phase then follows from the $p$-gradient term alone, i.e., both terms of $H(z,|Du|)$ are bounded by
$$\miint{Q_{\rho,\lambda}(z_0)}|Du|^{\theta p}\,dz.$$ This matches the heuristic idea of $p$-phase, i.e., when 
$a$ is small, the equation behaves like the parabolic $p$-Laplace equation. When $\kappa$ is larger, so that $q \ge p+1$, the $p$-gradient term no longer suffices, and we need to get a bound by
$$\miint{Q_{\rho,\lambda}(z_0)} H(z,|Du|)^{\theta}\,dz\quad \text{with }\quad  \theta\in\bigl(\tfrac{q-1}{q},1\bigr].$$
\end{remark}

\subsection{Reverse H\"{o}lder inequality for $(p, q)$-phase}  We assume the following $(p, q)$-intrinsic conditions:
\begin{equation}    \label{eq: pq-phase condition}
    \left\{
    \begin{aligned}
        &\frac{a(z_0)}{2c_a}\leq a(z)\leq 2c_a a(z_0)\quad \text{for every $z \in G_{\rho, \lambda}(z_0)$,}\\
        &\miint{G_{\tau,\lambda}(z_0)} H(z,|Du|) \; dz <\lambda^p+a(z_0)\lambda^q \ \, \text{ for every }\tau\in(\rho,c_v(2\rho)],\\
        &\miint{G_{\rho,\lambda}(z_0)} H(z,|Du|) \; dz =\lambda^p+a(z_0)\lambda^q.
    \end{aligned}
    \right.
\end{equation}
\begin{lemma}\label{Lemma 3.13}
Let $u$ be a weak solution to \eqref{eq:main_double}. Then there exists a constant $c=c(\textnormal{\texttt{data}}) \geq K^{2p/q}$ such that for any $G_{4\rho,\lambda}(z_0)\subset \Omega_T$ with \eqref{eq: pq-phase condition}, $\tau \in [2\rho,4\rho]$ and $\theta \in ((q-1)/q,1]$, we have
\begin{align}\label{Eq 3.10}
&\miint{G_{\tau,\lambda}(z_0)} \frac{|u-(u)_{G_{\tau,\lambda}(z_0)}|^{\theta p}}{\tau^{\theta p}}\; dz \leq c \miint{G_{\tau,\lambda}(z_0)} H(z, |Du|)^{\theta}\, dz
\end{align}
and 
\begin{align}\label{Eq: 3.11}
&\miint{G_{\tau,\lambda}(z_0)} \frac{|u-(u)_{G_{\tau,\lambda}(z_0)}|^{\theta q}}{\tau^{\theta q}}\; dz \leq c \miint{G_{\tau,\lambda}(z_0)} |Du|^{\theta q}\, dz.
\end{align}
\end{lemma}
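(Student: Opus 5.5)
Both estimates follow from the parabolic Poincaré inequality, Lemma~\ref{lem : parabolic poincare inequality in Section 3}, applied on $U_{R,S}(z_0)=G_{\tau,\lambda}(z_0)$. Here $R=\tau$ and $S=\lambda^2\tau^2/\Lambda$ with $\Lambda=\lambda^p+a(z_0)\lambda^q$, so that $S/R^2=\lambda^2/\Lambda$. For \eqref{Eq 3.10} I take $m=p$, and for \eqref{Eq: 3.11} I take $m=q$, with the given $\theta\in((q-1)/q,1]$ in both cases.

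The gradient term is immediate in each case. For \eqref{Eq 3.10} we have $|Du|^{\theta p}\le H(z,|Du|)^\theta$. For \eqref{Eq: 3.11} the term is exactly $|Du|^{\theta q}$. The whole work is to control the time-correction term
\[
\Bigl(\frac{\lambda^2}{\Lambda}\miint{G_{\tau,\lambda}(z_0)}\bigl(|Du|^{p-1}+a(z)|Du|^{q-1}\bigr)\,dz\Bigr)^{\theta m}.
\]

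To handle it, I first use the first condition of \eqref{eq: pq-phase condition} to replace $a(z)$ by $a(z_0)$ up to the factor $2c_a$. That condition is stated on $G_{\rho,\lambda}$ rather than $G_{\tau,\lambda}$. I would extend it to $G_{4\rho,\lambda}$ using \eqref{eq:Z_kappa} together with the fact that in the $(p,q)$-phase $a(z_0)\gtrsim \rho^\kappa$ (the analogue of \eqref{claim_infa(z)}, via Remark~\ref{REM3.3}), at the cost of a constant.

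Next I apply Hölder with exponents $\theta p/(p-1)$ and $\theta q/(q-1)$, which is admissible since $\theta>(q-1)/q\ge (p-1)/p$. Combined with the second condition of \eqref{eq: pq-phase condition}, this gives
\[
\miint{}|Du|^{p-1}\le \Bigl(\miint{} H^\theta\Bigr)^{\frac{p-1}{\theta p}}\le \Lambda^{\frac{p-1}{p}},
\qquad
a(z_0)\miint{}|Du|^{q-1}\le a(z_0)^{\frac1q}\Bigl(\miint{} a^\theta|Du|^{\theta q}\Bigr)^{\frac{q-1}{\theta q}}.
\]
For \eqref{Eq 3.10} the goal is a bound of the form $c\,\lambda^{\theta p}\cdot\Lambda^{-\theta}\miint{}H^\theta$. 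I split one power of the average and bound the remaining powers by $\Lambda$ via the stopping condition, exactly as in the $p\ge2$ step of Lemma~\ref{p_intrinsic poincare 2}. The key inequalities are
\[
\lambda^2\Lambda^{-1}\Lambda^{\frac{p-1}{p}}=\lambda^2\Lambda^{-1/p}\le\lambda,
\qquad
\lambda^2\Lambda^{-1}a(z_0)\Lambda^{\frac{q-1}{q}}a(z_0)^{-\frac{q-1}{q}}\le\lambda,
\]
where the second uses $a(z_0)^{1/q}\lambda\le\Lambda^{1/q}$. Hence the time term is at most $c\,\lambda^{\theta p-\theta p'}(\text{averages})$, and after the splitting it becomes $\lesssim \miint{}H^\theta$.

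For \eqref{Eq: 3.11} the same scheme works with $\lambda^{\theta q}$ in place of $\lambda^{\theta p}$. Here the leftover power is compared to $|Du|^{\theta q}$ using that in the $(p,q)$-phase $\lambda^p\le K^2 a(z_0)\lambda^q$ (the reverse of Remark~\ref{REM3.3}). This reduces $H$ to $a(z_0)|Du|^q$ up to constants, which explains the constant $c\ge K^{2p/q}$.

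The main obstacle is the exponent bookkeeping in the time term. One must ensure that after Hölder the powers of $\lambda$ and $\Lambda$ cancel exactly into $\miint{}H^\theta$ (resp.\ $|Du|^{\theta q}$) and do not leave a bare $\Lambda$ on the right-hand side. I would handle this by writing the power $\theta m$ as $1+(\theta m-1)$ and absorbing the $(\theta m-1)$ powers with the second intrinsic condition. The $\theta q$ estimate is more delicate, because the $p$-part of the time term must be dominated by $a(z_0)$-weighted quantities, which is where the phase condition $K\lambda^p\le a(z_0)\lambda^q$ is used.
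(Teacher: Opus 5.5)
Your plan for \eqref{Eq 3.10} is sound, and slightly cleaner than the paper's. After Hölder and splitting off one power of the averages, you can use $a(z_0)\lambda^q\le\Lambda$ on the $a(z_0)^{p/q}$ factor and $\lambda^p\le\Lambda$ on the rest. This gives the prefactor $(\lambda^p/\Lambda)^{\theta}\le 1$ for the $a$-part with no case distinction; the paper splits into $K^2\lambda^p\lessgtr a(z_0)\lambda^q$, and its constant $K^{2p/q}$ comes from its Case~II.

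The gap is in \eqref{Eq: 3.11}. There you assert that in the $(p,q)$-phase $\lambda^p\le K^2a(z_0)\lambda^q$, and you use it to dominate the $p$-part of the time term by $a(z_0)$-weighted quantities, i.e.\ to get $\miint{G_{\tau,\lambda}(z_0)}|Du|^q\,dz\lesssim\lambda^q$. This does not follow from \eqref{eq: pq-phase condition}. The first condition only says that $a$ is comparable to $a(z_0)$ on the cylinder. That holds, e.g., for a small positive constant $a$, where $a(z_0)\lambda^{q-p}$ can be as small as you like. Remark~\ref{REM3.3} concerns the $p$-phase and has no valid converse. The paper states explicitly in Section~\ref{subsec: stopping time} that in the $(p,q)$-phase both $K^2\lambda^p\le a(z_0)\lambda^q$ and $K^2\lambda^p\ge a(z_0)\lambda^q$ can occur, and its proof treats the latter as a separate Case~II.

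In that regime your route fails. The only available bound is $\miint{G_{\tau,\lambda}(z_0)}|Du|^q\,dz\le 2c_a\Lambda/a(z_0)$. Inserting it into $(\lambda^2/\Lambda)^{\theta q}\bigl(\miint{G_{\tau,\lambda}(z_0)}|Du|^{\theta q}\,dz\bigr)^{p-1}$ leaves, for $p>2$, a prefactor comparable to $(a(z_0)\lambda^{q-p})^{-\theta(p-2)}$, which is unbounded.

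The missing idea is to keep the $p$-part in $p$-homogeneity.
\begin{itemize}
\item Hölder gives $\bigl(\miint{G_{\tau,\lambda}(z_0)}|Du|^{\theta p}\,dz\bigr)^{(p-1)q/p}$.
\item Bound $(p-2)q/p$ of these powers by $\Lambda^{\theta(p-2)q/p}$ via the stopping condition.
\item Bound the remaining $\bigl(\miint{G_{\tau,\lambda}(z_0)}|Du|^{\theta p}\,dz\bigr)^{q/p}$ by $\miint{G_{\tau,\lambda}(z_0)}|Du|^{\theta q}\,dz$ via Jensen, since $q\ge p$.
\item The prefactor becomes $(\lambda^p/\Lambda)^{2\theta q/p}\le 1$.
\end{itemize}
For the $a$-part, use $a\le 2c_a a(z_0)$, then Hölder, then $\miint{G_{\tau,\lambda}(z_0)}|Du|^q\,dz\le 2c_a\Lambda/a(z_0)$ on $q-2$ of the powers. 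This gives the prefactor $c\,(a(z_0)\lambda^q/\Lambda)^{2\theta}\le c$. Both bounds hold in either regime, so no phase assumption is needed.

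One further correction: the splitting must be done on the power of the averages after Hölder, as you correctly say earlier. That means $p-1=1+(p-2)$, $q-1=1+(q-2)$, and $p(q-1)/q=1+\bigl(p(q-1)/q-1\bigr)$, which is where $p\ge 2$ enters. The version in your last paragraph, $X^{\theta m}=X\cdot X^{\theta m-1}$ for the time term $X$, does not work. It leaves $\miint{G_{\tau,\lambda}(z_0)}|Du|^{p-1}\,dz$, which is of lower homogeneity than $\miint{G_{\tau,\lambda}(z_0)}H(z,|Du|)^{\theta}\,dz$ and cannot be bounded by it.
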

\begin{proof} We prove \eqref{Eq 3.10} for $K^2\lambda^p\leq a(z_0)\lambda^q$ and $K^2\lambda^p\geq a(z_0)\lambda^q.$ Similarly, we prove \eqref{Eq: 3.11} for $K^2\lambda^p\leq a(z_0)\lambda^q$ and $K^2\lambda^p\geq a(z_0)\lambda^q.$
First, we observe from \eqref{eq : parabolic poincare inequality in Section 3_1} of Lemma \ref{lem : parabolic poincare inequality in Section 3} that
\begin{align}\label{EQQ3.12}
    &\miint{G_{\tau, \lambda}(z_0)}\frac{|u-(u)_{G_{\tau, \lambda}(z_0)}|^{\theta p}}{\tau^{\theta p}}\,dz\nonumber\\
    &\leq c\miint{G_{\tau, \lambda}(z_0)}|Du|^{\theta p}\, dz + c\left(\frac{\lambda^2}{\lambda^p+a(z_0)\lambda^q}\miint{G_{\tau, \lambda}(z_0)}|Du|^{p-1}+a(z)|Du|^{q-1} dz\right)^{\theta p}\nonumber\\
    &\leq c\miint{G_{\tau, \lambda}(z_0)}|Du|^{\theta p}\, dz+ c\left(\frac{\lambda^2}{\lambda^p+a(z_0)\lambda^q}\miint{G_{\tau, \lambda}(z_0)}|Du|^{p-1} dz\right)^{\theta p}\nonumber\\
    &\quad +c\left(\frac{\lambda^2}{\lambda^p+a(z_0)\lambda^q}\miint{G_{\tau, \lambda}(z_0)}a(z)|Du|^{q-1} dz\right)^{\theta p}=c\miint{G_{\tau, \lambda}(z_0)}|Du|^{\theta p}\, dz+\mathrm{I}+\mathrm{II}.
\end{align}
In view of Remark \ref{REM3.3}, we divide the proof into two cases.

\textbf{Case I:} $K^2\lambda^p\leq a(z_0)\lambda^q.$ In this case, we estimate $\mathrm{I}$ and $\mathrm{II}$ of \eqref{EQQ3.12}. First we observe, since $a(z_0)\neq 0,$ from $K^2\lambda^p\leq a(z_0)\lambda^q$, the first and second conditions of \eqref{eq: pq-phase condition}, we get
\begin{align*}
 \miint{G_{\tau, \lambda}(z_0)} a(z_0)|Du|^q\,dz \leq 2 c_a \miint{G_{\tau, \lambda}(z_0)} a(z)|Du|^q\,dz&\leq 2c_a \left(\lambda^p+a(z_0)\lambda^q\right)\leq c\, a(z_0)\lambda^q.
\end{align*}
Thus, we have
\begin{align}\label{bound by q}
    \miint{G_{\tau, \lambda}(z_0)}|Du|^q\, dz \leq c\lambda^q.
\end{align}

\noindent\textbf{Estimate of $\mathrm{I}$:} Using the second condition of \eqref{eq: pq-phase condition} and H\"{o}lder's inequality, we obtain
\begin{align*}
    &\left(\frac{\lambda^2}{\lambda^p+a(z_0)\lambda^q}\miint{G_{\tau, \lambda}(z_0)}|Du|^{p-1}\,dz\right)^{\theta p}\leq \left(\frac{\lambda^2}{\lambda^p+a(z_0)\lambda^q}\right)^{\theta p}\left(\miint{G_{\tau, \lambda}(z_0)} |Du|^{\theta p}\,dz\right)^{p-1}\\
    &\leq c\left(\frac{\lambda^2}{\lambda^p+a(z_0)\lambda^q}\right)^{\theta p}(\lambda^p+a(z_0)\lambda^q)^{\theta (p-2)} \miint{G_{\tau, \lambda}(z_0)} |Du|^{\theta p}\,dz \leq c \miint{G_{\tau, \lambda}(z_0)}|Du|^{\theta p}\, dz,
\end{align*}
where we used
\begin{align*}
\left(\frac{\lambda^2}{\lambda^p+a(z_0)\lambda^q}\right)^{\theta p}(\lambda^p+a(z_0)\lambda^q)^{\theta (p-2)} \leq 1 .   
\end{align*}
Thus, we finally obtain
\begin{align*}
\left(\frac{\lambda^2}{\lambda^p+a(z_0)\lambda^q}\miint{G_{\tau, \lambda}(z_0)}|Du|^{p-1}\,dz\right)^{\theta p} \leq c \miint{G_{\tau, \lambda}(z_0)} |Du|^{\theta p}\,dz.    
\end{align*}

\noindent\textbf{Estimate of $\mathrm{II}$:} Similarly, using the first condition of \eqref{eq: pq-phase condition} and H\"{o}lder's inequality, we get
\begin{align*}
&\left(\frac{\lambda^2}{\lambda^p+a(z_0)\lambda^q}\miint{G_{\tau, \lambda}(z_0)}a(z)|Du|^{q-1}\,dz\right)^{\theta p}\\
&\leq (2c_a)^{\frac{\theta p}{q}}\left(\frac{\lambda^2 a(z_0)^{\frac{1}{q}}}{\lambda^p+a(z_0)\lambda^q}\right)^{\theta p}\left(\miint{G_{\tau, \lambda}(z_0)}a(z)^{\frac{q-1}{q}}|Du|^{q-1}\, dz\right)^{\theta p}\\
&\leq 2c_a \left(\frac{\lambda^2 a(z_0)^{\frac{1}{q}}}{\lambda^p+a(z_0)\lambda^q}\right)^{\theta p} \left(\miint{G_{\tau, \lambda}(z_0)}a(z)^{\theta}|Du|^{\theta q}\, dz\right)^{\frac{p(q-1)}{q}}\\
&=2c_a \left(\frac{\lambda^2 a(z_0)^{\frac{1}{q}}}{\lambda^p+a(z_0)\lambda^q}\right)^{\theta p} \left(\miint{G_{\tau, \lambda}(z_0)}a(z)^{\theta}|Du|^{\theta q}\, dz\right)^{\frac{p(q-1)}{q}-1}\left(\miint{G_{\tau, \lambda}(z_0)}a(z)^{\theta}|Du|^{\theta q}\, dz\right).
\end{align*}
Since $\frac{p(q-1)}{q}-1=\frac{pq-p-q}{q}\geq 0$ for $2\leq p\leq q$, using \eqref{bound by q} and denoting $\Lambda=\lambda^p+a(z_0)\lambda^q$ we get,
\begin{align*}
    2c_a&\left(\frac{\lambda^{2}a(z_0)^{1/q}}{\lambda^{p}+a(z_0)\lambda^{q}}\right)^{\theta p}
    \left(\miint{G_{\tau,\lambda}(z_0)} a(z)^{\theta}|Du|^{\theta q}\,dz\right)^{\frac{p(q-1)}{q}-1}
    \notag\\
    &\leq c\left(\frac{\lambda^{2}a(z_0)^{1/q}}{\Lambda}\right)^{\theta p}
        a(z_0)^{\theta\left(\frac{p(q-1)}{q}-1\right)}\,
        \lambda^{\theta q\left(\frac{p(q-1)}{q}-1\right)}
    \notag\\
    &= c\,\frac{\lambda^{\theta(pq-q+p)}\,a(z_0)^{\theta(p-1)}}{\Lambda^{\theta p}}
    = c\left(\frac{\lambda^{\,pq-q+p}\,a(z_0)^{p-1}}{\Lambda^{p}}\right)^{\theta}.
\end{align*}
Finally, using $\Lambda\geq a(z_0)\lambda^{q}$ and the case assumption $K^{2}\lambda^{p}\leq a(z_0)\lambda^{q}$, we obtain
\begin{align*}
    \frac{\lambda^{\,pq-q+p}\,a(z_0)^{p-1}}{\Lambda^{p}}
    \leq \frac{\lambda^{\,pq-q+p}\,a(z_0)^{p-1}}{\bigl(a(z_0)\lambda^{q}\bigr)^{p}}
    = \frac{1}{a(z_0)\lambda^{\,q-p}}
    \leq \frac{1}{K^{2}}\leq 1,
\end{align*}
so that
\begin{align*}
    2c_a\left(\frac{\lambda^{2}a(z_0)^{1/q}}{\lambda^{p}+a(z_0)\lambda^{q}}\right)^{\theta p}
    \left(\miint{G_{\tau,\lambda}(z_0)} a(z)^{\theta}|Du|^{\theta q}\,dz\right)^{\frac{p(q-1)}{q}-1}
    \leq c.
\end{align*}
Thus, combining the above estimates, we obtain \eqref{Eq 3.10}.

\noindent\textbf{Case II:} $K^2\lambda^p\geq a(z_0)\lambda^q.$ Again, we estimate $\mathrm{I}$ and $\mathrm{II}$ of \eqref{EQQ3.12}. Using this and the second assumption of \eqref{eq: pq-phase condition}, we get
\begin{align}\label{bound by lambda-p}
    \miint{G_{\tau, \lambda}(z_0)}|Du|^p\,dz< \lambda^p+a(z_0)\lambda^q\leq c \lambda^p,
\end{align}
The estimates in this case are similar to those in Case I. For the sake of completeness, we give the details.

\noindent \textbf{Estimate of $\mathrm{I}$:} Using \eqref{bound by lambda-p} and H\"{o}lder's inequality, we obtain
\begin{align*}
    &\left(\frac{\lambda^2}{\lambda^p+a(z_0)\lambda^q}\miint{G_{\tau, \lambda}(z_0)}|Du|^{p-1}\,dz\right)^{\theta p}\\
    & \leq c \left(\frac{\lambda^2}{\lambda^p+a(z_0)\lambda^q}\right)^{\theta p}\lambda^{\theta p (p-2)} \miint{G_{\tau, \lambda}(z_0)}|Du|^{\theta p}\, dz\\
    &=c\left(\frac{\lambda^p}{\lambda^p+a(z_0)\lambda^q}\right)^{\theta p} \miint{G_{\tau, \lambda}(z_0)}|Du|^{\theta p}\, dz \leq c\miint{G_{\tau, \lambda}(z_0)}|Du|^{\theta p}\, dz.
\end{align*}
\textbf{Estimate of $\mathrm{II}$:} Similarly, we estimate
\begin{align*}
 &\left(\frac{\lambda^2}{\lambda^p+a(z_0)\lambda^q}\miint{G_{\tau, \lambda}(z_0)}a(z)|Du|^{q-1}\,dz\right)^{\theta p}\\
&\leq2c_a \left(\frac{\lambda^2 a(z_0)^{\frac{1}{q}}}{\lambda^p+a(z_0)\lambda^q}\right)^{\theta p} \left(\miint{G_{\tau, \lambda}(z_0)}a(z)^{\theta}|Du|^{\theta q}\, dz\right)^{\frac{p(q-1)}{q}-1}\left(\miint{G_{\tau, \lambda}(z_0)}a(z)^{\theta}|Du|^{\theta q}\, dz\right)\\
    &\leq 2c_a\left(\frac{\lambda^{2}a(z_0)^{1/q}}{\Lambda}\right)^{\theta p}\Lambda^{\theta\left(\frac{p(q-1)}{q}-1\right)}
        \left(\miint{G_{\tau,\lambda}(z_0)} a(z)^{\theta}|Du|^{\theta q}\,dz\right)
    \notag\\
    &= 2c_a\left[\frac{\lambda^{2p}\,a(z_0)^{p/q}}{\Lambda^{(p+q)/q}}\right]^{\theta}
        \left(\miint{G_{\tau,\lambda}(z_0)} a(z)^{\theta}|Du|^{\theta q}\,dz\right).
\end{align*}
Using $\Lambda\geq\lambda^{p}$ and the case assumption $a(z_0)\lambda^{q}\leq K^{2}\lambda^{p}$,
\begin{align*}
    \frac{\lambda^{2p}\,a(z_0)^{p/q}}{\Lambda^{(p+q)/q}}
    \leq\frac{\lambda^{2p}\,a(z_0)^{p/q}}{\lambda^{p(p+q)/q}}
    =\bigl(a(z_0)\lambda^{\,q-p}\bigr)^{p/q}\leq K^{2p/q},
\end{align*}
hence
\begin{align*}
    \left(\frac{\lambda^{2}}{\lambda^{p}+a(z_0)\lambda^{q}}
        \miint{G_{\tau,\lambda}(z_0)} a(z)|Du|^{q-1}\,dz\right)^{\theta p}
    \leq 2c_aK^{2p/q}\,\miint{G_{\tau,\lambda}(z_0)} a(z)^{\theta}|Du|^{\theta q}\,dz.
\end{align*}
Thus, combining the above estimates, we obtain \eqref{Eq 3.10}.

Next, we prove the estimate \eqref{Eq: 3.11}. Again, it follows from \eqref{eq : parabolic poincare inequality in Section 3_1} of Lemma~\ref{lem : parabolic poincare inequality in Section 3} that
\begin{align}\label{EQQ3.14}
\miint{G_{\tau, \lambda}(z_0)}\frac{|u-(u)_{G_{\tau, \lambda}(z_0)}|^{\theta q}}{\tau^{\theta q}}\,dz &\leq c\miint{G_{\tau, \lambda}(z_0)}|Du|^{\theta q}\, dz+ c\left(\frac{\lambda^2}{\lambda^p+a(z_0)\lambda^q}\miint{G_{\tau, \lambda}(z_0)}|Du|^{p-1}\,dz\right)^{\theta q}\nonumber\\
&+c\left(\frac{\lambda^2}{\lambda^p+a(z_0)\lambda^q}\miint{G_{\tau, \lambda}(z_0)}a(z)|Du|^{q-1}\,dz\right)^{\theta q}\nonumber\\
&=c\miint{G_{\tau, \lambda}(z_0)}|Du|^{\theta q}\, dz+\mathrm{I}+\mathrm{II}.
\end{align}

\noindent \textbf{Case I:} $K^2\lambda^p\leq a(z_0)\lambda^q$. We estimate $\mathrm{I}$ and $\mathrm{II}$ of \eqref{EQQ3.14}.

\noindent \textbf{Estimate of $\mathrm{I}$:} Using H\"{o}lder's inequality and \eqref{bound by q} with $p-1<\theta q$, we see that
\begin{align*}
&\left(\frac{\lambda^2}{\lambda^p+a(z_0)\lambda^q}\miint{G_{\tau, \lambda}(z_0)}|Du|^{p-1}\,dz\right)^{\theta q}\\
&\leq c\left(\frac{\lambda^2}{\lambda^p+a(z_0)\lambda^q}\right)^{\theta q}\left(\miint{G_{\tau, \lambda}(z_0)}|Du|^{\theta q}\,dz\right)^{p-1}\\
&\leq c \left(\frac{\lambda^2}{\lambda^p+a(z_0)\lambda^q}\right)^{\theta q}\left(\miint{G_{\tau, \lambda}(z_0)}|Du|^q\, dz\right)^{\theta(p-2)} \miint{G_{\tau, \lambda}(z_0)}|Du|^{\theta q}\, dz\\
&\leq c\left(\frac{\lambda^p}{\lambda^p+a(z_0)\lambda^q}\right)^{\theta q} \miint{G_{\tau, \lambda}(z_0)}|Du|^{\theta q}\, dz\leq c \miint{G_{\tau, \lambda}(z_0)}|Du|^{\theta q}\, dz.
\end{align*}
\textbf{Estimate of $\mathrm{II}$:} Similarly,  using the first condition of \eqref{eq: pq-phase condition} and $q-1<\theta q$, we can estimate
\begin{align*}
&\left(\frac{\lambda^2}{\lambda^p+a(z_0)\lambda^q}\miint{G_{\tau, \lambda}(z_0)}a(z)|Du|^{q-1}\,dz\right)^{\theta q}\\
&\leq c\left(\frac{\lambda^2 a(z_0)}{\lambda^p+a(z_0)\lambda^q}\right)^{\theta q}\left(\miint{G_{\tau, \lambda}(z_0)}|Du|^q\,dz\right)^{\theta (q-2)} \miint{G_{\tau, \lambda}(z_0)}|Du|^{\theta q}\,dz\\
&\leq c \left(\frac{a(z_0)\lambda^q}{\lambda^p+a(z_0)\lambda^q}\right)^{\theta q} \miint{G_{\tau, \lambda}(z_0)}|Du|^{\theta q}\,dz\leq c \miint{G_{\tau, \lambda}(z_0)}|Du|^{\theta q}\,dz.
\end{align*}
Combining the above estimates completes the proof of \eqref{Eq: 3.11}.

\noindent \textbf{Case II:} $K^2\lambda^p\geq a(z_0)\lambda^q$. We estimate $\mathrm{I}$ and $\mathrm{II}$ of \eqref{EQQ3.14} in this case. 

\noindent\textbf{Estimate of $\mathrm{I}$:} Using H\"{o}lder's inequality and \eqref{bound by lambda-p}, we get
\begin{align*}
&\left(\frac{\lambda^2}{\lambda^p+a(z_0)\lambda^q}\miint{G_{\tau, \lambda}(z_0)}|Du|^{p-1}\,dz\right)^{\theta q}\\
&\leq c\left(\frac{\lambda^2}{\lambda^p+a(z_0)\lambda^q}\right)^{\theta q}\left(\miint{G_{\tau, \lambda}(z_0)}|Du|)^{\theta p}\,dz\right)^{\frac{(p-1)q}{p}}\\
&=c\left(\frac{\lambda^2}{\lambda^p+a(z_0)\lambda^q}\,\right)^{\theta q}\left(\miint{G_{\tau, \lambda}(z_0)}|Du|^{\theta p}\,dz\right)^{\frac{(p-2)q}{p}}\left(\miint{G_{\tau, \lambda}(z_0)}|Du|^{\theta p}\,dz\right)^{\frac{q}{p}}\\
&\leq c\left(\frac{\lambda^2}{\lambda^p+a(z_0)\lambda^q}\right)^{\theta q}\lambda^{\theta q(p-2)} \miint{G_{\tau, \lambda}(z_0)}|Du|^{\theta q} dz\\
&=c\left(\frac{\lambda^p}{\lambda^p+a(z_0)\lambda^q}\right)^{\theta q} \miint{G_{\tau, \lambda}(z_0)}|Du|^{\theta q}\,dz\leq c\miint{G_{\tau, \lambda}(z_0)}|Du|^{\theta q} dz.
\end{align*}

\noindent \textbf{Estimate of $\mathrm{II}$:} Similarly, we estimate
\begin{align*}
&\left(\frac{\lambda^2}{\lambda^p+a(z_0)\lambda^q}\miint{G_{\tau, \lambda}(z_0)}a(z)|Du|^{q-1}\,dz\right)^{\theta q}\\
&\leq c\left(\frac{\lambda^2}{\lambda^p+a(z_0)\lambda^q}\right)^{\theta q}a(z_0)^{\theta}\left(\miint{G_{\tau, \lambda}(z_0)}a(z)^{\frac{q-1}{q}}|Du|^{q-1}\, dz\right)^{\theta q}\\
&\leq c \left(\frac{\lambda^2}{\lambda^p+a(z_0)\lambda^q}\right)^{\theta q}a(z_0)^{\theta}\left(\miint{G_{\tau, \lambda}(z_0)}a(z)^{\theta}|Du|^{\theta q}dz\right)^{q-1}\\
&\leq c \left(\frac{\lambda^2}{\lambda^p+a(z_0)\lambda^q}\right)^{\theta q}(\lambda^p+a(z_0)\lambda^q)^{\theta (q-2)}a(z_0)^{2\theta} \miint{G_{\tau, \lambda}(z_0)}|Du|^{\theta q}dz.
\end{align*}
As in the previous case, using $K^2\lambda^p\geq a(z_0)\lambda^q,$ we obtain
\begin{align*}
 \left(\frac{\lambda^2}{\lambda^p+a(z_0)\lambda^q}\right)^{\theta q}(\lambda^p+a(z_0)\lambda^q)^{\theta (q-2)}a(z_0)^{2\theta} \leq 1.   
\end{align*}
It follows that
\begin{align*}
    \left(\frac{\lambda^2}{\lambda^p+a(z_0)\lambda^q}\miint{G_{\tau, \lambda}(z_0)}a(z)|Du|^{q-1}\,dz\right)^{\theta q} \leq c \miint{G_{\tau, \lambda}(z_0)} |Du|^{\theta q} dz.
\end{align*}
Thus, combining the above estimates, we obtain \eqref{Eq: 3.11} for this case.
\end{proof}
Next, we prove the following:
\begin{lemma}\label{Lemma 3.14}
Let $u$ be a weak solution to \eqref{eq:main_double}. Then for $G_{c_v(2 \rho),\lambda}(z_0) \subset \Omega_T$ with \eqref{eq: pq-phase condition}, there exists a constant $c=c(\textnormal{\texttt{data}}) \geq 1$ such that
\begin{align*}
S(u)_{G_{2\rho, \lambda}(z_0)}=\sup _{I^{(p,q)}_{2 \rho,\lambda}(t_0)} \dashint_{B_{2 \rho}(x_0)} \frac{\left|u-(u)_{G_{2 \rho,\lambda}(z_0)}\right|^2}{(2 \rho)^2}\; dx \leq c \lambda^2 .
\end{align*}
\end{lemma}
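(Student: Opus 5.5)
The plan is to copy the proof of Lemma~\ref{LEMMA7.5}, replacing the $p$-intrinsic cylinders by the $(p,q)$-intrinsic cylinders $G_{\rho,\lambda}$. Set $\Lambda=\lambda^p+a(z_0)\lambda^q$. For $2\rho\le\rho_1<\rho_2\le4\rho$, apply the Caccioppoli inequality (Lemma~\ref{lem: energy inequality}) with
\[
U_{R,S}=G_{\rho_2,\lambda}(z_0),\qquad U_{r,\tau}=G_{\rho_1,\lambda}(z_0),
\]
so that $S=\lambda^2\rho_2^2/\Lambda$ and $\tau=\lambda^2\rho_1^2/\Lambda$. Multiplying through by $\tau/\rho_1^2=\lambda^2/\Lambda$ gives
\begin{align*}
\frac{\Lambda}{\lambda^2}\sup_{J_{\rho_1,\lambda}(t_0)}\dashint_{B_{\rho_1}(x_0)}\frac{|u-(u)_{G_{\rho_1,\lambda}}|^2}{\rho_1^2}\,dx
&\le c\Big(\frac{\rho_2}{\rho_2-\rho_1}\Big)^q\miint{G_{\rho_2,\lambda}(z_0)}H\Big(z,\frac{|u-(u)_{G_{\rho_2,\lambda}}|}{\rho_2}\Big)\,dz\\
&\quad+c\,\frac{\Lambda}{\lambda^2}\Big(\frac{\rho_2}{\rho_2-\rho_1}\Big)^2\miint{G_{\rho_2,\lambda}(z_0)}\frac{|u-(u)_{G_{\rho_2,\lambda}}|^2}{\rho_2^2}\,dz
=:\mathrm{I}+\mathrm{II}.
\end{align*}
The goal is to show $\mathrm{I}\lesssim(\rho_2/(\rho_2-\rho_1))^q\Lambda$ and to make $\mathrm{II}$ absorbable. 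Dividing by $\Lambda/\lambda^2$ then yields an iterative inequality of the form
\[
S_{\rho_1}\le\tfrac12 S_{\rho_2}+c\Big[\Big(\tfrac{\rho_2}{\rho_2-\rho_1}\Big)^q+\Big(\tfrac{\rho_2}{\rho_2-\rho_1}\Big)^4\Big]\lambda^2,
\]
and Lemma~\ref{iter_lemma} gives the bound at radius $2\rho$. Finally, the factor $2^{-2}$ and the replacement of the mean value on $G_{\rho_1}$ by that on $G_{2\rho}$ only cost constants.

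\textbf{Estimate of $\mathrm I$.} By the first condition in \eqref{eq: pq-phase condition}, $a(z)\le 2c_a a(z_0)$ on the relevant cylinder. Strictly, that condition is stated on $G_{\rho,\lambda}$ only, so I would either assume it holds on $G_{4\rho,\lambda}$ or argue as in the $\mathcal Z^\kappa$ lemma above. Granting it, $\mathrm I$ splits into a $p$-term and an $a(z_0)$ times $q$-term. Lemma~\ref{Lemma 3.13} with $\theta=1$, namely \eqref{Eq 3.10} and \eqref{Eq: 3.11}, bounds these by
\[
\miint{G_{\rho_2,\lambda}}H(z,|Du|)\,dz\qquad\text{and}\qquad a(z_0)\miint{G_{\rho_2,\lambda}}|Du|^q\,dz\le 2c_a\miint{G_{\rho_2,\lambda}}a(z)|Du|^q\,dz,
\]
respectively. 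The second condition in \eqref{eq: pq-phase condition} then bounds both by $c\Lambda$.

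\textbf{Estimate of $\mathrm{II}$.} I would use the Gagliardo--Nirenberg inequality (Lemma~\ref{lem : lemma 2.1}) slicewise with $\sigma=r=2$ and $\vartheta=\tfrac12$. The choice of $s$ depends on the phase.
\begin{itemize}
\item In Case II ($a(z_0)\lambda^q\le K^2\lambda^p$), take $s=p$. Together with \eqref{Eq 3.10} this gives $\mathrm{II}\lesssim\frac{\Lambda}{\lambda^2}(\cdots)^2\,\lambda\,S_{\rho_2}^{1/2}$, since $\miint|Du|^p\le c\lambda^p$.
\item In Case I ($K^2\lambda^p\le a(z_0)\lambda^q$), take $s=q$ (admissible since $q\ge2$). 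Together with \eqref{Eq: 3.11} and \eqref{bound by q} this again gives $\mathrm{II}\lesssim\frac{\Lambda}{\lambda^2}(\cdots)^2\,\lambda\,S_{\rho_2}^{1/2}$.
\end{itemize}
Dividing by $\Lambda/\lambda^2$ and applying Young's inequality, $\lambda S_{\rho_2}^{1/2}\le\tfrac12 S_{\rho_2}+c\lambda^2$, gives the iterative inequality above.

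The main obstacle is the two-phase splitting in the estimate of $\mathrm{II}$. Both the Poincaré inputs and the Gagliardo--Nirenberg exponent must be chosen so that exactly one factor $\lambda$ appears. In Case I this relies on the $L^q$ bound \eqref{bound by q}, which in turn requires the comparability $a\approx a(z_0)$ on the larger cylinder $G_{4\rho,\lambda}$.
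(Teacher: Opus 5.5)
Your proposal is correct and follows the paper's proof essentially step for step. Both use the Caccioppoli inequality between $G_{\rho_1,\lambda}(z_0)$ and $G_{\rho_2,\lambda}(z_0)$, Lemma~\ref{Lemma 3.13} with $\theta=1$ together with $a\le 2c_a a(z_0)$ for the $H$-term, Lemma~\ref{lem : lemma 2.1} with $s=q$ in Case I and $s=p$ in Case II to produce $\lambda\,S_{\rho_2}^{1/2}$, and then Young's inequality and Lemma~\ref{iter_lemma}. The caveat you raise is equally implicit in the paper: comparability of $a$ with $a(z_0)$ is assumed only on $G_{\rho,\lambda}(z_0)$ but used on $G_{4\rho,\lambda}(z_0)$, including for \eqref{bound by q}. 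In the stopping-time setting it can be repaired from \eqref{eq:Z_kappa} and $a(z_0)\ge\widetilde M\rho^{\kappa}$ by taking $\widetilde M$ large relative to $c_a4^{\kappa}$.
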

\begin{proof}
Let $2\rho \leq \rho_1<\rho_2 \leq 4\rho$. By Lemma~\ref{lem: energy inequality}, there exists a constant $c$ depending on $N,p,q,\nu$ and $L$ such that 
\begin{align}\label{eq : 3.12}
&\left(\lambda^{p-2}+a(z_0)\lambda^{q-2}\right)S(u)_{G_{\rho_1, \lambda}(z_0)}\nonumber\\
&\leq \frac{c\rho_2^q}{(\rho_2-\rho_1)^q}\miint{G_{\rho_2,\lambda}(z_0)}\left( \frac{|u-(u)_{G_{\rho_2,\lambda}(z_0)}|^p}{\rho^p_2}+a(z)\frac{|u-(u)_{G_{\rho_2,\lambda}(z_0)}|^q}{\rho^q_2} \right) dz \nonumber\\
&\quad + \frac{c\rho_2^2 \left(\lambda^{p-2}+a(z_0)\lambda^{q-2}\right)}{(\rho_2-\rho_1)^2}\miint{G_{\rho_2,\lambda}(z_0)}\frac{|u-(u)_{G_{\rho_2,\lambda}(z_0)}|^2}{\rho_2^2}\; dz
\end{align}
Using Lemma ~\ref{Lemma 3.13} with $\theta =1$, we estimate the first term of \eqref{eq : 3.12} as follows:
\begin{align*}
 &\miint{G_{\rho_2,\lambda}(z_0)}\left( \frac{|u-(u)_{G_{\rho_2,\lambda}(z_0)}|^p}{\rho^p_2}+a(z)\frac{|u-(u)_{G_{\rho_2,\lambda(z_0)}}|^q}{\rho^q_2} \right) dz\\
 &\leq c\miint{G_{\rho_2,\lambda}(z_0)} \left(\frac{|u-(u)_{G_{\rho_2,\lambda}(z_0)}|^p}{\rho^p_2}+a(z_0)\frac{|u-(u)_{G_{\rho_2,\lambda}(z_0)}|^q}{\rho^q_2} \right) dz\\
 &\overset{\text{Lemma}\,\ref{Lemma 3.13}}{\leq} c\miint{G_{\rho_2,\lambda}(z_0)}|Du|^p+a(z_0)|Du|^q\,dz\\
 &\leq c \miint{G_{\rho_2,\lambda}(z_0)}|Du|^p+a(z)|Du|^q\,dz\leq c\left(\lambda^p+a(z_0)\lambda^q\right).
\end{align*}
For the case $K^2\lambda^p\leq a(z_0)\lambda^q$,  using Lemma~\ref{lem : lemma 2.1} with $\sigma =2, s=q, r=2, \vartheta=\frac{1}{2},$ we get
\begin{align*}
   \miint{G_{\rho_2,\lambda}(z_0)}\frac{|u-(u)_{G_{\rho_2,\lambda}(z_0)}|^2}{\rho_2^2}\; dz \leq c\left(\miint{G_{\rho_2, \lambda}(z_0)}\left(\frac{|u-(u)_{G_{\rho_2,\lambda}(z_0)}|^q}{\rho_2^q}+|Du|^q\right)dz\right)^{\frac{1}{q}}\left(S(u)_{G_{\rho_2, \lambda}(z_0)}\right)^{\frac{1}{2}}.
\end{align*}
Now using \eqref{Eq: 3.11} of Lemma ~\ref{Lemma 3.13} and \eqref{bound by q}, we obtain
\begin{align*}
\left(\miint{G_{\rho_2, \lambda}(z_0)}\left(\frac{|u-(u)_{G_{\rho_2,\lambda}(z_0)}|^q}{\rho_2^q}+|Du|^q\right)dz\right)^{\frac{1}{q}} \leq c \lambda.    
\end{align*}
On the other hand, for the case $K^2\lambda^p\geq a(z_0)\lambda^q$, similarly,  we derive
\begin{align*}
   \miint{G_{\rho_2,\lambda}(z_0)}\frac{|u-(u)_{G_{\rho_2,\lambda}(z_0)}|^2}{\rho_2^2}\; dz \leq c\left(\miint{G_{\rho_2, \lambda}(z_0)}\left(\frac{|u-(u)_{G_{\rho_2,\lambda}(z_0)}|^p}{\rho_2^p}+|Du|^p\right)dz\right)^{\frac{1}{p}}\left(S(u)_{G_{\rho_2, \lambda}(z_0)}\right)^{\frac{1}{2}}.
\end{align*}
Now using \eqref{Eq 3.10} of Lemma ~\ref{Lemma 3.13} and \eqref{bound by lambda-p}, we obtain
\begin{align*}
    \left(\miint{G_{\tau, \lambda}(z_0)}\left(\frac{|u-(u)_{G_{\rho_2,\lambda}(z_0)}|^p}{\rho_2^p}+|Du|^p\right)dz\right)^{\frac{1}{p}} \leq c \lambda.
\end{align*}
Thus, together with the above estimates, we get
    \begin{align*}
     \miint{G_{\rho_2,\lambda}(z_0)}\frac{|u-(u)_{G_{\rho_2,\lambda}(z_0)}|^2}{\rho_2^2}\; dz \leq \lambda \left(S(u)_{G_{2\rho, \lambda}(z_0)}\right)^{\frac{1}{2}}.
    \end{align*}
    Now applying Young's inequality and the iteration lemma (Lemma~\ref{iter_lemma}), we complete the proof.
\end{proof}
\begin{lemma}\label{Lemma 3.15}
Let $u$ be a weak solution to \eqref{eq:main_double}. Then for $G_{c_v(2 \rho),\lambda}\left(z_0\right) \subset \Omega_T$ with \eqref{eq: pq-phase condition}, there exist constants $c=c(\textnormal{\texttt{data}}) \geq 1$ and $\theta_0=\theta_0(N, p, q) \in(0,1)$ such that for any $\theta \in\left(\theta_0, 1\right)$, we have
\begin{align}\label{EQQ3.15}
&\miint{G_{2 \rho,\lambda}\left(z_0\right)} H\left(z,\frac{\left|u-(u)_{G_{2\rho, \lambda}(z_0)}\right|}{2 \rho}\right) dz \leq c\left(\lambda^p+a(z_0)\lambda^q\right)^{1-\theta}\left(\miint{G_{2 \rho,\lambda}\left(z_0\right)} H(z, |Du|)^{\theta}\, dz\right). 
\end{align}
\end{lemma}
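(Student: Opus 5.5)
Write $\Lambda:=\lambda^p+a(z_0)\lambda^q$ and $w:=u-(u)_{G_{2\rho,\lambda}(z_0)}$. The plan mirrors the proof of Lemma~\ref{LEM5.2}, with $p$-intrinsic tools replaced by their $(p,q)$ counterparts: Lemma~\ref{Lemma 3.13} for Poincar\'e and Lemma~\ref{Lemma 3.14} for the sup bound. By the first condition of \eqref{eq: pq-phase condition}, $a(z)\le 2c_a a(z_0)$ on $G_{2\rho,\lambda}(z_0)$ (after shrinking the cylinder in the usual way if necessary). Hence it suffices to bound separately
\[
\mathrm{I}:=\miint{G_{2\rho,\lambda}(z_0)}\Bigl|\frac{w}{2\rho}\Bigr|^p dz,\qquad \mathrm{II}:=a(z_0)\miint{G_{2\rho,\lambda}(z_0)}\Bigl|\frac{w}{2\rho}\Bigr|^q dz .
\]

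For $\mathrm{I}$, apply Lemma~\ref{lem : lemma 2.1} slicewise in time with $\sigma=p$, $s=\theta p$, $\vartheta=\theta$, $r=2$. This choice is admissible for $\theta\ge N/(N+2)$. Integrating in time gives
\[
\mathrm{I}\lesssim \miint{G_{2\rho,\lambda}(z_0)}\Bigl(\Bigl|\tfrac{w}{2\rho}\Bigr|^{\theta p}+|Du|^{\theta p}\Bigr)dz\cdot \bigl(S(u)_{G_{2\rho,\lambda}(z_0)}\bigr)^{\frac{(1-\theta)p}{2}} .
\]
By \eqref{Eq 3.10} the first factor is $\lesssim \miint{} H(z,|Du|)^\theta dz$. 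By Lemma~\ref{Lemma 3.14} the second factor is $\lesssim \lambda^{(1-\theta)p}\le \Lambda^{1-\theta}$.

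For $\mathrm{II}$, apply Lemma~\ref{lem : lemma 2.1} with $\sigma=q$, $s=\theta q$, $\vartheta=\theta$, $r=2$, admissible for $\theta\ge N/(N+2)$ since $q\ge2$. Using \eqref{Eq: 3.11} and Lemma~\ref{Lemma 3.14}, this yields
\[
\mathrm{II}\lesssim a(z_0)\lambda^{(1-\theta)q}\miint{} |Du|^{\theta q}\,dz = \bigl(a(z_0)\lambda^q\bigr)^{1-\theta}\miint{} \bigl(a(z_0)|Du|^q\bigr)^{\theta} dz .
\]
Now $(a(z_0)\lambda^q)^{1-\theta}\le\Lambda^{1-\theta}$, and $a(z_0)^\theta\le (2c_a)^\theta a(z)^\theta$ pointwise. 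Therefore $\mathrm{II}\lesssim \Lambda^{1-\theta}\miint{} H(z,|Du|)^\theta dz$. Taking $\theta_0:=\max\{N/(N+2),(q-1)/q\}$, so that Lemma~\ref{Lemma 3.13} also applies, and adding the two bounds gives \eqref{EQQ3.15}.

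The main obstacle is matching the powers of $\lambda$ in $\mathrm{II}$: the $q$-term must be controlled by \eqref{Eq: 3.11} together with the full $q$-power of the sup bound. This forces $\vartheta=\theta$ with $s=\theta q$ in the Gagliardo--Nirenberg step.

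A second point needs care. Lemma~\ref{Lemma 3.13} is proved by splitting into the cases $K^2\lambda^p\lessgtr a(z_0)\lambda^q$, but here it is used only as a black box. This is legitimate because its constants do not depend on which case holds. In the case $K^2\lambda^p\ge a(z_0)\lambda^q$ one could alternatively bound $\mathrm{II}$ by $K^2\lambda^{p-q}\lambda^{q}$-type estimates through the $p$-term, but the uniform argument above avoids that.
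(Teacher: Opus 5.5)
Your proposal is correct and is essentially the paper's own argument: Lemma~\ref{lem : lemma 2.1} with $(\sigma,s,\vartheta,r)=(p,\theta p,\theta,2)$ for the $p$-term and $(q,\theta q,\theta,2)$ for the $q$-term, then \eqref{Eq 3.10}, \eqref{Eq: 3.11} and the sup bound of Lemma~\ref{Lemma 3.14}, and finally the two-sided comparability $a(z)\approx a(z_0)$ to pass from $a(z_0)^\theta|Du|^{\theta q}$ to $H(z,|Du|)^\theta$. The one loose phrase is ``shrinking the cylinder'': the statement lives on $G_{2\rho,\lambda}(z_0)$, so the comparability has to hold on that cylinder itself rather than be obtained by shrinking it; the paper also uses it there without comment, as it already does in Lemma~\ref{Lemma 3.13} on $G_{\tau,\lambda}$ for $\tau\in[2\rho,4\rho]$.
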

\begin{proof}
 We use Lemma~\ref{lem : lemma 2.1} with $\sigma=p, s=\theta p, \vartheta= \theta, r=2$ to deduce
 \begin{align*}
    &\miint{G_{2\rho, \lambda}(z_0)}\frac{\left|u-(u)_{G_{2\rho, \lambda}(z_0)}\right|^p}{(2 \rho)^p}\, dz\\
     &\quad \leq c\left(\miint{G_{2\rho, \lambda}(z_0)}\frac{\left|u-(u)_{G_{2\rho, \lambda}(z_0)}\right|^{\theta p}}{(2 \rho)^{\theta p}}+|Du|^{\theta p}\, dz\right)\left(S(u)_{G_{2\rho, \lambda}(z_0)}\right)^{\frac{(1-\theta)p}{2}}.
 \end{align*}
 Now we use Lemma~\ref{Lemma 3.13} and Lemma ~\ref{Lemma 3.14} to get
 \begin{align*}
&\left(\miint{G_{2\rho, \lambda}(z_0)}\frac{\left|u-(u)_{G_{2\rho, \lambda}(z_0)}\right|^{\theta p}}{(2 \rho)^{\theta p}}+|Du|^{\theta p}\, dz\right)\left(S(u)_{G_{2\rho, \lambda}(z_0)}\right)^{\frac{(1-\theta)p}{2}}\\
&\quad \leq c \lambda^{p(1-\theta)}\miint{G_{2\rho, \lambda}(z_0)}H(z, |Du|)^{\theta}\, dz.
 \end{align*}
 Hence, it follows that
 \begin{align*}
 \miint{G_{2\rho, \lambda}(z_0)}\frac{\left|u-(u)_{G_{2\rho, \lambda}(z_0)}\right|^p}{(2 \rho)^p}\, dz\leq c \lambda^{p(1-\theta)}\miint{G_{2\rho, \lambda}(z_0)}H(z, |Du|)^{\theta}\, dz.    
 \end{align*}
 Similarly, using Lemma~\ref{lem : lemma 2.1} with $\sigma=q, s=\theta q, \vartheta= \theta, r=2,$ we get
 \begin{align*}
 \miint{G_{2\rho, \lambda}(z_0)}a(z)\frac{\left|u-(u)_{G_{2\rho, \lambda}(z_0)}\right|^q}{(2 \rho)^q}\, dz\leq c (a(z_0)\lambda^q)^{1-\theta}\miint{G_{2\rho, \lambda}(z_0)}a(z)^{\theta}|Du|^{\theta q}\, dz.        
 \end{align*}
 Combining the above two estimates and using H\"{o}lder's inequality, we get the desired estimate \eqref{EQQ3.15}.
\end{proof}
\begin{lemma}
    Let $u$ be a weak solution to \eqref{eq:main_double}. Then for $G_{c_v(2 \rho),\lambda}\left(z_0\right) \subset \Omega_T$ with \eqref{eq: pq-phase condition}, there exist constants $c=c(\textnormal{\texttt{data}}) \geq 1$ and $\theta_0=\theta_0(N,p,q) \in (0,1)$ such that for any $\theta\in (\theta_0,1),$
    \begin{align*}
        \miint{G_{\rho,\lambda}(z_0)} H(z,|Du|)\; dz\leq c \left(\miint{G_{2\rho,\lambda}(z_0)} H(z,|Du|)^\theta\;dz\right)^\frac{1}{\theta}.
    \end{align*}
\end{lemma}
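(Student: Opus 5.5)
The argument mirrors Lemma~\ref{LEM5.4}, now carried out on the $(p,q)$-intrinsic cylinders. Write $\Lambda:=\lambda^p+a(z_0)\lambda^q$.

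\textbf{Step 1 (Caccioppoli).} I would first apply Lemma~\ref{lem: energy inequality} with $R=2\rho$, $r=\rho$, $S=4\lambda^2\rho^2/\Lambda$ and $\tau=\lambda^2\rho^2/\Lambda$, so that $U_{R,S}(z_0)=G_{2\rho,\lambda}(z_0)$ and $U_{r,\tau}(z_0)=G_{\rho,\lambda}(z_0)$. This gives
\begin{align*}
\miint{G_{\rho,\lambda}(z_0)} H(z,|Du|)\,dz \lesssim \miint{G_{2\rho,\lambda}(z_0)} H\Bigl(z,\tfrac{|u-(u)_{G_{2\rho,\lambda}(z_0)}|}{2\rho}\Bigr)dz + \frac{\Lambda}{\lambda^2}\miint{G_{2\rho,\lambda}(z_0)}\frac{|u-(u)_{G_{2\rho,\lambda}(z_0)}|^2}{(2\rho)^2}\,dz .
\end{align*}

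\textbf{Step 2 (first term).} Lemma~\ref{Lemma 3.15} bounds it by $c\Lambda^{1-\theta}\miint{G_{2\rho,\lambda}} H(z,|Du|)^\theta\,dz$. Young's inequality with exponents $\frac1{1-\theta}$ and $\frac1\theta$ then turns this into
\[
\varepsilon\Lambda + c(\varepsilon)\Bigl(\miint{G_{2\rho,\lambda}} H^\theta\Bigr)^{1/\theta}.
\]

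\textbf{Step 3 (second term, the main obstacle).} I would apply Lemma~\ref{lem : lemma 2.1} with $\sigma=2$, $r=2$, $\vartheta=\tfrac12$, splitting according to the dichotomy already used in Lemma~\ref{Lemma 3.13}.
\begin{itemize}
\item[(i)] Case $K^2\lambda^p\ge a(z_0)\lambda^q$: take $s=\theta p$. Combined with \eqref{Eq 3.10} and Lemma~\ref{Lemma 3.14}, this bounds the average by
\[
c\,\lambda\Bigl(\miint{G_{2\rho,\lambda}} H^\theta\Bigr)^{1/(\theta p)}.
\]
Since $\Lambda\le c\lambda^p$ here, the prefactor satisfies $\Lambda/\lambda^2\lesssim\lambda^{p-2}$. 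The whole term is then at most $c\lambda^{p-1}(\cdots)^{1/(\theta p)}$, and Young's inequality with exponents $(\frac p{p-1},p)$ yields $\varepsilon\lambda^p+c(\cdots)^{1/\theta}$.
\item[(ii)] Case $K^2\lambda^p\le a(z_0)\lambda^q$: take $s=\theta q$ and use \eqref{Eq: 3.11}. This bounds the average by $c\lambda\bigl(\miint{G_{2\rho,\lambda}}|Du|^{\theta q}\bigr)^{1/(\theta q)}$. Here $\Lambda\simeq a(z_0)\lambda^q$, and the first condition in \eqref{eq: pq-phase condition} gives $a(z_0)^\theta|Du|^{\theta q}\le (2c_a)^\theta H(z,|Du|)^\theta$. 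Hence the term is at most
\[
c\,a(z_0)\lambda^{q-1}\,a(z_0)^{-1/q}\Bigl(\miint{G_{2\rho,\lambda}} H^\theta\Bigr)^{1/(\theta q)}
= c\,\bigl(a(z_0)\lambda^q\bigr)^{(q-1)/q}\Bigl(\miint{G_{2\rho,\lambda}} H^\theta\Bigr)^{1/(\theta q)} .
\]
Young's inequality with exponents $(\frac q{q-1},q)$ again gives $\varepsilon\Lambda+c(\cdots)^{1/\theta}$.
\end{itemize}
I expect this step to be the delicate one. The point is that the exponents of $\lambda$ and $a(z_0)$ balance exactly in each case, and checking this requires the comparability $a\simeq a(z_0)$ on $G_{2\rho,\lambda}$. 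I would first confirm that this comparability holds on the doubled cylinder, for instance from the lemma preceding Theorem~\ref{holder cts thm} together with $\rho<1$ and $\kappa$-control.

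\textbf{Step 4 (absorption).} Collecting the bounds gives
\[
\miint{G_{\rho,\lambda}} H\,dz\le 2\varepsilon\Lambda + c\Bigl(\miint{G_{2\rho,\lambda}} H^\theta\Bigr)^{1/\theta}.
\]
By the stopping condition (third line of \eqref{eq: pq-phase condition}), $\Lambda=\miint{G_{\rho,\lambda}}H\,dz$, so choosing $\varepsilon=\tfrac14$ absorbs the first term into the left-hand side and proves the claim. The threshold $\theta_0$ is taken as the maximum of those required in Lemmas~\ref{Lemma 3.13}, \ref{Lemma 3.15} and of the admissibility condition in Lemma~\ref{lem : lemma 2.1}; for $s=\theta p$, $\sigma=r=2$, $\vartheta=\frac12$, the latter reads $\theta\ge N/(N+2)$-type.
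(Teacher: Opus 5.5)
Your proof is correct and follows the paper's argument: Lemma~\ref{lem: energy inequality} on $G_{2\rho,\lambda}(z_0)$, Lemma~\ref{Lemma 3.15} plus Young for the first term, Lemma~\ref{lem : lemma 2.1} with Lemma~\ref{Lemma 3.14} plus Young for the second, and absorption by the stopping condition. The only difference is that you treat $\Lambda/\lambda^2$ by a case split on $K^2\lambda^p\gtrless a(z_0)\lambda^q$ instead of the additive split $\mathrm{J}_1+\mathrm{J}_2$, and your case (ii), with $s=\theta q$, \eqref{Eq: 3.11} and $a\simeq a(z_0)$, spells out what the paper's ``similarly'' for $\mathrm{J}_2$ must mean (the $s=\theta p$ route used for $\mathrm{J}_1$ leaves $a(z_0)\lambda^{q-1}(\cdot)^{1/(\theta p)}$, which Young's inequality cannot close when $a(z_0)\lambda^{q-p}$ is large).

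You are right that \eqref{eq: pq-phase condition} gives $a\simeq a(z_0)$ only on $G_{\rho,\lambda}(z_0)$; the paper tacitly uses it on larger cylinders already in Lemmas~\ref{Lemma 3.13}--\ref{Lemma 3.15}. However, the lemma preceding Theorem~\ref{holder cts thm} cannot supply it, since it assumes $q\le p+\kappa$, which \eqref{eq:new_upper_bound} does not imply. The comparability must instead come from the stopping-time construction of Section~\ref{subsec: stopping time}, where the $(p,q)$-case occurs only when $\inf_{Q_{\rho_{p;\mathfrak z},\lambda_{\mathfrak z}}(\mathfrak z)}a\ge\widetilde M\rho_{p;\mathfrak z}^{\kappa}$, combined with \eqref{eq:Z_kappa}.
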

\begin{proof}
    From the energy estimate, Lemma \ref{lem: energy inequality}, we have
\begin{align*}
&\miint{G_{\rho, \lambda}(z_0)}H(z,|Du|)\;dz\\
& \leq c \miint{G_{2\rho, \lambda}(z_0)}  H\left(z,\frac{|u-(u)_{G_{2\rho, \lambda}(z_0)}|}{2\rho}\right) dz+ c\frac{\lambda^p+a(z_0)\lambda^q}{\lambda^2}\miint{G_{2\rho, \lambda}(z_0)}\frac{|u-(u)_{G_{2\rho, \lambda}(z_0)}|^2}{(2\rho)^2}\;dz\\
&= \mathrm{I}+\mathrm{II}.
\end{align*}
\textbf{Estimate of $\mathrm{I}$:} From Lemma \ref{Lemma 3.15}, applying Young's inequality with $\left(\frac{1}{1-\theta}, \frac{1}{\theta}\right)$ yields
\begin{align*}
\miint{G_{2\rho, \lambda}(z_0)}  H\left(z,\frac{|u-(u)_{z_0, 2\rho, \lambda}|}{2\rho}\right) dz&\leq c\left(\lambda^p+a(z_0)\lambda^q\right)^{1-\theta}\left(\miint{G_{2 \rho,\lambda}\left(z_0\right)} H(z, |Du|)^{\theta}\, dz\right)\\
&\leq \varepsilon \left(\lambda^p+a(z_0)\lambda^q\right)+ c(\varepsilon) \left(\miint{G_{2 \rho,\lambda}\left(z_0\right)} H(z, |Du|)^{\theta}\, dz\right)^{\frac{1}{\theta}}.
\end{align*}
\textbf{Estimate of $\mathrm{II}$:} We estimate $\mathrm{II}$ considering it in two separate parts. We proceed as follows:
\begin{align*}
\lambda^{p-2}\miint{G_{2\rho, \lambda}(z_0)}\frac{|u-(u)_{z_0, 2\rho, \lambda}|^2}{(2\rho)^2}\;dz +a(z_0)\lambda^{q-2}\miint{G_{2\rho, \lambda}(z_0)}\frac{|u-(u)_{z_0, 2\rho, \lambda}|^2}{(2\rho)^2}\;dz=\mathrm{J_1}+\mathrm{J_2}.
\end{align*}
First, we estimate $\mathrm{J}_1$ using Lemma \ref{lem : lemma 2.1} with $\sigma =2, s=\theta p, \vartheta=\frac{1}{2}, r=2.$ Indeed, 
\begin{align*}
 \mathrm{J}_1=&\lambda^{p-2}\miint{G_{2\rho, \lambda}(z_0)}\frac{|u-(u)_{G_{2\rho, \lambda}(z_0)}|^2}{(2\rho)^2}\;dz\\
 &\overset{\text{Lemma}\,\ref{lem : lemma 2.1}}{\leq} c \lambda^{p-2} \left(\miint{G_{2\rho, \lambda}(z_0)}\frac{|u-(u)_{z_0, 2\rho, \lambda}|^{\theta p}}{(2\rho)^{\theta p}}+|Du|^{\theta p}\,dz\right)^{\frac{1}{\theta p}}\left(S(u)_{G_{2\rho, \lambda}(z_0)}\right)^{\frac{1}{2}}\\
 &\overset{\text{Lemma} \,\ref{Lemma 3.14}}{\leq} c \lambda^{p-1}\left(\miint{G_{2\rho, \lambda}(z_0)}H(z, |Du|)^{\theta}\,dz\right)^{\frac{1}{\theta p}}.
\end{align*}
Now applying Young's inequality with $(\frac{p}{p-1}, p),$ we obtain
\begin{align*}
    \mathrm{J}_1 &\leq \varepsilon \lambda^{p}+ c(\varepsilon)\left(\miint{G_{2\rho, \lambda}(z_0)}H(z, |Du|)^{\theta}\,dz\right)^{\frac{1}{\theta }}\leq \varepsilon \lambda^p+ c(\varepsilon)\left(\miint{G_{2 \rho,\lambda}\left(z_0\right)} H(z, |Du|)^{\theta}\, dz\right)^{\frac{1}{\theta}}.
\end{align*}
Similarly, we can estimate $\mathrm{J}_2$ as follows:
\begin{align*}
    \mathrm{J}_2 \leq \varepsilon a(z_0)\lambda^q + c(\varepsilon)\left(\miint{G_{2 \rho,\lambda}\left(z_0\right)} H(z, |Du|)^{\theta}\, dz\right)^{\frac{1}{\theta}}.
\end{align*}
Thus, combining the above estimates gives
\begin{align*}
 &\frac{\lambda^p+a(z_0)\lambda^q}{\lambda^2}\miint{G_{2\rho, \lambda}(z_0)}\frac{|u-(u)_{z_0, 2\rho, \lambda}|^2}{(2\rho)^2}\;dz \leq \varepsilon (\lambda^p+a(z_0)\lambda^q)+c\left(\miint{G_{2 \rho,\lambda}\left(z_0\right)} H(z, |Du|)^{\theta}\, dz\right)^{\frac{1}{\theta}}. 
\end{align*}
Finally, using the fourth condition of \eqref{eq: pq-phase condition} and absorbing $\varepsilon (\lambda^p+a(z_0)\lambda^q)$ on the left-hand side, we complete the proof of this lemma.
\end{proof}
\section{Proof of Theorem \ref{main theorem}} In this section, we complete the proof of Theorem \ref{main theorem}. We begin with the stopping time argument.
\subsection{Stopping time argument}\label{subsec: stopping time}
We prove the existence of $p$- and $(p, q)$-intrinsic cases in \eqref{eq: p-phase condition} and \eqref{eq: pq-phase condition} using the stopping time argument. We set
\begin{align}\label{defn: la_0}
    \lambda^2_0:=\frac{\left(2+||a||_{\infty}\right)}{r^{q-2\gamma}}+1
\end{align}
and 
\begin{align}\label{defn: La_0}
    \Lambda_0=\lambda_0^p+\sup_{Q_{2r}(z_0)}a(z)\lambda_0^q+1.
\end{align}

We define the superlevel sets
\begin{align}\label{Defn E_lambda}
E(\Lambda)=\left\{z \in \Omega_T:\, H\left(z, |Du(z)|\right)>\Lambda\right\}.
\end{align}
For $E(\Lambda),$ and $\rho\in [r,2r]$, we write
\begin{align}\label{defn_E(Lambda, rho)}
E(\Lambda,\rho):=E(\Lambda)\cap Q_{\rho}(z_0)=\left\{z \in Q_{\rho}(z_0):\, H(z, |Du(z)|)>\Lambda)\right\}.
\end{align}
Now, we discuss the stopping time argument. Let $r\leq r_1< r_2\leq 2r.$ For every $\mathfrak{z} \in E(\Lambda,r_1)$ and $K>1$ given by \eqref{defn of K},  we first show the existence of stopping time radius $\rho_{p; \mathfrak z}>0.$ Let
\begin{equation}    \label{eq: definition of Lambda}
    \Lambda >(c_Ec_{\gamma})^{\frac{q}{2}} \left(\frac{4c_v r}{r_2-r_1}\right)^\frac{q(N+2)}{2}\Lambda_0.
\end{equation}
For every $\mathfrak{z} \in E(\Lambda,r_1)$, we take $\lambda_\mathfrak{z}>0$ such that 
\begin{equation}    \label{eq: definition of lambda z}
\Lambda=\lambda_\mathfrak{z}^p+a(\mathfrak{z})\lambda_\mathfrak{z}^q.
\end{equation}
We claim that 
\begin{equation} \label{eq: lambda bigger than 1}
    \lambda_\mathfrak{z}>\sqrt{c_Ec_{\gamma}}\left(\frac{4c_v r}{r_2-r_1}\right)^\frac{N+2}{2}\lambda_0,
\end{equation}
where the constant $c_{\gamma}$ is given by
\begin{align}\label{defn of cgamma}
 c_{\gamma}:=\max\left\{[u]^p_{\gamma, \gamma/q}, [u]^q_{\gamma, \gamma/q}, [u]^2_{\gamma, \gamma/q}\right\}.   
\end{align}
Suppose \eqref{eq: lambda bigger than 1} is not true. Then 
\begin{align*}    
\Lambda=\lambda_\mathfrak{z}^p+a(\mathfrak{z})\lambda_\mathfrak{z}^q
&\leq (c_Ec_{\gamma})^{\frac{q}{2}}\left(\frac{4c_v r}{r_2-r_1}\right)^\frac{q(N+2)}{2}(\lambda_0^p+a(\mathfrak{z})\lambda_0^q)\\
&\leq (c_Ec_{\gamma})^{\frac{q}{2}}\left(\frac{4c_v r}{r_2-r_1}\right)^\frac{q(N+2)}{2}\Lambda_0,
\end{align*}
which contradicts \eqref{eq: definition of Lambda}. Thus, for $\tau\in[(r_2-r_1)/(2c_v),r_2-r_1)$, we have 
\begin{align}\label{eq : basic inequality 1 in section 5}
    \miint{Q_{\tau,\lambda_\mathfrak{z}}(\mathfrak{z})} H(z,|Du|)\; dz &\leq \lambda_\mathfrak{z}^{p-2}\left(\frac{2r}{\tau}\right)^{N+2}\miint{Q_{2r}(z_0)} H(z,|Du|)\; dz.
\end{align}
Now we estimate the right hand side of \eqref{eq : basic inequality 1 in section 5} using the energy estimate Lemma \ref{lem: energy inequality}. For $Q_{3r}(z_0) \subset \Omega_T,$ we have
\begin{align*}
  \miint{Q_{2r}(z_0)}H(z, |Du|)\, dz &\leq c_E \miint{Q_{3r}(z_0)} H\left(z, \frac{|u-(u)_{Q_{3r}(z_0)}|}{r}\right)\, dz+c_E \miint{Q_{3r}(z_0)}\frac{|u-(u)_{Q_{3r}(z_0)}|^2}{r^2}\, dz\\
  &=\mathrm{I}+\mathrm{II}.
\end{align*}
\textbf{Estimate of $\mathrm{I}:$} To estimate $\mathrm{I},$ we use the H\"{o}lder continuity of $u.$
\begin{align*}
\mathrm{I}&=\miint{Q_{3r}(z_0)}\left|\frac{u-(u)_{Q_{3r}(z_0)}}{r}\right|^p+a(z)\left|\frac{u-(u)_{Q_{3r}()z_0}}{r}\right|^q\, dz\\
&\leq \frac{\left(\displaystyle{\osc_{Q_{3r}(z_0)} u}\right)^p}{r^p}+||a||_{\infty} \frac{\left(\displaystyle{\osc_{Q_{3r}(z_0)} u}\right)^q}{r^q}\\
&\le \frac{3^p[u]^p_{\gamma, \gamma/q}}{r^{\frac{p(q-2\gamma)}{q}}}+||a||_{\infty}\frac{3^q[u]^q_{\gamma, \gamma/q}}{r^{q-2\gamma}}\leq 3^q\max\left\{[u]^p_{\gamma, \gamma/q}, [u]^q_{\gamma, \gamma/q}\right\} (1+||a||_{\infty})\frac{1}{r^{q-2\gamma}}.
\end{align*}
\textbf{Estimate of $\mathrm{II}:$} Similarly, we estimate
\begin{align*}
\mathrm{II}=\miint{Q_{3r}(z_0)}\frac{|u-(u)_{Q_{3r}(z_0)}|^2}{r^2}\, dz \leq \frac{3^2[u]^2_{\gamma, \gamma/q}}{r^{\frac{2(q-2\gamma)}{q}}} \leq \frac{3^2[u]^2_{\gamma, \gamma/q}}{r^{q-2\gamma}}.     
\end{align*}
Thus, recalling the constant $c_{\gamma}$ from \eqref{defn of cgamma}, we get
\begin{align}\label{eq: estimate of H by lambda_0}
    \miint{Q_{2r}(z_0)}H(z, Du)\, dz\leq c_E c_{\gamma} (2+||a||_{\infty})\frac{1}{r^{q-2\gamma}}.
\end{align}
Therefore, using the above estimate in \eqref{eq : basic inequality 1 in section 5}, we obtain,
\begin{align*}
 \miint{Q_{\tau,\lambda_\mathfrak{z}}(\mathfrak{z})} H(z,|Du|)\; dz &\leq \lambda_\mathfrak{z}^{p-2}\left(\frac{2r}{\tau}\right)^{N+2}\miint{Q_{2r}(z_0)} H_(z,|Du|)\; dz\nonumber\\
 &\leq  \lambda_\mathfrak{z}^{p-2}\left(\frac{2r}{\tau}\right)^{N+2} c_Ec_{\gamma}(2+||a||_{\infty})\frac{1}{r^{q-2\gamma}}\nonumber\\
 &\leq c_Ec_{\gamma}\left(\frac{4c_vr}{r_2-r_1}\right)^{N+2}\lambda^{p-2}_{\mathfrak{z}}\lambda^2_0< \lambda^p_{\mathfrak{z}}.
\end{align*}
The facts $\mathfrak{z}\in E(\Lambda,r_1)$ and \eqref{eq: definition of lambda z} imply that $\mathfrak{z}\in E(\lambda_\mathfrak{z}^p,r_1)$. We define a continuous function $g: (0, r_2-r_1]\to \mathbb{R}$ as
\begin{align*}
  g (\tau)=  \miint{Q_{\tau,\lambda_\mathfrak{z}}(\mathfrak{z})} H(z,|Du|)\; dz.
\end{align*}
By the Lebesgue differentiation theorem, we have 
\begin{align*}
    \lim_{\tau \to 0}g(\tau)=H(\mathfrak z, |Du(\mathfrak z)|)>\lambda^p_{\mathfrak z}.
\end{align*}
Therefore, by the intermediate value property of continuous function, there exists $\rho_{p;\mathfrak{z}}\in (0,(r_2-r_1)/(2c_v))$ such that 
\begin{equation}    \label{eq: choice condition 1 of rho}
    \miint{Q_{\rho_{p;\mathfrak{z}},\lambda_\mathfrak{z}}(\mathfrak{z})} H(z,|Du|)\; dz=\lambda_\mathfrak{z}^p
\end{equation}
and
\begin{equation}    \label{eq: choice condition 2 of rho}
    \miint{Q_{\tau,\lambda_\mathfrak{z}}(\mathfrak{z})} H(z,|Du|)\; dz< \lambda_\mathfrak{z}^p
\end{equation}
for every $\tau\in(\rho_{p;\mathfrak{z}},r_2-r_1)$. If $\inf_{Q_{\rho_{p; \mathfrak z}, \lambda_{\mathfrak z}}(\mathfrak z)}a(z)\leq\widetilde M\rho^{\kappa}_{p; \mathfrak z},$ together with \eqref{eq: choice condition 2 of rho} and \eqref{eq: choice condition 1 of rho}, we get the second and third conditions of \eqref{eq: p-phase condition}. 

We note that if $\inf_{Q_{\rho_{p; \mathfrak z}, \lambda_{\mathfrak z}}(\mathfrak z)}a( z)\leq \widetilde M\rho^{\kappa}_{p; \mathfrak z},$ by Lemma \ref{lem: lambda rho relation}, $\lambda\leq K \rho^{\frac{2\gamma -q}{q}}.$ Now using this relation, for every $\mathfrak z \in E(\Lambda, r_1,),$ we have $\lambda_{\mathfrak z}$ that satisfies $K^2\lambda^p_{\mathfrak z}\geq \inf_{Q_{\rho_{p; \mathfrak z}, \lambda_{\mathfrak z}}(\mathfrak z)}a( z)\lambda^q_{\mathfrak z}.$ Indeed, we have
\begin{align*}
 \inf_{Q_{\rho_{p; \mathfrak z}, \lambda_{\mathfrak z}}(\mathfrak z)}a( z)\leq \widetilde M\rho^{\kappa}_{p; \mathfrak z}\leq K\widetilde M \lambda^{-\frac{q\kappa}{q-2\gamma}}_{\mathfrak z} \leq K^2 \lambda^{p-q}_{\mathfrak z}.
\end{align*}
This completes the existence of the $p$-intrinsic assumption in \eqref{eq: p-phase condition}.

If $\rho_{p; \mathfrak z}$ satisfies $\inf_{Q_{\rho_{p; \mathfrak z}, \lambda_{\mathfrak z}}(\mathfrak z)}a(z)\geq \widetilde M\rho^{\kappa}_{p; \mathfrak z}$, we note that for any $z \in G_{\rho_{p,q; \mathfrak z}, \lambda_{\mathfrak z}}(\mathfrak z),$ we have 
\begin{align*}
    a(z)\leq c_a \left(\inf_{G_{\rho_{p,q; \mathfrak z}}, \lambda_{\mathfrak z}(\mathfrak z)}a(z)+\rho^{\kappa}_{p,q;\mathfrak z}\right)\leq c_a(a(\mathfrak z)+ \widetilde M^{-1}a(\mathfrak z))\leq 2c_a a(\mathfrak z),
\end{align*}
where we used $a\in \mathcal{Z}^{\kappa}(\Omega_T)$ and $$a(\mathfrak z)\geq \inf_{G_{\rho_{p,q; \mathfrak z}, \lambda_{\mathfrak z}}(\mathfrak z)}a( z)\geq \inf_{Q_{\rho_{p; \mathfrak z}, \lambda_{\mathfrak z}}(\mathfrak z)}a(z)\geq  \widetilde M\rho^{\kappa}_{p; \mathfrak z}\geq \widetilde M \rho^{\kappa}_{p,q; \mathfrak z}.$$ Similarly, we can also show $a(\mathfrak z)\leq 2c_a a(z)$. This proves the first assumption of \eqref{eq: pq-phase condition}.

Next, we show the second and third assumption of \eqref{eq: pq-phase condition}. For every $\tau \in (\rho_{p; \mathfrak z}, r_2-r_1),$ using \eqref{eq: choice condition 2 of rho}, we get 
\begin{align*}\label{eq : basic inequality 1 in section 5}
    \miint{G_{\tau,\lambda_\mathfrak{z}}(\mathfrak{z})} H(z,|Du|)\; dz \leq \frac{\lambda^p_{\mathfrak z}+a(\mathfrak z)\lambda^q_{\mathfrak z}}{\lambda^p_{\mathfrak z}}\miint{Q_{\tau, \lambda_{\mathfrak z}}(\mathfrak z)} H(z, |Du|)\, dz< \lambda^p_{\mathfrak z}+(\mathfrak z)\lambda^q_{\mathfrak z}.
\end{align*}
Since $\mathfrak{z}\in E(\Lambda,r_1),$ by Lebesgue differentiation theorem and intermediate value theorem,  there exists $\rho_{p, q; \mathfrak z}\in (0, \rho_{p; \mathfrak z}]$ such that 
\begin{equation*}    \label{eq: choice condition 1 of pq phase}
    \miint{G_{\rho_{p,q;\mathfrak{z}},\lambda_\mathfrak{z}}(\mathfrak{z})} H(z,|Du|)\; dz=\lambda^p_{\mathfrak z}+a(\mathfrak{z})\lambda_\mathfrak{z}^q
\end{equation*}
and
\begin{equation*}    \label{eq: choice condition 2 of pq phase}
    \miint{G_{\tau,\lambda_\mathfrak{z}}(\mathfrak{z})} H(z,|Du|)\; dz< \lambda^p_{\mathfrak z}+a(\mathfrak{z})\lambda_\mathfrak{z}^q
\end{equation*}
for every $\tau\in(\rho_{p,q; \mathfrak{z}},r_2-r_1).$ This completes the proof of $(p, q)$-intrinsic case \eqref{eq: pq-phase condition}. 

We note that, when $\inf_{Q_{\rho_{p; \mathfrak z}, \lambda_{\mathfrak z}}(\mathfrak z)}a(z)\geq \widetilde M\rho^{\kappa}_{p, \mathfrak{z}},$ $\lambda_{\mathfrak z}$ satisfies either $K^2\lambda_{\mathfrak z}^{p}\le a(\mathfrak z)\lambda_{\mathfrak z}^{q}$ or $K^2\lambda_{\mathfrak z}^{p}\ge a(\mathfrak z)\lambda_{\mathfrak z}^{q}.$  However, we show that $K^2\lambda_{\mathfrak{z}}^p \leq a(\mathfrak{z})\lambda_{\mathfrak{z}}^q,\,\, \text{and}\,\,\,  \inf_{Q_{\rho_{p; \mathfrak z}, \lambda_{\mathfrak z}}(\mathfrak z)}a(z)\leq \widetilde M\rho^{\kappa}_{p, \mathfrak{z}},$  cannot occur together. Indeed, this gives
\begin{align}\label{eq: the case that cannot occur}
    K^2\lambda^p_{\mathfrak{z}} \leq a(\mathfrak{z})\lambda^q_{\mathfrak{z}} \leq c_a\left(\inf_{Q_{\rho_{p; \mathfrak z}, \lambda_{\mathfrak z}}(\mathfrak z)}a(z)+\rho^{\kappa}_{p, \mathfrak z}\right)\lambda^q_{\mathfrak z}\leq 2c_a \widetilde M\rho^{\kappa}_{p; \mathfrak{z}}\lambda^q_{\mathfrak{z}}.
\end{align}
From the definition of $K$ given in \eqref{defn of K}, we have
\begin{align}\label{eq: from defn of K}
    \frac{K}{2}> \max\left\{c_E[u]^p_{\gamma, \gamma/q}, c_E[u]^2_{\gamma, \gamma/ q}, 2c_a\widetilde Mc_E[u]^q_{\gamma, \gamma/ q}\right\}^{\frac{q\kappa}{p(q-2\gamma)}}.
\end{align}
Using the inequality \eqref{eq: from defn of K} in Lemma \ref{lem: lambda rho relation}, we get
\begin{align}\label{eq: new lambda rho relation}
    \lambda_{\mathfrak{z}}< \left(\frac{K}{2}\right)^{\frac{q-2\gamma}{q\kappa}}\rho^{\frac{2\gamma-q}{q}}_{p, \mathfrak{z}}\implies \rho^{\kappa}_{p, \mathfrak{z}}< \left(\frac{K}{2}\right)\lambda^{-\frac{q\kappa}{q-2\gamma}}_{\mathfrak z}.
\end{align}
Now using \eqref{eq: new lambda rho relation} in \eqref{eq: the case that cannot occur} and the gap relation $q\leq p+\frac{q\kappa}{q-2\gamma},$ we derive the following.
\begin{align*}
    K^2\leq 2c_a \widetilde M\rho^{\kappa}_{p, \mathfrak{z}}\lambda^{q-p}_{\mathfrak{z}}< 2c_a\widetilde M\left(\frac{K}{2}\right)\lambda^{q-p-\frac{q\kappa}{q-2\gamma}}_{\mathfrak{z}}\leq c_a\widetilde MK.
\end{align*}
This implies $K<c_a \widetilde M,$ which is a contradiction to the definition of $K$ (see \eqref{defn of K}).

\subsection{Vitali covering lemma} To complete the proof of Theorem \ref{main theorem}, here we prove a Vitali covering lemma for the following parabolic intrinsic cylinders: For each $\mathfrak{z}\in E(\Lambda,r_1)$, we denote the intrinsic cylinder of $\mathfrak{z}$ by
\begin{align*}
\displaystyle
\mathcal{Q}(\mathfrak{z})=\left\{\begin{array}{l}
Q_{2\rho_{p;\mathfrak{z}},\lambda_\mathfrak{z}}(\mathfrak{z}) \quad\,\text{in $p$-intrinsic case},\\
G_{2\rho_{p,q; \mathfrak{z}},\lambda_\mathfrak{z}}(\mathfrak{z}) \quad\,\text{in $(p, q)$-intrinsic case}.
\end{array}\right.
\end{align*}
Let us define the family of cylinders and the radii as
\begin{align*}
\mathcal{F}=\left\{\mathcal{Q}(\mathfrak{z})\, : \,\mathfrak{z}\in E(\Lambda,r_1)\right\}\quad \text{and} \quad \ell(\mathcal{Q}(\mathfrak{z}))=\left\{
    \begin{array}{l }
        2\rho_{p;\mathfrak{z}} \quad\,\,\,\,\,\, \text{in $p$-intrinsic case},\\
        2\rho_{p,q;\mathfrak{z}} \quad \text{in $(p,q)$-intrinsic case}.
    \end{array}
\right.
\end{align*} 
Moreover, for $\mathfrak{z}, \mathfrak{w}\in E(\Lambda, r_1),$ we set
\begin{align*}
    \Lambda=\lambda^p_{i}+a(i)\lambda^q_{i}, \quad \text{where} \ \, i\in \left\{\mathfrak{z}, \mathfrak{w}\right\}.
\end{align*}
We note from the previous section that $\ell(\mathcal{Q}(\mathfrak{z}))\in (0, R)$ by setting $R:=\frac{r_2-r_1}{c_v}$, where $c_v$ will be chosen later. For $j \in  \NN,$ we consider the following subcollection of $\mathcal{F},$
\begin{align*}
  \mathcal{F}_j:=\left\{\mathcal{Q}(\mathfrak{z})\in \mathcal{F} : \frac{R}{2^j}\leq \ell({\mathcal{Q}(\mathfrak{z})})<\frac{R}{2^{j-1}}\right\}.
\end{align*}
We construct disjoint subcollections $\mathcal{I}_j \subset \mathcal{F}_j$ for $j \in \NN$ as follows. Let $\mathcal{I}_1$ be the maximal disjoint  subcollection in $\mathcal{F}_1.$ We note from the following
\begin{align*}
    \lim_{\La\to \infty}\La|E(\Lambda)|\leq \lim_{\La \to \infty}\iint_{E(\Lambda)}H(z, |D u|)\, dz=0
\end{align*}
that $|E(\Lambda)|<\infty.$ 
Moreover, in the same way as in \eqref{eq : basic inequality 1 in section 5} and \eqref{eq: choice condition 1 of rho}, we have 
\begin{equation}    \label{eq : relation between lambda_z and lambda_0}
    \lambda_\mathfrak{z}\leq \left(\frac{2r}{\rho_\mathfrak{z}}\right)^{\frac{N+2}{2}}\lambda_0.
\end{equation}
Using $|E(\La, r_1)|\leq |E(\La)|< \infty$ and \eqref{eq : relation between lambda_z and lambda_0}, we conclude that the number of cylinders in $\mathcal{I}_1$ is finite. Suppose that we have already chosen $\mathcal{I}_j \subset \mathcal{F}_j$ for $j=1,..,k-1.$ Then we construct $\mathcal{I}_k$ as
\begin{align*}
    \mathcal{I}_k=\left\{\mathcal{Q}(\mathfrak{z})\in \mcf_k \, : \, \mathcal{Q}(\mathfrak{z})\cap \mathcal{Q}(\mathfrak{w})=\emptyset\,\,\, \text{for every}\,\,\, \mathcal{Q}(\mathfrak{w}) \in \bigcup_{j=1}^{k-1}\mathcal{I}_j \right\}. 
\end{align*}
Therefore, 
$$
\mathcal{I}=\bigcup_{j=1}^{\infty}\mathcal{I}_j
$$
would be the maximal collection of pairwise disjoint cylinders in $\mcf.$ Now we state the precise theorem which we prove in this section.

In what follows, we set $\Gamma$ depending on $c_a, p, q, \kappa, K$ as 
\begin{align*}
   \Gamma:= 4^{\frac{1}{p}}+\left(c_a6^{\kappa+1}\max\{2^{\kappa}, 2\}K\right)^{\frac{1}{q}}.
\end{align*}
\begin{lemma}[Vitali covering lemma]\label{vitali lemma}
 Let $E(\Lambda, r_1)$ be given by \eqref{defn_E(Lambda, rho)} and $$c_v:= 16\Gamma \sqrt{(\Gamma^p+c_aK^2 \Gamma^q)+1}.$$ Then there exists a collection $\{\mathcal{Q}({z_i})\}_{i\in \NN}$ of cylinders that satisfies the following.
\begin{itemize}
\item [(i)] $\cup_{i\in\NN}\,\,c_v \mathcal{Q}({z_i})=E(\La, r_1)$.
\item[(ii)] $\mathcal{Q}({z_i})\cap \mathcal{Q}({z_j})= \emptyset$ for every $i,j\in \NN$ with $i\ne j$.
\end{itemize}   
\end{lemma}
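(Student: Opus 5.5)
Statement (i) should read $E(\Lambda,r_1)\subset\bigcup_i c_v\mathcal{Q}(z_i)$. I will prove it by showing that every $\mathcal{Q}(\mathfrak{z})\in\mathcal{F}$ meets some chosen cylinder $\mathcal{Q}(\mathfrak{w})\in\mathcal{I}$ with $\ell(\mathcal{Q}(\mathfrak{w}))\ge \tfrac12\ell(\mathcal{Q}(\mathfrak{z}))$, and that in this case $\mathcal{Q}(\mathfrak{z})\subset c_v\mathcal{Q}(\mathfrak{w})$. Part (ii) holds by construction of $\mathcal{I}=\bigcup_j\mathcal{I}_j$. Countability of $\mathcal{I}$ also holds, since each $\mathcal{I}_j$ is finite: the cylinders are disjoint, their radii are bounded below, \eqref{eq : relation between lambda_z and lambda_0} bounds their time lengths from below, and $|E(\Lambda)|<\infty$. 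The existence of the intersecting partner $\mathcal{Q}(\mathfrak{w})$ follows from maximality: if $\mathcal{Q}(\mathfrak{z})\in\mathcal{F}_k$ is not selected, it meets some member of $\mathcal{I}_1\cup\dots\cup\mathcal{I}_k$, and every such member has radius at least $R/2^k>\tfrac12\ell(\mathcal{Q}(\mathfrak{z}))$.

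The inclusion comes down to comparing spatial radii, which is trivial since $\ell(\mathcal{Q}(\mathfrak{z}))\le 2\ell(\mathcal{Q}(\mathfrak{w}))$ and $c_v\ge 4$, and time radii, which requires comparing $\lambda_{\mathfrak z}$ with $\lambda_{\mathfrak w}$. Both points satisfy $\Lambda=\lambda_i^p+a(i)\lambda_i^q$, so the heart of the proof is to show $\lambda_{\mathfrak w}\le \Gamma\lambda_{\mathfrak z}$ together with a matching bound for the $(p,q)$-scaling $\lambda_i^2/\Lambda$. I split into four cases according to whether each of $\mathfrak z,\mathfrak w$ is $p$-intrinsic or $(p,q)$-intrinsic.

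\textbf{Key estimate (comparing $a(\mathfrak z)$ and $a(\mathfrak w)$).} Since the cylinders intersect, $d_p(\mathfrak z,\mathfrak w)\lesssim\ell(\mathcal{Q}(\mathfrak w))$, using $\lambda\ge1$, $p\ge2$ and $\Lambda\ge\lambda^2$ so that all cylinders lie in standard parabolic cylinders of the same radius. By \eqref{eq:Z_kappa},
\[
a(\mathfrak z)\le c_a\bigl(a(\mathfrak w)+6^{\kappa}\rho_{\mathfrak w}^{\kappa}\bigr).
\]
I then treat the $p$-phase and the $(p,q)$-phase separately.

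\emph{$\mathfrak w$ in the $p$-phase.} Here $\inf a\le \widetilde M\rho_{\mathfrak w}^\kappa$. Lemma \ref{lem: lambda rho relation} together with the gap bound gives $\rho_{\mathfrak w}^\kappa\le \tfrac K2\lambda_{\mathfrak w}^{p-q}$, exactly as in \eqref{eq: lambda_rho_relation_2}. Consequently $a(\mathfrak w)\lambda_{\mathfrak w}^q\lesssim K^2\lambda_{\mathfrak w}^p$ and $\Lambda\le cK^2\lambda_{\mathfrak w}^p$.

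\emph{$\mathfrak w$ in the $(p,q)$-phase.} The first condition in \eqref{eq: pq-phase condition} gives comparability of $a$ on $G$, and I combine it with $\widetilde M\rho^\kappa\le a(\mathfrak w)$.

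\textbf{Comparing the scales.} If $\lambda_{\mathfrak w}>\Gamma\lambda_{\mathfrak z}$, then
\[
\Lambda=\lambda_{\mathfrak z}^p+a(\mathfrak z)\lambda_{\mathfrak z}^q\le \Gamma^{-p}\lambda_{\mathfrak w}^p+c_a\bigl(a(\mathfrak w)+6^\kappa\rho_{\mathfrak w}^\kappa\bigr)\Gamma^{-q}\lambda_{\mathfrak w}^q.
\]
Bounding $\rho_{\mathfrak w}^\kappa\lambda_{\mathfrak w}^{q}$ by $K\lambda_{\mathfrak w}^p$ (or by $a(\mathfrak w)\lambda_{\mathfrak w}^q$ in the $(p,q)$-case), the right-hand side is at most $\tfrac12\Lambda$ by the choice of $\Gamma$, which is a contradiction. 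The reverse-type bounds for the time lengths then follow. In the $p$-case the time length is $\lambda_{\mathfrak z}^{2-p}\ell^2$, which requires $\lambda_{\mathfrak z}\le\Gamma\lambda_{\mathfrak w}$ in the opposite direction, and this is symmetric. For $G$-cylinders I use $\lambda^2/\Lambda$ with the common value $\Lambda$, so only $\lambda_{\mathfrak z}^2\le\Gamma^2\lambda_{\mathfrak w}^2$ is needed. The mixed cases, $p$ versus $(p,q)$, use $\Lambda\le(\Gamma^p+c_aK^2\Gamma^q)\lambda^p$ to compare $\lambda^{2-p}$ with $\lambda^2/\Lambda$. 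This produces the factor $\sqrt{\Gamma^p+c_aK^2\Gamma^q+1}$ in $c_v$.

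\textbf{Conclusion and main obstacle.} With these comparisons, any point of $\mathcal{Q}(\mathfrak z)$ lies within spatial distance $\ell(\mathcal{Q}(\mathfrak z))+\ell(\mathcal{Q}(\mathfrak w))\le 3\ell(\mathcal{Q}(\mathfrak w))$ of $\mathfrak w$, and within time distance at most $c_v^2$ times the time radius of $\mathcal{Q}(\mathfrak w)$. Hence $\mathcal{Q}(\mathfrak z)\subset c_v\mathcal{Q}(\mathfrak w)$. Since every $\mathfrak z\in E(\Lambda,r_1)$ is the center of its own $\mathcal{Q}(\mathfrak z)$, statement (i) follows. I expect the main obstacle to be the mixed case where $\mathfrak z$ is $p$-intrinsic and $\mathfrak w$ is $(p,q)$-intrinsic. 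There the a priori comparison of $a(\mathfrak z)$ with $a(\mathfrak w)$ has to be routed through Lemma \ref{lem: lambda rho relation}, which makes the dependence on $K$ and on the gap bound delicate.
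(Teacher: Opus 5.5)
Your architecture is the paper's: the dyadic families $\mathcal F_j$, the selection of $\mathcal I$, a partner $\mathcal Q(\mathfrak w)$ with $\ell(\mathcal Q(\mathfrak z))\le 2\ell(\mathcal Q(\mathfrak w))$, the contradiction argument giving $\lambda_{\mathfrak w}\le\Gamma\lambda_{\mathfrak z}$, and the four time-inclusion cases. You are also right that (i) should read $E(\Lambda,r_1)\subset\bigcup_i c_v\mathcal Q(z_i)$.

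\textbf{The gap.} The problem is the sentence ``this is symmetric''. The bound $\lambda_{\mathfrak z}\le\Gamma\lambda_{\mathfrak w}$ does not follow by exchanging $\mathfrak z$ and $\mathfrak w$, because the radius hypothesis is one-sided. You only know $d_p(\mathfrak z,\mathfrak w)<6\rho_{\mathfrak w}$, and $\rho_{\mathfrak w}$ may be far larger than $\rho_{\mathfrak z}$. Suppose $\lambda_{\mathfrak z}>\Gamma\lambda_{\mathfrak w}$, and expand $\Lambda=\lambda_{\mathfrak w}^p+a(\mathfrak w)\lambda_{\mathfrak w}^q$ using $a(\mathfrak w)\le c_a(a(\mathfrak z)+6^\kappa\rho_{\mathfrak w}^\kappa)$. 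Then the error $c_a6^\kappa\rho_{\mathfrak w}^\kappa\lambda_{\mathfrak w}^q$ carries no small factor:
\begin{itemize}
\item Lemma~\ref{lem: lambda rho relation} applied at $\mathfrak z$ controls only $\rho_{\mathfrak z}^\kappa$, not $\rho_{\mathfrak w}^\kappa$.
\item If $\mathfrak w$ is $p$-intrinsic you can still argue directly, via $\lambda_{\mathfrak z}^p\le\Lambda\lesssim K^2\lambda_{\mathfrak w}^p$, though with a constant other than $\Gamma$.
\item If $\mathfrak w$ is $(p,q)$-intrinsic you only get $\rho_{\mathfrak w}^\kappa\lambda_{\mathfrak w}^q\le\widetilde M^{-1}a(\mathfrak w)\lambda_{\mathfrak w}^q\le\widetilde M^{-1}\Lambda$. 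For an arbitrary $\widetilde M\ge1$ this is of the size of $\Lambda$, so no contradiction results.
\end{itemize}
The paper's ``Similarly, we can also show $\lambda_{\mathfrak z}\le\Gamma\lambda_{\mathfrak w}$'' has exactly the same defect. For the same reason, your plan to route the mixed case through Lemma~\ref{lem: lambda rho relation} cannot work.

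\textbf{Which cases need the reverse bound.} You have the direction backwards for $Q$-cylinders. Since $p\ge2$, the inequality $\lambda_{\mathfrak z}^{2-p}\le\Gamma^{p-2}\lambda_{\mathfrak w}^{2-p}$ is equivalent to $\lambda_{\mathfrak w}\le\Gamma\lambda_{\mathfrak z}$, which you already proved. When $\mathcal Q(\mathfrak z)$ is a $G$-cylinder and $\mathcal Q(\mathfrak w)$ a $Q$-cylinder, $\lambda_{\mathfrak z}^2/\Lambda\le\lambda_{\mathfrak z}^{2-p}$ reduces to that same direction. The reverse bound is needed exactly when $\mathfrak w$ is $(p,q)$-intrinsic. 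In the $G$--$G$ case it is needed directly. In the $Q$--$G$ case, $\Lambda\le(1+c_aK^2)\lambda_{\mathfrak z}^p$ turns $\lambda_{\mathfrak z}^{2-p}\lesssim\lambda_{\mathfrak w}^2/\Lambda$ into $\lambda_{\mathfrak z}\lesssim\lambda_{\mathfrak w}$.

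\textbf{A repair.} Use the $(p,q)$-phase property at $\mathfrak w$ together with a large $\widetilde M$. First, $a(\mathfrak w)\ge\inf_{Q_{\rho_{p;\mathfrak w},\lambda_{\mathfrak w}}(\mathfrak w)}a\ge\widetilde M\rho_{p;\mathfrak w}^\kappa\ge\widetilde M\rho_{p,q;\mathfrak w}^\kappa$. Fix $\widetilde M\ge 2c_a6^\kappa$; it is a free parameter, chosen before $K$, $\Gamma$ and $c_v$. This gives $a(\mathfrak w)\le c_a a(\mathfrak z)+\tfrac12 a(\mathfrak w)$, that is, $a(\mathfrak w)\le 2c_a a(\mathfrak z)$. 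Now $\lambda_{\mathfrak z}>\Gamma\lambda_{\mathfrak w}$ forces $\Lambda<\Gamma^{-p}\lambda_{\mathfrak z}^p+2c_a\Gamma^{-q}a(\mathfrak z)\lambda_{\mathfrak z}^q\le\tfrac14\Lambda$, using $\Gamma^p\ge4$ and $\Gamma^q\ge12c_a$. This is a contradiction. With this step added and the directions sorted as above, your four-case inclusion closes with the stated $c_v$.
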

To prove Vitali's covering lemma, we need to show:
\begin{itemize}
\item[(i)] For any $\mathcal{Q}(\mathfrak{z}) \in \mcf,$ there exists $\mathcal{Q}(\mathfrak{w}) \in \mathcal{I}$ such that $\mathcal{Q}(\mathfrak{z}) \cap \mathcal{Q}(\mathfrak{w}) \neq \emptyset.$
\item[(ii)] There exists a universal constant $c_v>1$ such that $\mathcal{Q}(\mathfrak{z}) \subset c_v \mathcal{Q}(\mathfrak{w})$ where 
$$
c_v\mathcal{Q}(\mathfrak{w})=\left\{
    \begin{array}{l c}
        Q_{c_v(2\rho_{p;\mathfrak{w}}),\lambda_\mathfrak{w}}(\mathfrak{w})\,\,\,\,\,\, \quad \text{in $p$-intrinsic case},\\
        G_{c_v(2\rho_{p,q;\mathfrak{w}}),\lambda_\mathfrak{w}}(\mathfrak{w}) \quad \text{in $(p,q)$-intrinsic case}.
    \end{array}
    \right.
$$
\end{itemize}
To show the first assertion, fix some $Q(\mathfrak{z}) \in \mcf.$ Then $Q(\mathfrak{z}) \in \mcf_j$ for some $j \in \NN.$ Using the maximality of $\mathcal{I}_j,$ we find that there exists $\mathcal{Q}(\mathfrak{w})\in \cup_{i=1}^{j}\mathcal{I}_i$ such that $\mathcal{Q}(\mathfrak{z})\cap \mathcal{Q}(\mathfrak{w})\neq \emptyset.$ 

To show the second assertion, we first observe that for any $\mathcal{Q}(\mathfrak{w}) \in \mathcal{I}_i,$ with $i\leq j,$ we have
\begin{align}\label{radius compare}
    \ell(\mathcal{Q}(\mathfrak{z}))\leq 2\ell(\mathcal{Q}(\mathfrak{w})).
\end{align}
This is due to the fact that
$\frac{R}{2^j}\leq \frac{R}{2^i}<\ell(\mathcal Q(\mathfrak w)) \leq \frac{R}{2^{i-1}}$ for $i\leq j$, and $\frac{R}{2^j}<\ell(\mathcal Q(\mathfrak z)) \leq \frac{R}{2^{j-1}}$. 

In the rest of the subsection, we show the existence of the Vitali constant $c_v>1$ such that $\mathcal{Q}(\mathfrak{z}) \subset c_v \mathcal{Q}(\mathfrak{w})$ by considering the following four distinct cases.
\begin{itemize}
    \item [(i)] $\mathcal{Q}(\mathfrak z)=Q_{2\rho_{p; \mathfrak z}, \lambda_{\mathfrak z}}(\mathfrak z)$ and $\mathcal{Q}(\mathfrak w)=Q_{2\rho_{p; \mathfrak w}, \lambda_\mathfrak w}(\mathfrak w),$
    \item [(ii)] $\mathcal{Q}(\mathfrak z)=Q_{2\rho_{p; \mathfrak z}, \lambda_{\mathfrak z}}(\mathfrak z)$ and $\mathcal{Q}(\mathfrak w)=G_{2\rho_{p,q; \mathfrak w}, \lambda_\mathfrak w}(\mathfrak w),$
    \item [(iii)] $\mathcal{Q}(\mathfrak z)=G_{2\rho_{p,q; \mathfrak z}, \lambda_{\mathfrak z}}(\mathfrak z)$ and $\mathcal{Q}(\mathfrak w)=G_{2\rho_{p,q; \mathfrak w}, \lambda_\mathfrak w}(\mathfrak w),$
    \item [(iv)] $\mathcal{Q}(\mathfrak z)=G_{2\rho_{p,q; \mathfrak z}, \lambda_{\mathfrak z}}(\mathfrak z)$ and $\mathcal{Q}(\mathfrak w)=Q_{2\rho_{p; \mathfrak w}, \lambda_\mathfrak w}(\mathfrak w).$
\end{itemize}
Before we proceed to the proof of Lemma \ref{vitali lemma}, we show the comparability of the scaling factors $\lambda_{i}$, for $i \in \{\mathfrak z, \mathfrak w\}$.
\begin{lemma}
Let $\mathcal{Q}(\mathfrak z), \mathcal{Q}(\mathfrak w)\in \mathcal{F}$ with $\mathcal{Q}(\mathfrak z)\cap \mathcal{Q}(\mathfrak w)\neq \emptyset.$ Moreover, if $\ell(\mathcal{Q}(\mathfrak z))\leq 2 \ell(\mathcal{Q}(\mathfrak w)),$ then $\lambda_{\mathfrak z}$ and $\lambda_{\mathfrak w}$ are comparable, i.e., there exists a constant $\Gamma$ depending on $c_a, p, q, \kappa$ and $K$ such that
\begin{align*}
    \Gamma^{-1}\lambda_{\mathfrak z} \leq \lambda_{\mathfrak w}\leq \Gamma \lambda_{\mathfrak z}.
\end{align*}
\end{lemma}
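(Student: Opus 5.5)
By symmetry it suffices to show $\lambda_{\mathfrak w}\le \Gamma\lambda_{\mathfrak z}$; the reverse inequality then follows the same way. The argument starts from the identity
\[
\Lambda=\lambda_{\mathfrak z}^p+a(\mathfrak z)\lambda_{\mathfrak z}^q=\lambda_{\mathfrak w}^p+a(\mathfrak w)\lambda_{\mathfrak w}^q .
\]
Suppose for contradiction that $\lambda_{\mathfrak w}>\Gamma\lambda_{\mathfrak z}$. Since $\Gamma\ge 4^{1/p}>1$, the identity gives $\lambda_{\mathfrak w}^p<\Lambda$, which forces $a(\mathfrak w)<a(\mathfrak z)$. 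Rearranging,
\[
\lambda_{\mathfrak w}^p+a(\mathfrak w)\lambda_{\mathfrak w}^q\le \lambda_{\mathfrak z}^p+a(\mathfrak z)\lambda_{\mathfrak z}^q\le \Gamma^{-p}\lambda_{\mathfrak w}^p+a(\mathfrak z)\Gamma^{-q}\lambda_{\mathfrak w}^q .
\]
The whole task is therefore to bound $a(\mathfrak z)$ in terms of $a(\mathfrak w)$ plus an error that is small relative to $\lambda_{\mathfrak w}^{p-q}$. Concretely, I aim for
\[
a(\mathfrak z)\le c_a\, 6^{\kappa}\max\{2^\kappa,2\}\bigl(a(\mathfrak w)+K\lambda_{\mathfrak w}^{p-q}\bigr).
\]
Given this, the displayed inequality yields $\lambda_{\mathfrak w}^p(1-\Gamma^{-p}) \le \Gamma^{-q}c_a6^{\kappa}\max\{2^\kappa,2\}K\lambda_{\mathfrak w}^p + (\text{terms in } a(\mathfrak w)\text{ absorbed})$. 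With the choice of $\Gamma$, the factor $\Gamma^{-q}c_a6^{\kappa+1}\max\{2^\kappa,2\}K$ is at most $1/6$ and $1-\Gamma^{-p}\ge 3/4$, which is the desired contradiction.

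To prove the bound on $a(\mathfrak z)$, I use \eqref{eq:Z_kappa}:
\[
a(\mathfrak z)\le c_a\bigl(a(\mathfrak w)+d_p(\mathfrak z,\mathfrak w)^\kappa\bigr).
\]
Since the cylinders intersect and $\ell(\mathcal Q(\mathfrak z))\le 2\ell(\mathcal Q(\mathfrak w))$, the spatial distance is at most $\ell(\mathcal Q(\mathfrak z))+\ell(\mathcal Q(\mathfrak w))\le 3\ell(\mathcal Q(\mathfrak w))$. The time distance is at most the sum of the half-lengths of the two cylinders. Because $\lambda\ge1$ and $p\ge2$, we have $\lambda^{2-p}\le1$ and $\lambda^2/\Lambda\le \lambda^{2-p}\le 1$, so every intrinsic cylinder is contained in the standard parabolic cylinder of the same radius. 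Hence $d_p(\mathfrak z,\mathfrak w)\le 6\rho_{\mathfrak w}$ up to the factor $2$, where $\rho_{\mathfrak w}$ denotes the relevant stopping radius. This gives
\[
a(\mathfrak z)\le c_a\bigl(a(\mathfrak w)+6^\kappa \max\{2^\kappa,2\}\rho_{\mathfrak w}^\kappa\bigr).
\]

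It remains to show $\rho_{\mathfrak w}^\kappa\lesssim K\lambda_{\mathfrak w}^{p-q}$, or else that this term is dominated by $a(\mathfrak w)$. I split according to the type of $\mathcal Q(\mathfrak w)$.
\begin{itemize}
\item If $\mathcal Q(\mathfrak w)$ is $p$-intrinsic, Lemma~\ref{lem: lambda rho relation} together with \eqref{eq: new lambda rho relation} gives $\rho_{p;\mathfrak w}^\kappa\le \tfrac K2\lambda_{\mathfrak w}^{-q\kappa/(q-2\gamma)}\le \tfrac K2\lambda_{\mathfrak w}^{p-q}$. The last inequality uses the gap bound \eqref{eq:new_upper_bound} and $\lambda_{\mathfrak w}\ge1$.
\item If $\mathcal Q(\mathfrak w)$ is $(p,q)$-intrinsic, the stopping-time construction gives $\rho_{p,q;\mathfrak w}\le\rho_{p;\mathfrak w}$ and $\widetilde M\rho_{p;\mathfrak w}^\kappa\le \inf a\le a(\mathfrak w)$ with $\widetilde M\ge1$. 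Thus $\rho_{\mathfrak w}^\kappa\le a(\mathfrak w)$ and this term merges into $a(\mathfrak w)$.
\end{itemize}
In the first case I also need $a(\mathfrak w)$ itself under control. This is harmless, since the term is absorbed on the left because $\Gamma^{-q}c_a\cdots\le 1/6<1$.

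The main obstacle is the geometric step: verifying $d_p(\mathfrak z,\mathfrak w)\lesssim\rho_{\mathfrak w}$ uniformly across the four combinations of cylinder types. The time-lengths of $Q_{\rho,\lambda}$ and $G_{\rho,\lambda}$ carry different scalings, so bounding the time half-lengths by $\rho^2$ via $\lambda\ge1$ is the crude but safe route, and it is where the constants $6^{\kappa}$ and $\max\{2^\kappa,2\}$ enter. I would check carefully that the radius comparison \eqref{radius compare} is applied to the stopping radii, not to the doubled ones.
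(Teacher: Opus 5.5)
Your argument for $\lambda_{\mathfrak w}\le\Gamma\lambda_{\mathfrak z}$ is the paper's argument. You use the $\mathcal Z^\kappa$ bound $a(\mathfrak z)\le c_a\bigl(a(\mathfrak w)+d_p(\mathfrak z,\mathfrak w)^\kappa\bigr)$ with $d_p(\mathfrak z,\mathfrak w)\le \ell(\mathcal Q(\mathfrak z))+\ell(\mathcal Q(\mathfrak w))\le 6\rho_{\mathfrak w}$. You then convert $\rho_{p;\mathfrak w}^\kappa\lambda_{\mathfrak w}^q\le\tfrac K2\lambda_{\mathfrak w}^p$ via \eqref{eq: new lambda rho relation} and the gap bound, and reach the contradiction $\Lambda<\Lambda$. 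Your observation that only the type of $\mathcal Q(\mathfrak w)$ matters is correct. So is your use of $\widetilde M\rho_{p;\mathfrak w}^\kappa\le a(\mathfrak w)$ when $\mathcal Q(\mathfrak w)$ is $(p,q)$-intrinsic, which fills in a case the paper only calls ``similar''. There is one minor slip: the factor that is $\le 1/6$ is $\Gamma^{-q}c_a6^{\kappa}\max\{2^\kappa,2\}K$, not the version with $6^{\kappa+1}$.

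The genuine gap is the opening ``by symmetry''. The hypothesis $\ell(\mathcal Q(\mathfrak z))\le 2\ell(\mathcal Q(\mathfrak w))$ is not symmetric, so you cannot swap $\mathfrak z$ and $\mathfrak w$. For the reverse inequality the distance is still only controlled by $d_p(\mathfrak z,\mathfrak w)\le 6\rho_{\mathfrak w}$, and $\rho_{\mathfrak w}$ may be far larger than $\rho_{\mathfrak z}$. Assuming $\lambda_{\mathfrak z}>\Gamma\lambda_{\mathfrak w}$, you must now bound $a(\mathfrak w)\lambda_{\mathfrak w}^q\le c_a a(\mathfrak z)\lambda_{\mathfrak w}^q+c_a6^\kappa\rho_{\mathfrak w}^\kappa\lambda_{\mathfrak w}^q$ from above.

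\textbf{$\mathcal Q(\mathfrak w)$ $p$-intrinsic.} The error term is at most $c_a6^\kappa\tfrac K2\lambda_{\mathfrak w}^p<c_a6^\kappa\tfrac K2\Gamma^{-p}\lambda_{\mathfrak z}^p$. This closes only if $\Gamma^p\gtrsim c_a6^\kappa K$. The given $\Gamma$ guarantees only $\Gamma^q\gtrsim K$, so for $p<q$ it must be enlarged, for instance by using exponent $1/p$ instead of $1/q$.

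\textbf{$\mathcal Q(\mathfrak w)$ $(p,q)$-intrinsic.} Here your device points the wrong way. The bound $\rho_{p,q;\mathfrak w}^\kappa\le a(\mathfrak w)/\widetilde M$ only gives $a(\mathfrak w)\le c_a a(\mathfrak z)+c_a6^\kappa a(\mathfrak w)/\widetilde M$. This can be absorbed only if $\widetilde M>c_a6^\kappa$, whereas only $\widetilde M\ge 1$ is assumed. The subcase cannot be skipped: the $G$--$G$ time inclusion in Lemma~\ref{vitali lemma} uses exactly $\lambda_{\mathfrak z}\le\Gamma\lambda_{\mathfrak w}$ with $\mathcal Q(\mathfrak w)$ $(p,q)$-intrinsic. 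You can close it in either of two ways.
\begin{enumerate}
\item Fix $\widetilde M\ge 2c_a6^\kappa$. This is admissible, since $\widetilde M$ is a free parameter entering $K$ and \texttt{data}.
\item Prove $\rho_{p,q;\mathfrak w}^\kappa\lesssim\lambda_{\mathfrak w}^{p-q}$ by running the proof of Lemma~\ref{lem: lambda rho relation} on $G_{2\rho,\lambda}(\mathfrak w)$. Use the Caccioppoli inequality, the H\"older bound $\operatorname{osc}u\lesssim\rho^{2\gamma/q}$, and $\sup_{G_{2\rho,\lambda}}a\lesssim a(\mathfrak w)$, which follows from $a(\mathfrak w)\ge\widetilde M\rho^\kappa$.
\end{enumerate}
The paper's ``Similarly, we can also show $\lambda_{\mathfrak z}\le\Gamma\lambda_{\mathfrak w}$'' passes over the same point. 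So this is a shared omission, but the symmetry reduction as you state it is not valid.
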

\begin{proof}
We consider case (i) above, i.e., $\mathcal{Q}(\mathfrak z)=Q_{2\rho_{p;\mathfrak z}, \lambda_{\mathfrak z}}(\mathfrak z)$ and $\mathcal{Q}(\mathfrak w)=Q_{2\rho_{p;\mathfrak w}, \lambda_\mathfrak w}(\mathfrak w).$ The other cases can be handled following the similar computations. We shall show that 
\begin{align*}
\lambda_{\mathfrak{w}}\leq \left[4^{\frac{1}{p}}+\left(c_a6^{\kappa+1}\max\{2^{\kappa}, 2\}K\right)^{\frac{1}{q}}\right]\lambda_{\mathfrak{z}}=\Gamma(c_a, p, q, \kappa, K)\lambda_{\mathfrak z}.
\end{align*}
Using the assumption $Q_{2\rho_{p;\mathfrak z}, \lambda_{\mathfrak z}}(\mathfrak z) \cap Q_{2\rho_{p;\mathfrak w}, \lambda_\mathfrak w}(\mathfrak w) \neq \emptyset,$ $ \ell(\mathcal{Q}(\mathfrak z))\leq 2 \ell(\mathcal{Q}(\mathfrak w)),$ and $a \in \mathcal{Z}^{\kappa}(\Omega_T),$ we get
\begin{align}\label{eq: inq in the proof of comaparison of lambda}
    a(\mathfrak z)\leq c_a \left(a(\mathfrak w)+\max\{1, 2^{\kappa-1}\}(2^{\kappa}\rho^{\kappa}_{p;\mathfrak z}+2^{\kappa}\rho^{\kappa}_{p;\mathfrak w})\right)\leq c_a a(\mathfrak w)+c_a 6^{\kappa}\max\{1, 2^{\kappa-1}\}\rho^{\kappa}_{p; \mathfrak w}
\end{align}
for any $\kappa \in (0, \infty).$
In contrast, we assume that $\lambda_{\mathfrak{w}}>\left[4^{\frac{1}{p}}+\left(c_a6^{\kappa+1}\max\{2^{\kappa}, 2\}K\right)^{\frac{1}{q}}\right]\lambda_{\mathfrak{z}}.$ Using this counter assumption and  \eqref{eq: inq in the proof of comaparison of lambda}, we obtain 
\begin{align*}
\Lambda=\lambda^p_{\mathfrak{z}}+a(\mathfrak{z})\lambda^q_{\mathfrak{z}} &\leq \lambda^p_{\mathfrak z} + \left(c_a a(\mathfrak w)+c_a 6^{\kappa}\max\{2^{\kappa-1}, 1\}\rho^{\kappa}_{p; \mathfrak w}\right)\lambda^q_{\mathfrak z} \\
&<\frac{1}{4}\lambda^p_{\mathfrak w}+\frac{a(\mathfrak w)\lambda^q_{\mathfrak w}}{\max6^{\kappa+1}\{2^{\kappa}, 2\}K}+ \frac{c_a6^{\kappa}\max\{2^{\kappa-1}, 1\}}{c_a 6^{\kappa+1}\max\{2^{\kappa}, 2\}K} \rho^{\kappa}_{p; \mathfrak w} \lambda^q_{\mathfrak w}\\
&\leq \frac{1}{4}\lambda^p_{\mathfrak w}+\frac{1}{2}a(\mathfrak w)\lambda^q_{\mathfrak w}+ \frac{1}{12K}\frac{K}{2}\lambda^p_{\mathfrak w}\\
&<\left(\frac{1}{4}+\frac{1}{24}\right)\lambda^p_{\mathfrak w}+\frac{1}{2}a(\mathfrak w)\lambda^q_{\mathfrak w}\\
&< \frac{1}{2}\left(\lambda^p_{\mathfrak w}+a(\mathfrak w)\lambda^q_{\mathfrak w}\right)=\frac{1}{2}\Lambda,
\end{align*}
which is a contradiction.

Similarly, we can also show $\lambda_{\mathfrak z}\leq \Gamma \lambda_{\mathfrak w}$ following the above computation. This completes the proof.
\end{proof}

\begin{proof}[Proof of Lemma \ref{vitali lemma}] As mentioned before, it remains to find the  Vitali constant $c_v$ such that $\mathcal{Q}(\mathfrak z)\subset c_v \mathcal{Q}(\mathfrak w)$ considering the different cases (i)-(iv) described above. We begin with the space inclusion.  We only show case when $\mathcal{Q}(\mathfrak z)=Q_{2\rho_{\mathfrak z}, \lambda_{\mathfrak z}}(\mathfrak z),$ and $\mathcal{Q}(\mathfrak w)=Q_{2\rho_{\mathfrak w}, \lambda_\mathfrak w}(\mathfrak w),$ and the other cases follow similarly. Let $\mathfrak{z}=(x_{\mathfrak{z}}, t_{\mathfrak{z}})$ and $\mathfrak{w}=(x_{\mathfrak{w}}, t_{\mathfrak{w}})$ and we have $B_{2\rho_{\mathfrak{z}}}(x_{\mathfrak{z}})\cap B_{2\rho_{\mathfrak{w}}}(x_{\mathfrak{w}})\neq \emptyset.$ For $x \in B_{2\rho_{\mathfrak{z}}}(x_{\mathfrak{z}}),$ we have
\begin{align*}
    |x-x_{\mathfrak{w}}|\leq |x-x_{\mathfrak{z}}|+|x_{\mathfrak{z}}-x_{\mathfrak{w}}|\leq 2\rho_{\mathfrak{z}}+2\rho_{\mathfrak{z}}+2\rho_{\mathfrak{w}}\overset{\eqref{radius compare}}{\leq} 10\rho_{\mathfrak{w}}\leq c_v (2\rho_{\mathfrak{w}}). 
\end{align*}
Choosing $c_{v}\geq 5,$ we have the conclusion that $x\in B_{c_{\nu}(2\rho_{\mathfrak{w}})}(x_{\mathfrak w}).$

Next we show the time inclusion part. In the first case, we have
\begin{align*}
    I_{2\rho_{\mathfrak{z}}, \lambda_{\mathfrak{z}}}(t_{\mathfrak{z}})\cap I_{2\rho_{\mathfrak{w}}, \lambda_{\mathfrak{w}}}(t_{\mathfrak{w}})\neq \emptyset.
\end{align*}
Let $t \in I_{2\rho_{\mathfrak z}, \lambda_{\mathfrak z}}(t_{\mathfrak z})$ and we obtain
\begin{align*}
 |t-t_{\mathfrak{w}}|\leq |t-t_{\mathfrak{z}}|+|t_{\mathfrak{z}}-t_{\mathfrak{w}}|&\leq \frac{2\lambda^{2}_{\mathfrak{z}}(2\rho_{\mathfrak{z}})^2}{\lambda^p_{\mathfrak z}}+\frac{\lambda^{2}_{\mathfrak{w}}(2\rho_{\mathfrak{w}})^2}{\lambda^p_{\mathfrak w}}\\ &\leq \Gamma^{p-2}32 \rho^2_{\mathfrak w}\lambda^{2-p}_{\mathfrak w}+\lambda^{2-p}_{\mathfrak w}(2 \rho_{\mathfrak w})^2\leq \lambda^{2-p}_{\mathfrak w}(c_v 2 \rho_{\mathfrak w})^2
\end{align*}
provided $c_v \geq \sqrt{8\Gamma^{p-2}+1}.$

When $I_{2\rho_{\mathfrak z}, \lambda_{\mathfrak z}}(t_{\mathfrak z})\cap J_{2 \rho_{\mathfrak w}, \lambda_{\mathfrak w}}(t_{\mathfrak w})\neq \emptyset,$ and $t \in I_{2\rho_{\mathfrak z}, \lambda_{\mathfrak z}}(t_{\mathfrak z}),$ we have
\begin{align} \label{eq: 5.23}
|t-t_{\mathfrak{w}}|\leq |t-t_{\mathfrak{z}}|+|t_{\mathfrak{z}}-t_{\mathfrak{w}}|\leq 2 \lambda^{2-p}_{\mathfrak z}(2\rho_{\mathfrak z})^2+\frac{\la^2_{\mathfrak w}}{\La}(2\rho_{\mathfrak w})^2.
\end{align}
On the other hand, we note that
\begin{align*}
    \lambda^p_{\fw}+a(\fw)\lambda^q_{\fw} \leq \lambda^p_{\fw}+c_a\left(\inf_{Q_{2\rho_{\fz}, \lambda_{\fw}}(\fz)}a(z)+\rho^{\kappa}\right)\lambda^q_{\fw}\leq \lambda^p_{\fw}+ c_a K^2 \lambda^{p-q}_{\fz}\lambda^q_{\fw}\leq \left(\Gamma^p+c_aK^2 \Gamma^q\right)\lambda^p_{\fz}
\end{align*}
Using the above estimate in \eqref{eq: 5.23}, we get
\begin{align*}
|t-t_{\mathfrak{w}}| \leq \left[8\Gamma^2(\Gamma^p+c_aK^2 \Gamma^q)+1\right]\frac{\lambda^2_{\fw}}{\La}(2\rho_{\fw})^2 \leq \frac{\lambda^2_{\fw}}{\La}(c_v 2\rho_{\fw})^2    
\end{align*}
provided $c_v\geq \sqrt{8\Gamma^2(\Gamma^p+c_aK^2 \Gamma^q)+1}.$

When $J_{2\rho_{\mathfrak z}, \lambda_{\mathfrak z}}(t_{\mathfrak z})\cap I_{2 \rho_{\mathfrak w}, \lambda_{\mathfrak w}}(t_{\mathfrak w})\neq \emptyset,$ and $t \in J_{2\rho_{\mathfrak z}, \lambda_{\mathfrak z}}(t_{\mathfrak z}),$ we have
\begin{align*}
|t-t_{\mathfrak{w}}|\leq |t-t_{\mathfrak{z}}|+|t_{\mathfrak{z}}-t_{\mathfrak{w}}|\leq 2 \frac{\lambda^2_{\fz}}{\La}(2\rho_{\mathfrak z})^2+\la^{2-p}_{\mathfrak w}(2\rho_{\mathfrak w})^2 \leq (8\Gamma^{p-2}+1)\lambda^{2-p}_{\fw}(2\rho_{\fw})^2\leq \lambda^{2-p}_{\fw}(c_v 2\rho_{\fw})^2
\end{align*}
provided $c_v \geq \sqrt{8\Gamma^{p-2}+1}.$

Finally, when $J_{2\rho_{\mathfrak z}, \lambda_{\mathfrak z}}(t_{\mathfrak z})\cap J_{2 \rho_{\mathfrak w}, \lambda_{\mathfrak w}}(t_{\mathfrak w})\neq \emptyset,$ and $t \in J_{2\rho_{\mathfrak z}, \lambda_{\mathfrak z}}(t_{\mathfrak z}),$ we have
\begin{align*}
|t-t_{\mathfrak{w}}|\leq |t-t_{\mathfrak{z}}|+|t_{\mathfrak{z}}-t_{\mathfrak{w}}|\leq 2 \frac{\lambda^2_{\fz}}{\La}(2\rho_{\mathfrak z})^2+\frac{\la^{2}_{\mathfrak w}}{\La}(2\rho_{\mathfrak w})^2 \leq (8\Gamma^2+1)\frac{\lambda^2_{\fw}}{\La}(2\rho_{\fw})^2\leq \frac{\lambda^2_{\fw}}{\La}(c_v2\rho_{\fw})^2
\end{align*}
provided $c_v \geq \sqrt{8\Gamma^2+1}.$

Now we choose $c_v = 16\Gamma \sqrt{(\Gamma^p+c_aK^2 \Gamma^q)+1}$ and this completes the proof.
\end{proof}
\subsection{Proof of Theorem \ref{main theorem} (completed)} Now we are ready to complete the proof of Theorem \ref{main theorem}. We begin with a result that is a consequence of reverse H\"{o}lder inequalities. There exist constants $c (\texttt{data})\geq 1$ and $\theta_0\in (0, 1)$ such that for all $\theta \in (\theta_0, 1)$ we have
\begin{align*}
\iint_{Q_{2c_v\rho, \lambda}(z_0)}H(z, |Du|)\, dz &\leq c\Lambda^{1-\theta}\iint_{Q_{2\rho, \lambda}(z_0) \cap E\left(\frac{\Lambda}{c}\right)} H(z, |D u|)^{\theta}\, dz
\end{align*}   
and 
\begin{align*}
  \iint_{G_{2c_v\rho, \lambda}(z_0)}H(z, |Du|)\, dz &\leq c\Lambda^{1-\theta}\iint_{G_{2\rho, \lambda}(z_0) \cap E\left(\frac{\Lambda}{c}\right)} H(z, |D u
  |)^{\theta}\, dz.
\end{align*}
Using Lemma \ref{vitali lemma}, we get a countable pairwise disjoint collection $\mathcal{I}:=\left\{\mathcal{Q}({z_j})\right\}_{j=1}^{\infty}$ for $z_j \in E\left({\Lambda, r_1}\right).$ From the above estimates, there exist constants $c \geq 1$ and $\theta_0\in (0,1)$ such that
\begin{align*}
    \iint_{c_v \mathcal{Q}(z_j)} H(z, |D u|)\, dz \leq c\Lambda^{1-\theta}\iint_{\mathcal{Q}(z_j) \cap E\left({\frac{\Lambda}{c}}\right)}H(z, |Du|)^{\theta}\, dz
\end{align*}
holds for every $j \in \NN$ and $\theta \in (\theta_0, 1).$ Now, following the usual procedure (e.g.,\cite[Section 5]{2023_Gradient_Higher_Integrability_for_Degenerate_Parabolic_Double-Phase_Systems}) and using the Fubini theorem, we arrive at the following
\begin{align*}
    \miint{Q_r(z_0)}H(z, |Du|)^{1+\varepsilon}\, dz\leq c \La^{\varepsilon}_0\miint{Q_{2r}(z_0)}H(z, |Du|)\, dz.
\end{align*}
Finally, we use the definitions of $\lambda_0$ and $\Lambda_0$ given in \eqref{defn: la_0} and \eqref{defn: La_0} and estimate \eqref{eq: estimate of H by lambda_0} to obtain
\begin{align*}
  \miint{Q_r(z_0)}H(z, |Du|)^{1+\varepsilon}\, dz\leq c \left(\frac{2+||a||_{\infty}}{r^{q-2\gamma}}\right)^{1+\frac{\varepsilon q}{2}}   
\end{align*}
which completes the proof.

\bigskip 
\noindent\textbf{Acknowledgment.}
Abhrojyoti Sen is supported by postdoctoral research grants from the Alexander von Humboldt Foundation, Germany. The author also thanks Goethe University Frankfurt for its support and research environment.

\bigskip
\noindent\textbf{Conflict of interest.} On behalf of all authors, the corresponding author states that there is no conflict of interest.

\bigskip
\noindent\textbf{Data availability.} This manuscript does not have associated data.

\bibliographystyle{plain}
\bibliography{ref}

\end{document}